\documentclass[11pt]{article}

\usepackage{fullpage}
\usepackage[T1]{fontenc} 
\usepackage{amsmath,amssymb}
\usepackage{amsopn,amsthm}

\usepackage{graphicx} 
\usepackage{color}
\usepackage{psfrag}

\usepackage{tikz}

\usepackage{subcaption}
\usepackage{nicefrac}

\usepackage{svg}
\usepackage{tikz}
\usetikzlibrary{positioning,arrows.meta}
\usepackage{mathrsfs} 
\usepackage{bm}
\usepackage{cases}

\usepackage[colorinlistoftodos]{todonotes}

\usepackage{algpseudocode} 
\usepackage{algorithm}
\floatstyle{plain} \newfloat{myalgo}{tbhp}{mya}

{\begin{myalgo}[#1]
    \centering
    \begin{minipage}{0.9\textwidth}
      \begin{algorithm}[H]}%
      {\end{algorithm}
    \end{minipage}
  \end{myalgo}}

\usepackage[colorlinks=true]{hyperref}
\hypersetup{urlcolor=blue, citecolor=blue, linkcolor=blue}

\usepackage{enumitem}
\newtheorem{theorem}{Theorem}[section]
\newtheorem{proposition}[theorem]{Proposition}
\newtheorem{lemma}[theorem]{Lemma}
\newtheorem{corollary}[theorem]{Corollary}

\theoremstyle{definition}

\newtheorem{example}[theorem]{Example}

\theoremstyle{remark} \newtheorem{remark}[theorem]{Remark}

\numberwithin{equation}{section}
\numberwithin{figure}{section}
\numberwithin{algorithm}{section}

\newcommand{\field}[1]{{\mathbb{#1}}}
\newcommand{\C}{\field{C}}
\newcommand{\N}{\field{N}}
\newcommand{\R}{\field{R}}
\newcommand{\Acal}{\mathcal{A}}
\newcommand{\Bcal}{\mathcal{B}}
\newcommand{\Ccal}{\mathcal{C}}
\newcommand{\Dcal}{\mathcal{D}}

\newcommand{\Fcal}{\mathcal{F}}
\newcommand{\Gcal}{\mathcal{G}}

\newcommand{\Lcal}{\mathcal{L}}

\newcommand{\Pcal}{\mathcal{P}}

\newcommand{\Xcal}{\mathcal{X}}

\newcommand{\Lscr}{\mathscr{L}}
\newcommand{\Dscr}{\mathscr{D}}
\newcommand{\Sscr}{\mathscr{S}}

\newcommand{\loc}{{\mathrm{loc}}}
\newcommand{\ol}[1]{\overline{#1}}

\newcommand{\subs}{\subseteq} 
\newcommand{\trans}{{\top}}

\newcommand{\ds}{\, \dif A} 
\newcommand{\dt}{\, \dif t}
 
\newcommand{\dx}{\, \dif V}

\newcommand{\dtau}{\, \dif \tau}

\newcommand{\ghat}{\widehat g}

\newcommand{\uhat}{\widehat u}
\newcommand{\vhat}{\widehat v} 
\newcommand{\what}{\widehat w}

\newcommand{\zhat}{\widehat{z}}

\newcommand{\Fhat}{\widehat{F}}
\newcommand{\What}{\widehat{W}}

\newcommand{\etahat}{\widehat\eta}
\newcommand{\phihat}{\widehat\phi}

\newcommand{\zetahat}{\widehat\zeta}

\newcommand{\uihat}{\uhat^i}
\newcommand{\ushat}{\uhat^s}
\newcommand{\uthat}{\uhat}
\newcommand{\uinftyhat}{\uhat^\infty}

\newcommand{\vinftyhat}{\vhat^\infty}

\newcommand{\wtilde}{{\widetilde w}}

\newcommand{\Rtilde}{{\widetilde R}}

\newcommand{\rmi}{\mathrm{i}} 

\newcommand{\rme}{\mathrm{e}}

\newcommand{\eps}{\varepsilon}

\newcommand{\Sd}{{S^2}}

\newcommand{\Rd}{{\R^3}}

\renewcommand{\div}{\mathrm{div}}

\DeclareMathOperator{\sgrad}{Grad}

\DeclareMathOperator{\sdiv}{Div}

\newcommand{\BR}{{B_R(0)}}
\newcommand{\BRtilde}{{B_{\Rtilde}(0)}}

\newcommand{\finc}{f^i}
\newcommand{\ui}{u^i}
\newcommand{\us}{u^s}
\newcommand{\ut}{u}
\newcommand{\uinfty}{u^\infty}

\newcommand{\ph}{\,\cdot\,}

\newcommand{\Dbar}{\overline{D}}
\newcommand{\Dplus}{\Rd\setminus\Dbar}

\newcommand{\di}{\partial}
\newcommand{\dit}{\partial_t}

\newcommand{\SL}{\mathrm{SL}}
\newcommand{\DL}{\mathrm{DL}}
\newcommand{\SLop}{S}
\newcommand{\DLop}{K}
\newcommand{\adjDLop}{K'}
\newcommand{\hypSingop}{T}

\newcommand{\Hminhalf}{H^{-\frac12}}
\newcommand{\Hhalf}{H^{\frac12}}
\newcommand{\Hthreehalf}{H^{\frac32}}
\newcommand{\HminhalfdiD}{\Hminhalf(\di D)}
\newcommand{\HhalfdiD}{\Hhalf(\di D)}
\newcommand{\HthreehalfdiD}{\Hthreehalf(\di D)}

\newcommand{\snorm}{{\vert\kern-0.25ex\vert\kern-0.25ex\vert}}
\newcommand{\bigsnorm}{{\big\vert\kern-0.25ex\big\vert\kern-0.25ex\big\vert}}
\newcommand{\Bigsnorm}{{\Big\vert\kern-0.25ex\Big\vert\kern-0.25ex\Big\vert}}

\DeclareMathAlphabet{\mathbi}{\encodingdefault}{\rmdefault}{\bfdefault}{\itdefault}
\DeclareRobustCommand{\vec}[1]{\ifmmode\mathbi{#1}\else\textbf{\textit{#1}}\fi}

\DeclareMathOperator{\dif}{d\!}  
\DeclareMathOperator{\dist}{dist}

\DeclareMathOperator{\supp}{supp}

\DeclareMathOperator{\real}{Re}

\begin{document}

\title{Domain derivative and shape reconstruction for an
  inverse backscattering problem for the wave equation} 
\author{Roland Griesmaier\footnote{Institut f\"ur Angewandte und Numerische
    Mathematik, Karlsruher Institut f\"ur Technologie, Karlsruhe,
    Germany ({\tt roland.griesmaier@kit.edu, eliane.kummer@kit.edu})}\,\! ,
  Marvin Kn\"oller\footnote{Department of Mathematics and Statistics,
    University of Helsinki, Helsinki, Finland
    ({\tt marvin.knoller@helsinki.fi})}\,\! , and
  Eliane Kummer\footnotemark[1]\,\! .
}
\date{\today}

\maketitle

\begin{abstract}
  We consider an inverse backscattering problem for time-dependent
  acoustic waves with a compactly supported penetrable scattering
  obstacle in unbounded three-dimensional free space.
  Assuming that dynamic backscattering far field data of scattered
  waves corresponding to a few time-dependent incident plane waves are 
  available, the goal is to reconstruct the shape of the scattering
  obstacle. 
  We establish Fr\'echet differentiability of the time-dependent far
  field pattern with respect to the shape of the scattering obstacle
  and give a characterization of the associated temporal domain
  derivative in terms of its Laplace transform. 
  This characterization is then utilized in an efficient
  implementation of a regularized Gau{\ss}-Newton method for 
  the inverse backscattering problem using convolution quadrature and 
  a boundary element method. 
  Numerical examples demonstrate potentials and limitations of the
  algorithm. 
\end{abstract}

% Text of article.
{\small\noindent
  Mathematics subject classifications (MSC2020): 35R30, (65N21)
  \\\noindent 
  Keywords: temporal domain derivative, wave equation, inverse scattering, backscattering, shape reconstruction, convolution quadrature
  \\\noindent
  Short title: Time-dependent inverse backscattering
  % \\[1em]\noindent Last modified: \today
}

%%%%%%%%%%%%%%%%%%%%%%%%%%%%%%%%%%%%%%%%%%%%%%%%%%%%%%%%%%%%%%%%%%%%%% 
\section{Introduction}
\label{sec:Introduction}
%%%%%%%%%%%%%%%%%%%%%%%%%%%%%%%%%%%%%%%%%%%%%%%%%%%%%%%%%%%%%%%%%%%%%% 
While numerical reconstruction algorithms for inverse obstacle
scattering problems for time-harmonic acoustic waves have already been
thoroughly investigated (see, e.g., \cite{ColKre19}), corresponding 
algorithms for time-depen\-dent scattering data received somewhat
less attention so far. 
However, in practice it is actually very natural to consider
time-dependent incident waves and to use remote observations of the
associated scattered waves over some finite time interval for
reconstruction. 
In particular dynamic backscattering data can be measured efficiently
using just one source and one receiver at the same observation point. 
Moving this source-receiver combination to different observation 
points around the obstacle, such backscattering data sets can be 
obtained for different incident/observation directions. 
The goal of this work is to establish an efficient numerical
reconstruction scheme for such a dynamic backscattering setup.

We will consider an idealized setting, where a causal incident plane
wave interacts with a compactly supported penetrable scattering
obstacle that is characterized by its constant wave speed and constant
density, both of which may differ from the corresponding parameters of 
the surrounding unbounded three-dimensional free space. 
To model remote observations of the scattered wave in the
backscattering direction, we assume that the time-dependent far field
pattern of the scattered wave is given in the direction opposite to the
direction of propagation of the corresponding incident plane wave for
some finite time interval.
Moreover, we suppose that such a one-dimensional backscattering data 
set is available for a few incident plane waves with different 
incident directions around the obstacle. 

Since the reconstruction method proposed in this work builds heavily on 
ideas from the time-harmonic setting, we briefly review the corresponding 
inverse obstacle scattering problem in the time-harmonic regime to place 
our results in context.
Basically two main classes of reconstruction methods for inverse
obstacle scattering problems have emerged over the years. 
The first class are iterative regularization methods that reformulate
the inverse obstacle scattering problem as a nonlinear shape
optimization problem and then iteratively update the domain of the
unknown scatterer to arrive at a solution.
These updates are usually determined using the domain derivative
associated to the direct scattering problem (see,
e.g.,~\cite{CosLeL12a,CosLeL12b,FinHet24,Het95,Het12,HipLi18,Kir93,Pot94}), 
which requires solving the direct scattering problem at each step 
of the iteration.
We refer, e.g., 
to~\cite{EHHS19,FarTeyDje02,HABH19,HagHet20,HarHog07,IvaLeL16,SRHW25} 
for some more recent contributions on iterative reconstruction schemes 
for time-harmonic waves. 
An iterative scheme for the inverse backscattering problem has 
been considered in~\cite{KreRun98}. 
The second class of methods are qualitative reconstruction schemes,
which are non-iterative and avoid solving direct scattering problems. 
Those usually require less a priori information on the unknown
scattering obstacle but on the other hand use much larger data sets
than iterative reconstruction schemes (see, e.g.,
\cite{CakCol14,CakColHad23,KirGri08}).
Since they typically rely on some sort of sampling strategy for
reconstruction, no parametrization of the unknown scatterer is
required.

In recent years several qualitative reconstruction methods
have been transferred from the frequency domain to the time domain.
For instance linear sampling methods have been proposed
in~\cite{CakMonSel21,CHLM10,HadLecMam14}, factorization methods have 
been considered in~\cite{CakHadLec19,HadLiu20}, and the enclosure
method has been extended in various directions
in~\cite{Ike10,Ike12}.
In this work we instead consider iterative regularization and develop
an efficient reconstruction method that is based on temporal domain derivatives.
We generalize a corresponding analysis and an algorithm
from~\cite{KnoNic26}, where a two-dimensional inverse obstacle
scattering problem with a sound-soft obstacle and time-dependent near
field scattering data corresponding to a single incident plane wave
has been considered.
The main novelties compared to~\cite{KnoNic26} are that we consider a
three-dimensional instead of a two-dimensional scattering problem, 
far field backscattering data instead of near field data 
corresponding to a single incident wave, and a penetrable instead of an impenetrable scattering obstacle. 
We develop a rigorous characterization of the temporal domain
derivative for the associated direct scattering problem in terms of
its Laplace transform. 
To this end, we integrate the variational analysis of transient scattering problems via the Laplace transform, as introduced in \cite{BamHaD86}, with a generalization of the Fr\'echet differentiability and shape derivative results of \cite{Het95,Kir93} to complex frequencies.
The time-domain derivative is subsequently incorporated into an iterative Gauss--Newton-type reconstruction algorithm, enabling an efficient implementation based on convolution quadrature and boundary element methods~\cite{BanLubMel11,Lub94}.
We note that \cite{DonSuZha26, ZhaDonChe26, ZhaDonMa21,ZhaDonMa22} also consider an iterative reconstruction method for a time-dependent inverse scattering problem. In contrast to our approach, these works first apply a convolution quadrature discretization in time and then perform the shape reconstruction sequentially at discrete Laplace domain frequencies, thereby avoiding an analysis in the time domain.

The outline of this paper is as follows.
In Section~\ref{sec:Setting} we introduce the transient acoustic scattering
problem that we are going to consider throughout this article and we
introduce the corresponding far field expansion of the time-dependent scattered wave.
We analyze the direct scattering problem first in the Laplace domain
in Section~\ref{sec:ScatteringProblemLD} and derive bounds on the domain
derivative that are explicit in the Laplace parameter in Section~\ref{sec:DomainDerivativeLD}. 
These results are transferred from the Laplace domain to the time
domain using a Paley-Wiener argument in Section~\ref{sec:TimeDomain},
and Section~\ref{sec:NumericalExamples} is devoted to the numerical
solution of the inverse backscattering problem.
We present numerical results. 
In the appendix we collect some technical estimates that are used
throughout the article.

%%%%%%%%%%%%%%%%%%%%%%%%%%%%%%%%%%%%%%%%%%%%%%%%%%%%%%%%%%%%%%%%%%%%%% 
\section{Problem setting}
\label{sec:Setting}
%%%%%%%%%%%%%%%%%%%%%%%%%%%%%%%%%%%%%%%%%%%%%%%%%%%%%%%%%%%%%%%%%%%%%% 
We consider scattering of a time-dependent acoustic incident plane
wave
\begin{equation}
  \label{eq:PlaneWave}
  \ui(x,t;\theta)
  \,:=\, \finc(t-\theta\cdot x) \,, \qquad (x,t)\in \Rd\times \R \,,
\end{equation}
with direction of propagation $\theta\in\Sd$ and for some compactly
supported $\finc\in H^r_c(\R)$, $r\geq 2$, by a penetrable obstacle
supported in a bounded $C^2$-domain $D\subs\Rd$.
We suppose that the wave speed~$c$ and the density $\rho$ of the
medium are given by the piecewise constant functions 
\begin{equation*}
  c(x) \,:=\,
  \begin{cases}
    c_0 \,, &x\in D \,,\\
    1 \,, & x\in\Dplus \,,
  \end{cases}
  \qquad\text{ and }\qquad
  \rho(x) \,:=\,
  \begin{cases}
    \rho_0 \,, &x\in D \,,\\
    1 \,, & x\in\Dplus \,,
  \end{cases}
\end{equation*}
where $c_0,\rho_0 > 0$ are the material parameters inside the
scatterer. 
Throughout we denote by $\BR$ a sufficiently large ball around the
origin containing $D$ in its interior. 
The incident wave satisfies
\begin{subequations}
  \label{eq:WaveEquationUi}
  \begin{equation}
    \dit^2\ui - \Delta\ui
    \,=\, 0 \qquad \text{in } \Rd\times\R \,,
  \end{equation}
  and assuming that $\ui$ satisfies the causality condition\footnote{In
    particular this means that $\finc(t)=0$ for all $t\in (-\infty,R)$.} 
  \begin{equation}
    \label{eq:CausalityUi}
    \ui
    \,=\, \dit\ui \,=\, 0 \,, \qquad
    \text{in } \BR \times (-\infty,0) \,, 
  \end{equation}
\end{subequations}
the scattering problem reads 
\begin{subequations}
  \label{eq:WaveEquationU}
  \begin{align}
    \frac{1}{\rho c^2} \dit^2\ut
    - \div\Bigl(\frac{1}{\rho}\nabla\ut\Bigr)
    &\,=\, 0
    &&\text{in } \Rd\times(0,\infty) \,, \\
    \ut(\ph,0) \,=\, \ui(\ph,0) \,,\;
    \dit\ut(\ph,0) &\,=\, \dit\ui(\ph,0)
    &&\text{in } \Rd \,.
  \end{align}
\end{subequations}
As usual we call $\ut$ the total wave, and the scattered wave $\us$ is 
defined as 
\begin{equation*}
  \us(x,t) \,:=\, \ut(x,t) - \ui(x,t) \,, \qquad
  (x,t)\in\Rd\times(0,\infty) \,.
\end{equation*}
Accordingly, 
\begin{subequations}
  \label{eq:WaveEquationUs}
  \begin{equation}
    \label{eq:WaveEquationUsSourceProblem}
    \frac{1}{\rho c^2} \di_t^2 \us
    - \div\Bigl(\frac{1}{\rho}\nabla\us\Bigr)
    \,=\,  \Bigr( 1-\frac{1}{\rho c^2}\Bigr) \di_t^2 \ui
    - \div \Bigl( \Bigl( 1-\frac{1}{\rho}\Bigr)
    \nabla\ui \Bigr) 
    \qquad \text{in } \Rd\times(0,\infty) \,,
  \end{equation}
  and we note that \eqref{eq:CausalityUi} implies that $\us$ is
  causal, i.e.,
  \begin{equation}
    \us(\ph,0) \,=\, \dit\us(\ph,0) \,=\, 0 \qquad \text{in } \Rd \,.
  \end{equation}
\end{subequations}
Our regularity assumptions on $f^i$ imply that 
$\ui|_\BR \in L^2(\R,H^2(\BR))\cap H^1(\R,H^1(\BR))$, and~$\ui|_\BR$ 
is compactly supported with respect to time.
In particular, the right hand side 
of~\eqref{eq:WaveEquationUsSourceProblem} belongs 
to~$L^2(\R;L^2(\BR))$.
It is well-known (as, e.g., outlined in~\cite{KirRie14}) that the
scattering problem~\eqref{eq:WaveEquationUs} possesses a unique
weak solution 
$\us\in C([0,\infty),H^1(\Rd))\cap C^1([0,\infty),L^2(\Rd))$
satisfying 
\begin{multline*}
  \int_0^\infty \int_\Rd \Bigl(
  \frac{1}{\rho} \nabla\us(t)\cdot\nabla \phi(t)
  - \frac{1}{\rho c^2} \di_t\us(t) \di_t \phi(t) 
  \Bigr) \dx \dt \\
  \,=\, \int_0^\infty \int_D \biggl(
  \Bigl(1-\frac{1}{\rho_0}\Bigr) \nabla\ui(t)\cdot\nabla \phi(t)
  - \Bigl(1-\frac{1}{\rho_0 c_0^2}\Bigr) \di_t\ui(t) \di_t \phi(t) 
  \biggr) \dx \dt 
\end{multline*}
for all $\phi \in C^\infty_0([0,\infty),H^1_c(\Rd))$.  
Since the coefficients $c$ and $\rho$ are piecewise constant,
\eqref{eq:WaveEquationU} can equivalently also be written as a
transmission problem for $(\ut|_D,\us|_{\Dplus})$.
As usual, we use in the following the notation $D^-:=D$ for the
interior domain and $D^+:=\Rd\setminus\ol{D}$ for the complementary
exterior domain.  We denote the one-sided trace for $D^\pm$ by 
$\gamma^\pm$, and the normal derivative for $D^\pm$ by $\di_\nu^\pm$.
Therewith, the transmission problem is given by 
\begin{subequations}
  \label{eq:TransmissionProblemTD}
  \begin{align}
    \di_t^2\us - \Delta\us
    &\,=\, 0  
    &&\text{in } D^+\times(0,\infty) \,, \\
    \frac{1}{c_0^2} \di_t^2 \ut - \Delta\ut 
    &\,=\, 0  
    &&\text{in } D^-\times(0,\infty) \,, \\
    \gamma^+\us - \gamma^-\ut
    &\,=\, -\gamma\ui \,,
    &&\text{on } \di D\times(0,\infty) \,,\\ 
    \di_\nu^+\us
    - \frac{1}{\rho_0} \di_\nu^-\ut
    &\,=\, -\di_\nu\ui \,,
    &&\text{on } \di D\times(0,\infty) \,,
  \end{align}
\end{subequations}
with homogeneous initial conditions
$\us(\ph,0) = \dit\us(\ph,0) = 0$ in~$D^+$
and $\ut(\ph,0) = \dit\ut(\ph,0) = 0$ in~$D^-$.

Following \cite{Fri62,Fri64,Fri67}, we are now in a position to
introduce the definition of the far field pattern of the scattered
wave. 
Throughout, we employ, for any Hilbert space $X$ and $r\in\R$, 
the notations
\begin{align*}
    H^r_c(\R; X) 
    &\,:=\, \bigl\{ g \in H^r(\R; X)  
    \;\big|\; \supp g \subs \R \text{ is compact} \bigr\} \,, \\
    H^r_0(0,T;X) 
    &\,:=\, \bigl\{ g|_{(0,T)} 
    \;\big|\; g \in H^r(\R; X) \,,\; 
    g = 0 \text{ in } (-\infty, 0) \bigr\} \,, \\
    H^r_{0,\loc}(0,\infty;X) 
    &\,:=\, \bigl\{ g 
    \;\big|\; g \in H^r_0(0,T; X) \text{ for all } T>0 \bigr\} \,. 
\end{align*}

\begin{lemma}
  \label{lmm:FarfieldExpansion}
  Suppose that $\ui$ is an incident plane wave as 
  in~\eqref{eq:PlaneWave} with $f^i \in H^r_c(\R)$ for some $r>5$, which
  satisfies~\eqref{eq:CausalityUi}. 
  Then the associated scattered field
  $\us|_{D^+} \in
  H^{r-\frac52}_{0,\loc}(0,\infty;H^1(D^+))$ in the exterior of the
  scatterer has the asymptotic behavior 
  \begin{equation*}
    \us(x,|x|+t) 
    \,=\, \frac{1}{|x|} \Bigl( \uinfty(\xi,t) + O(|x|^{-1}) \Bigr) \,, \qquad
    |x| \to \infty \,,
  \end{equation*}
  uniformly in all directions $\xi=x/|x|\in\Sd$ and also in $t\geq 0$
  if $t$ is confined to a finite interval. 
  We call $\uinfty:\Sd\times (0,\infty) \to \R$ the far field pattern
  of $\us$, and we have 
  \begin{equation}
    \label{eq:FarfieldTD}
    \uinfty(\xi,t)
    \,=\, \frac{1}{4\pi} \int_{\di D} \Bigl(
    (\nu_y\cdot\xi) \di_t\us(y,t+\xi\cdot y) 
    - \di_{\nu_y}^+ \us(y,t+\xi\cdot y) 
    \Bigr) \ds_y \,,
    \qquad \xi\in\Sd \,,\; t>0 \,.
  \end{equation}
\end{lemma}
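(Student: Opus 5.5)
We will prove the lemma in three steps: first the time regularity of $\us$ via the Laplace domain, then a Kirchhoff-type representation of $\us$ in the exterior, and finally an asymptotic expansion of that representation.

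\textbf{Step 1: regularity.} We would obtain $\us|_{D^+}\in H^{r-\frac52}_{0,\loc}(0,\infty;H^1(D^+))$ from the Laplace-domain theory of~\cite{BamHaD86}. For $\real s=\sigma>0$, the Laplace transform $\uhat^s(\ph,s)$ solves a transmission problem for $\Delta-s^2$ with data $\widehat{\ui}$. We expect the coercive variational formulation to yield a bound of the form $\|\uhat^s(\ph,s)\|_{H^1}\le C(\sigma)|s|^{\mu}\,|\widehat{\finc}(s)|$ for some power $\mu$; we guess $\mu=\frac52$ after accounting for the $s$-growth of $\widehat{\ui}$ in $H^1(\BR)$. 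Since $\finc\in H^r_c$ and is causal, a Paley--Wiener argument (Lubich's operational calculus) then gives causality of $\us$ and the loss of $\frac52$ derivatives in time.

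\textbf{Step 2: Kirchhoff representation.} For $x\in D^+$, Green's representation formula for $\Delta-s^2$ in $D^+$ gives
\[
\uhat^s(x,s)=\int_{\di D}\Bigl(\di_{\nu_y}\Phi_s(x,y)\,\gamma^+\uhat^s(y,s)-\Phi_s(x,y)\,\di_\nu^+\uhat^s(y,s)\Bigr)\ds_y ,
\qquad \Phi_s(x,y)=\frac{\rme^{-s|x-y|}}{4\pi|x-y|}.
\]
Transforming back, multiplication by $\rme^{-s|x-y|}$ becomes a time shift by $|x-y|$. This yields the retarded-potential (Kirchhoff) formula
\[
\us(x,t)=\frac1{4\pi}\int_{\di D}\Bigl(-\frac{\di_t\us(y,t-|x-y|)}{|x-y|}\,\di_{\nu_y}|x-y|+\us(y,t-|x-y|)\,\di_{\nu_y}\frac1{|x-y|}-\frac{\di_\nu^+\us(y,t-|x-y|)}{|x-y|}\Bigr)\ds_y .
\]
This step is valid provided the traces are regular enough in time. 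With $\frac52$ derivatives lost and $r>5$, we have $\us\in H^{r-\frac52}$ with $r-\frac52>\frac52$, so $\di_t\us$ has enough smoothness to be continuous in $t$ and its traces are meaningful.

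\textbf{Step 3: asymptotics.} Substitute $t\mapsto |x|+t$ and use the uniform expansions for $y$ in the compact set $\di D$:
\[
|x-y|=|x|-\xi\cdot y+O(|x|^{-1}),\qquad \di_{\nu_y}|x-y|=-\nu_y\cdot\xi+O(|x|^{-1}),\qquad \di_{\nu_y}|x-y|^{-1}=O(|x|^{-2}).
\]
The retarded time therefore becomes $t+\xi\cdot y+O(|x|^{-1})$. A mean value argument in time bounds the resulting error by $\|\di_t^2\us\|_{L^\infty}$ and $\|\di_t\di_\nu\us\|_{L^\infty}$ over bounded time windows. Collecting the $1/|x|$ terms gives exactly~\eqref{eq:FarfieldTD}, with remainder $O(|x|^{-2})$ uniformly in $\xi$ and in $t$ on bounded intervals.

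\textbf{Main obstacle.} The hard part is the regularity bookkeeping. We need traces $\gamma^+\di_t^k\us$ and $\di_\nu^+\di_t^k\us$, for $k\le 2$ and $k\le1$ respectively, to be continuous in $t$ with values in $L^1(\di D)$, so that the error estimates and the uniformity claims hold. We would handle this by proving the Laplace-domain bound for $\gamma^+\uhat^s$ in $\Hhalf(\di D)$ and for $\di_\nu^+\uhat^s$ in $\Hminhalf(\di D)$, the latter via the variational normal derivative. The additional powers of $|s|$ must then be absorbed by the assumption $r>5$, using the Sobolev embedding $H^{k+\frac12+\epsilon}(0,T)\subset C^k$. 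If the $\Hminhalf$ regularity of the normal derivative is insufficient, we would move the integration surface to the sphere $\di\BR$, where interior elliptic regularity makes $\us$ smooth in space, and then return to $\di D$ via Green's theorem in $\BR\setminus\Dbar$.
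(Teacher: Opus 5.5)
Your proposal is correct and follows essentially the same route as the paper. The three steps match: time regularity of $\us$ from the Laplace-domain estimates (the paper simply invokes Theorem~\ref{thm:WellPosednessScatterinProblemTD} and Sobolev embedding to get $C^2$ in time); Kirchhoff's retarded-potential representation, which the paper cites while you derive it by inverting the Laplace-domain Green formula \eqref{eq:v+repGreen}; and a Taylor/mean-value expansion based on the uniform asymptotics of $|x-y|$, $|x-y|^{-1}$ and their normal derivatives. Your treatment of the trace regularity of $\di_\nu^+\us$, including the fallback of moving the integration surface to a sphere and back via Green's theorem, addresses a point the paper's proof passes over; when carrying it out, note that evaluating along the $y$-dependent retarded times $t+\xi\cdot y$ requires bounds of the type $L^1(\di D;C([0,T]))$ rather than continuity in $t$ with values in $L^1(\di D)$.
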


\begin{proof}
  We will see in Theorem~\ref{thm:WellPosednessScatterinProblemTD}
  below that $\ui$ as in~\eqref{eq:PlaneWave} with $f^i \in H^r_c(\R)$
  for some~$r>5$ satisfying~\eqref{eq:CausalityUi} implies 
  that~$\us \in H^{r-\frac52}_0(0,T;H^1(D^+))$ in the exterior 
  of the scatterer for all~$T>0$. 
  By Sobolev embedding this means that $\us$ is twice continuously
  differentiable with respect to the time variable. 
  Using Kirchhoff's representation formula (see, e.g.,
  \cite[p.~97]{BanSay22} or \cite[p.~147]{Mar21}) the scattered
  field~$\us(x,t)$ with $(x,t) \in D^+\times(0,\infty)$ in
  the exterior of the scatterer can be written as the sum of a
  transient double and single layer potential, 
  \begin{equation}
    \label{eq:KirchhoffExtUs}
    \begin{split}
      \us(x,t) 
      &\,=\, \int_{\di D} \biggl(
      \di_{\nu_y}^+\Bigl( \frac{\us(z, t-|x-y|)}{4\pi|x-y|} \Bigr)\bigg|_{z=y} 
      - \frac{1}{4\pi|x-y|} \di_{\nu_y}^+\us(y,t-|x-y|)  
      \biggr) \ds_y \\
      &\,=\, \frac{1}{4\pi} \int_{\di D} \biggl(
      \us(y,t-|x-y|) \di_{\nu_y}^+ \frac{1}{|x-y|}
      - \frac{1}{|x-y|} \di_t\us(y,t-|x-y|) \di_{\nu_y}|x-y| \\
      &\phantom{\,=\, \frac{1}{4\pi} \int_{\di D} \biggl(} 
      - \frac{1}{|x-y|} \di_{\nu_y}^+\us(y,t-|x-y|) \biggr) \ds_y \,.
    \end{split}
  \end{equation}
  Observing that 
  \begin{align*}
    |x-y| 
    &\,=\, |x|-\xi\cdot y + O(|x|^{-1}) \,,
    & |x-y|^{-1}
    &\,=\, |x|^{-1} + O(|x|^{-2}) \,, \\
    \di_{\nu_y}|x-y|
    &\,=\, -\nu_y \cdot \xi + O(|x|^{-1}) \,,
    & \di_{\nu_y}( |x-y|^{-1} )
    &\,=\, \nu_y \cdot  |x|^{-2} \xi + O(|x|^{-3}) \,,
  \end{align*}
  uniformly for all $y\in\di D$, and inserting this into
  \eqref{eq:KirchhoffExtUs}, the result follows from Taylor's theorem.   
\end{proof}

We are interested in the following inverse backscattering problem.
Suppose we are given the far field patterns $\uinfty(-\theta_k,t;\theta_k)$
corresponding to incident plane waves $\ui(\ph,\ph;\theta_k)$
with direction of propagation $\theta_k$ for a finite set of
incident directions $\theta_k\in\Sd$ and on a finite time 
interval~$t\in [0,T]$, can we numerically reconstruct the shape of the 
scatterer~$D$?

\section{Direct scattering problem in the Laplace domain}
\label{sec:ScatteringProblemLD}
We analyze the scattering problem and derive the associated shape 
derivative first in the Laplace domain. 
Following \cite{BanSay22}, we denote by 
\begin{equation*}
  \ghat(s) 
  \,:=\, \Lcal\{g\}(s) 
  \,:=\, \int_{0}^{\infty} \rme^{-st} g(t) \dt \,, \qquad
  s\in\C_+ := \{z\in\C \;|\; \real(z)>0 \} \,,
\end{equation*}
the Laplace transform of a causal function $g:\R\to\R$, assuming that
it exists.  
Given any Hilbert space $X$, the Laplace transform can then be
extended to
\begin{equation*}
  \Lscr'_+(\R;X)
  \,:=\, \big\{ g\in \Dscr'_+(\R;X) \;\big|\; 
  \rme^{-\sigma_g t} g(t) \in \Sscr'_+(\R;X)
  \text{ for some } \sigma_g\geq 0 \big\} \,.
\end{equation*}
Here, $\Dscr'_+(\R;X)$ and $\Sscr'_+(\R;X)$ denotes the space of
causal distributions and of causal tempered distributions with values
in $X$, respectively. 
If $g\in\Lscr'_+(\R;X)$, then its Laplace transform $\Lcal\{g\}(s)$ is 
a holomorphic function of $s$ in the half-plane $\real(s)>\sigma_g$;
see, e.g., \cite[p.~417]{Tre75}. 
In particular, the restricted causal incident plane wave
$\ui|_{\BR} \in H^r_c(\R;H^2(\BR)$, $r \geq 2$, from \eqref{eq:PlaneWave}
has a well-defined Laplace transform, which we denote by~$\uihat|_\BR$.

The Laplace transform of the scattering problem
\eqref{eq:WaveEquationUs} gives, for all $s\in\C_+$, the equation
\begin{equation}
  \label{eq:ScatteringProblemLD}
  \div\Bigl(\frac{1}{\rho}\nabla\ushat\Bigr)
  - \frac{s^2}{\rho c^2} \ushat
  \,=\,  \div \Bigl( \Bigl( 1-\frac{1}{\rho}\Bigr)
  \nabla\uihat \Bigr) - s^2 \Bigr( 1-\frac{1}{\rho c^2}\Bigr) \uihat
  \qquad \text{in } \Rd 
\end{equation}
for the Laplace transform of the scattered field denoted by $\ushat$. 
Also this equation has to be understood in weak sense. 
Since the wave speed $c$ and the density $\rho$ are piecewise
constant, \eqref{eq:ScatteringProblemLD} can equivalently be written
as a transmission problem for $(\vhat^-,\vhat^+) = (\uthat|_{D^-},\ushat|_{D^+})$
that is given by
\begin{subequations}
  \label{eq:TransmissionProblemLD}
  \begin{align}
    \Delta\vhat^+ - s^2\vhat^+
    &\,=\, 0  
    &&\text{in } D^+ \,,\\
    \Delta\vhat^- - \frac{s^2}{c_0^2}\vhat^-
    &\,=\, 0  
    &&\text{in } D^- \,,\\
    \gamma^+\vhat^+ - \gamma^-\vhat^-
    &\,=\, \etahat
    &&\text{on } \di D \,,\\
    \di_\nu^+\vhat^+
    - \frac{1}{\rho_0} \di_\nu^-\vhat^-
    &\,=\, \zetahat
    &&\text{on } \di D \, ,
  \end{align}
\end{subequations}
with $\etahat = - \gamma\uihat$ and 
$\zetahat = -\di_\nu\uihat$. 
In the following, we study the transmission problem 
\eqref{eq:TransmissionProblemLD} in a slightly more 
general form by considering general transmission 
functions $\etahat \in \HhalfdiD$ and $\zetahat \in \HminhalfdiD$.

Following \cite{BamHaD86,BanSay22}, we consider for 
each $s\in\C_+$ the rescaled $H^1$-norm defined by
\begin{equation*}
  \snorm \phihat\snorm_{\Rd,s}^2
  \,:=\, \|\nabla \phihat \|_{L^2(\Rd)}^2 + \|s \phihat \|_{L^2(\Rd)}^2 \,,
  \qquad \phihat\in H^1(\Rd) \,.
\end{equation*}
We note that
\begin{equation}
  \label{eq:EquivalenceSnorm}
  \min(1,|s|) \|\phihat\|_{H^1(\Rd)}
  \,\leq\, \snorm \phihat\snorm_{\Rd,s}
  \,\leq\, \max(1,|s|) \|\phihat\|_{H^1(\Rd)} \,;
\end{equation}
see, e.g., \cite[p.~86]{BanSay22}.
In the next lemma we establish existence and uniqueness of solutions
to~\eqref{eq:TransmissionProblemLD} and prove a basic energy estimate. 

\begin{lemma}
  \label{lmm:StabilityUshat}
  For any $s\in\C$ with $\real(s)>0$ the transmission problem
  \eqref{eq:TransmissionProblemLD} with arbitrary functions~$\etahat \in \HhalfdiD$ 
  and~$\zetahat \in \HminhalfdiD$ has a unique weak
  solution~${(\vhat^-,\vhat^+) \in H^1(D^-)\times H^1(D^+)}$, which satisfies 
  \begin{equation}
    \label{eq:StabilityUshat}
    \snorm\vhat^-\snorm_{D^-,s} + \snorm\vhat^+\snorm_{D^+,s}
    \,\leq\, C_\sigma \frac{|s|^{\frac32}}{\real(s)} 
    \Bigl( \|\zetahat\|_{\HminhalfdiD}^{2} 
    + \|\etahat\|_{\HhalfdiD}^{2} \Bigr)^{\frac12} \,,
    \qquad \real(s) \geq \sigma > 0 \,.
  \end{equation}
\end{lemma}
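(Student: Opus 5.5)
Our approach is the standard Bamberger--Ha Duong argument: lift the inhomogeneous transmission data and apply a coercivity estimate for the rotated sesquilinear form. The jump~$\etahat$ is handled first. We choose a lifting $\what \in H^1(D^+)$ with $\gamma^+\what = \etahat$, supported in $\BR\setminus\ol D$, and bound it in the rescaled norm. A plain $H^1$ extension only gives $\snorm\what\snorm_{D^+,s}\le \max(1,|s|)\,C\|\etahat\|_{\HhalfdiD}$. To obtain the $|s|^{1/2}$ scaling, we instead take $\what$ as the solution of $\Delta\what - s^2\what = 0$ in $D^+$ with Dirichlet trace $\etahat$, or equivalently use an $s$-dependent cutoff lifting. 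For this choice we expect
\[
\snorm\what\snorm_{D^+,s}\le C_\sigma |s|^{\frac12}\|\etahat\|_{\HhalfdiD}.
\]
This follows from the usual estimate for the Dirichlet-to-Neumann map in the Laplace domain (cf.\ \cite{BamHaD86,BanSay22}).

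Next we set $\vhat := \vhat^-$ in $D^-$ and $\vhat := \vhat^+ - \what$ in $D^+$. Then $\vhat\in H^1(\Rd)$ has no jump across $\di D$. The weak formulation becomes: find $\vhat\in H^1(\Rd)$ such that
\[
a_s(\vhat,\phihat) := \int_{D^+}\bigl(\nabla\vhat\cdot\nabla\ol\phihat + s^2\vhat\ol\phihat\bigr)\dx + \frac{1}{\rho_0}\int_{D^-}\Bigl(\nabla\vhat\cdot\nabla\ol\phihat + \frac{s^2}{c_0^2}\vhat\ol\phihat\Bigr)\dx = \ell(\phihat)
\]
for all $\phihat\in H^1(\Rd)$. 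The right-hand side is
\[
\ell(\phihat) = -\langle\zetahat,\gamma\phihat\rangle - \int_{D^+}\bigl(\nabla\what\cdot\nabla\ol\phihat + s^2\what\ol\phihat\bigr)\dx,
\]
with signs to be fixed carefully from Green's formula.

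The key step is coercivity after rotation. Multiplying by $\ol s$ and taking real parts gives
\[
\real\bigl(\ol s\, a_s(\phihat,\phihat)\bigr) = \real(s)\Bigl(\|\nabla\phihat\|_{D^+}^2+\|s\phihat\|_{D^+}^2 + \tfrac1{\rho_0}\|\nabla\phihat\|_{D^-}^2 + \tfrac{1}{\rho_0c_0^2}\|s\phihat\|_{D^-}^2\Bigr).
\]
This is bounded below by $c\,\real(s)\snorm\phihat\snorm_{\Rd,s}^2$ with $c=\min(1,\rho_0^{-1},\rho_0^{-1}c_0^{-2})$. Lax--Milgram then gives existence and uniqueness in $H^1(\Rd)$ together with
\[
\snorm\vhat\snorm_{\Rd,s}\le \frac{C|s|}{\real(s)}\,\|\ell\|_{*},
\]
where $\|\ell\|_*$ is the dual norm with respect to $\snorm\cdot\snorm_{\Rd,s}$. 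Uniqueness for the original problem follows because the difference of two solutions satisfies the homogeneous problem with $\what=0$.

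It remains to bound $\ell$ in this dual norm. The $\what$ term contributes at most $\max(1,\ldots)\snorm\what\snorm_{D^+,s}$. For the $\zetahat$ term we use the trace inequality in rescaled form,
\[
\|\gamma\phihat\|_{\HhalfdiD}\le C\|\phihat\|_{H^1}\le C\max(1,|\sigma|^{-1})\snorm\phihat\snorm_{\Rd,s}
\]
by \eqref{eq:EquivalenceSnorm} and $|s|\ge\sigma$. This leads to
\[
\|\ell\|_*\le C_\sigma\bigl(\|\zetahat\|_{\HminhalfdiD} + |s|^{\frac12}\|\etahat\|_{\HhalfdiD}\bigr).
\]
Combining, $\snorm\vhat^\pm\snorm\le\snorm\vhat\snorm + \snorm\what\snorm$, and using $|s|\ge\real(s)$ to absorb $\snorm\what\snorm$, we obtain the bound with factor $|s|^{3/2}/\real(s)$. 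Since $|s|^{1/2}\ge\sigma^{1/2}$, the $\zetahat$ term, which only carries $|s|/\real(s)$, is absorbed as well.

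The main obstacle is the $s$-explicit lifting estimate $\snorm\what\snorm\lesssim|s|^{1/2}\|\etahat\|$. If this proves delicate, we would first try the cutoff lifting $\what(x)=\chi(|s|\,\dist(x,\di D))\,E\etahat$ with a harmonic-type extension $E$, and check the scaling $|s|^{1/2}$ by a boundary-layer computation. As a fallback, the crude extension gives the factor $|s|^2/\real(s)$. We then refine this by estimating $\vhat^+$ directly via the exterior Dirichlet problem solution, whose rescaled energy is $\lesssim |s|^{1/2}\|\etahat\|$ up to $\sigma$-dependent constants.
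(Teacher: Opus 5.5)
Your architecture matches the paper's proof: lift $\etahat$ into $D^+$, reduce to a problem in $H^1(\Rd)$, use $\real\bigl(\ol{s}\,a(\phihat,\phihat)\bigr)\geq c\,\real(s)\snorm\phihat\snorm^2$ with Lax--Milgram, and bound $\ell$ in the dual of the $\snorm\cdot\snorm$-norm. All of that is fine. The gap is the lifting, and the choice you commit to fails.

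The decaying solution of $\Delta\what-s^2\what=0$ in $D^+$ with $\gamma^+\what=\etahat$ does \emph{not} satisfy $\snorm\what\snorm_{D^+,s}\leq C_\sigma|s|^{1/2}\|\etahat\|_{\HhalfdiD}$. Take $D$ the unit ball and $\etahat\equiv1$. Then $\what(x)=\rme^{-s(|x|-1)}/|x|$, hence
\[
\snorm\what\snorm_{D^+,s}^2 \,\geq\, \|s\what\|_{L^2(D^+)}^2 \,=\, 4\pi|s|^2\int_1^\infty \rme^{-2\real(s)(r-1)}\,\mathrm{d}r \,=\, \frac{2\pi|s|^2}{\real(s)} \,.
\]
For fixed $\real(s)=\sigma$ this grows like $|s|^2$, not $|s|$. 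In your chain, this term in $\ell$ then picks up the coercivity factor $|s|/\real(s)$. So already in this example you only get a bound of order $|s|^2/\real(s)^{3/2}$, which is too large by the factor $(|s|/\real(s))^{1/2}$.

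The estimate \cite[Lmm.~4.4]{BanSay22} that you appeal to concerns a different problem. The paper lifts with $-\Delta\what+|s|^2\what=0$ in $D^+$, with the modulus squared rather than $s^2$. That operator has real positive coefficients, and its solution minimizes $\snorm\cdot\snorm_{D^+,s}$ among all $H^1(D^+)$-extensions of $\etahat$; this is what gives the $|s|^{1/2}$ scaling. Your closing fallback (estimating $\vhat^+$ via the exterior Dirichlet solution) rests on the same false claim. Your ``equivalently'' is also wrong: the cutoff lifting and the $s^2$-Dirichlet solution scale differently.

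The repair leaves the rest of your proof intact. Either replace $s^2$ by $|s|^2$ in the lifting problem, or use your cutoff lifting, which does work:
\begin{itemize}
\item For $|s|$ large, set $\what=\chi(|s|\dist(\cdot,\di D))\,E\etahat$. Here $\chi=1$ near $0$, $\chi=0$ on $[1,\infty)$, and $E$ is any bounded extension $\HhalfdiD\to H^1(D^+)$; no harmonic structure is needed.
\item Apply $\int_0^\eps|w|^2\,\mathrm{d}t\leq2\eps|w(0)|^2+\eps^2\int_0^\eps|w'|^2\,\mathrm{d}t$ in normal coordinates with $\eps=1/|s|$. This gives $|s|^2\|E\etahat\|_{L^2(\{\dist(\cdot,\di D)<1/|s|\})}^2\lesssim|s|\,\|\etahat\|_{L^2(\di D)}^2+\|E\etahat\|_{H^1(D^+)}^2$.
\item The gradient of the cutoff produces the same quantity, so $\snorm\what\snorm_{D^+,s}^2\lesssim(1+|s|)\|\etahat\|_{\HhalfdiD}^2$.
\item For bounded $|s|\geq\sigma$, a fixed extension suffices.
\end{itemize}
With this lifting, your bound $\|\ell\|_*\leq C_\sigma(\|\zetahat\|_{\HminhalfdiD}+|s|^{1/2}\|\etahat\|_{\HhalfdiD})$ and the final absorption steps go through exactly as you wrote them.
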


\begin{proof}
  The weak formulation we focus on is to find $\zhat \in H^1(\R^3)$ satisfying
  \begin{equation}
    \label{eq:ScatteringProblemzhat}
    a(\zhat, \phihat) 
    \,=\, \ell(\phihat) \qquad \text{for all } \phihat \in H^1(\R^3) \,,
  \end{equation}
  where
  \begin{align*}
    a(\zhat, \phihat) 
    &\,:=\, \frac{1}{\rho_0} \int_{D^-} \nabla \zhat \cdot \ol{\nabla \phihat} 
      + \frac{s^2}{c_0^2} \zhat \, \ol{\phihat} \dx 
      + \int_{D^+} \nabla \zhat \cdot \ol{\nabla \phihat} 
      + s^2 \zhat \, \ol{\phihat} \dx \,, \\
    \ell(\phihat) 
    &\,=\, - \int_{\di D} \zetahat \, \ol{\gamma \phihat}\ds 
      - \biggl( \int_{D^+} \nabla (\Lambda \etahat) \cdot \ol{\nabla \phihat} 
      + s^2 (\Lambda \etahat) \, \ol{\phihat} \dx \biggr) \,,
  \end{align*}
  where $\Lambda \etahat$ is any lifting of $\etahat$ into $D^+$.
  In this proof we pick $\Lambda \etahat = \what$, where $\what$ is the solution to
  \begin{align*}
    -\Delta \what + |s|^2 \what 
    &\,=\, 0 
    && \text{in } D^+ \,, \\
    \gamma^+\what &\,=\, \etahat 
    && \text{on } \di D \,,
  \end{align*}
  which, by \cite[Lmm.~4.4]{BanSay22}, satisfies
  \begin{equation}
    \label{eq:bounds12}
    \snorm \what \snorm_{D^+,s} 
    \,\leq\, C_\sigma |s|^{\frac{1}{2}}\| \etahat \|_{\HhalfdiD} \,,
    \qquad \real(s) \geq \sigma > 0 \,.
  \end{equation}
  Once $\zhat \in H^1(\R^3)$ is shown to be uniquely determined we define
  \begin{equation}
    \label{eq:uhatdecomp}
    (\vhat^-, \vhat^+) 
    \,=\, (\zhat|_{D^-}, \zhat|_{D^+}) + (0, \Lambda \etahat) \,,
  \end{equation}
  which is then the unique weak solution of \eqref{eq:TransmissionProblemLD}.
  Since 
  \begin{equation}
    \label{eq:coerc-a}
    \begin{split}
      |s||a(\zhat, \zhat) | \, &\geq \,
      \real\biggl( \ol{s} 
      \biggl(\frac{1}{\rho_0} \int_{D^-} |\nabla \zhat|^2 + \frac{s^2}{c_0^2} |\zhat|^2  \dx + \int_{D^+}|\nabla \zhat|^2 + s^2 |\zhat|^2 \dx
      \biggr)
      \biggr) \\
      & \geq \, \min\Bigl(\frac{1}{\rho_0}, \frac{1}{\rho_0 c_0^2} \Bigr) \real(s) \Bigl(\snorm\zhat \snorm_{D^-,s}^2 + \snorm \zhat \snorm_{D^+,s}^2\Bigr) \,,
    \end{split}
  \end{equation}
  the sesquilinear form $a$ is coercive for all $s\in\C_+$.
  Hence, existence and uniqueness of a weak solution~$\zhat\in H^1(\Rd)$ of \eqref{eq:ScatteringProblemzhat} follows from
  the lemma of Lax-Milgram.
  Moreover, combining \eqref{eq:coerc-a} with the weak formulation \eqref{eq:ScatteringProblemzhat} for $\phihat = \zhat$, we obtain using
  the Cauchy-Schwarz inequality together with the bound \eqref{eq:bounds12} that 
  \begin{equation*}
    \begin{split}
      \Bigl(\snorm\zhat \snorm_{D^-,s} + \snorm \zhat \snorm_{D^+,s}\Bigr)^2
      &\,\leq\, C \frac{|s|}{\real (s)} |\ell(\zhat)| \\
      &\,\leq\, C \frac{|s|}{\real (s)} 
      \Bigl( \| \zetahat \|_{\HminhalfdiD}
      \| \gamma\zhat\|_{\HhalfdiD} 
      + \snorm \Lambda \etahat \snorm_{D^+,s}
      \snorm \zhat \snorm_{D^+,s}\Bigr) \\
      &\,\leq\, C \frac{|s|^{\frac{3}{2}}}{\real(s)}
      \Bigl(\| \zetahat \|_{\HminhalfdiD} 
      + \| \etahat\|_{\HhalfdiD}\Bigr)
      \Bigl( \snorm \zhat \snorm_{D^-,s} 
        + \snorm \zhat \snorm_{D^+,s} \Bigr) \,.
    \end{split}
  \end{equation*}

  Now we use the representation \eqref{eq:uhatdecomp} together with \eqref{eq:bounds12} once more to see that
  \begin{equation*}
    \begin{split}
      \snorm\vhat^-\snorm_{D^-,s} + \snorm\vhat^+\snorm_{D^+,s}
      &\,\leq\,
      \snorm\zhat\snorm_{D^-,s}
      + \snorm\zhat\snorm_{D^+,s}
      + \snorm \Lambda \etahat \snorm_{D^+,s} \\
      &\,\leq\, C_\sigma \frac{|s|^{\frac{3}{2}}}{\real (s)}
      \Bigl(\| \zetahat \|_{\HminhalfdiD}^{2} 
      + \| \etahat\|_{\HhalfdiD}^{2} \Bigr)^{\frac12} \,,
      \qquad \real(s) \geq \sigma > 0 \,.
    \end{split}
  \end{equation*}
\end{proof}

To establish an explicit representation of the solution
of the transmission problem~\eqref{eq:TransmissionProblemLD} 
we rely on integral equations. 
Accordingly, we recall the definitions of the single layer potential
and the double layer potential for the partial differential operator
$\Pcal := -\Delta + s^2$ with complex $s\in\C_+$,
\begin{subequations}
  \label{eq:DefSLDL}
  \begin{align}
    \SL(s):\; \HminhalfdiD \to H^{1}(\Rd) \,,
    & \qquad (\SL(s)\varphi)(x) 
      \,:=\, \int_{\di D} \varphi(y) \Phi_s(x-y)  \ds_y \,,\\
    \DL(s):\; \HhalfdiD \to H^{1}(D^\pm) \,,
    & \qquad (\DL(s) \psi)(x) 
      \,:=\, \int_{\di D} \psi(y) \di_{\nu_y} \Phi_s(x-y) \ds_y \,.
  \end{align}
\end{subequations}
Here, 
\begin{equation*}
  \Phi_s(x) 
  \,:=\, \frac{\rme^{-s |x|}}{4\pi |x|} \,, \qquad 
  x\in\Rd\setminus\{0\} \,,
\end{equation*}
is the fundamental solution for $\Pcal$ in $\Rd$.
We have the following pointwise estimates for the single and double
layer potential away from $D$. 

\begin{lemma}
  \label{lmm:PointwiseEstSLDLLD}
  For $z\in D^+$ with $\dist(z,\di D)\geq \delta> 0$ and $s\in\C_+$ with
  $\real(s) \geq \sigma > 0$, we have
  \begin{subequations}
    \label{eq:PointwiseEstSLDLLD}
    \begin{align}
      |(\SL(s)\varphi)(z)|
      &\,\leq\, C_{\sigma,\delta} |s|
        \frac{\rme^{-\dist(z,\di D)\real(s)}}{\dist(z,\di D)}
        \|\varphi\|_{\HminhalfdiD} \,,
      &&\varphi\in\HminhalfdiD \,, \label{eq:PointwiseEstSLLD}\\
      |(\DL(s)\psi)(z)|
      &\,\leq\, C_{\sigma,\delta} |s|^{\frac{3}{2}}
        \frac{\rme^{-\dist(z,\di D)\real(s)}}{\dist(z,\di D)}
        \|\psi\|_{\HhalfdiD} \,,
      &&\psi\in\HhalfdiD \,. \label{eq:PointwiseEstDLLD}
    \end{align}
  \end{subequations}
\end{lemma}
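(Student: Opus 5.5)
Since $z$ lies at positive distance from $\di D$, the kernels $y\mapsto\Phi_s(z-y)$ and $y\mapsto\di_{\nu_y}\Phi_s(z-y)$ are smooth on $\di D$. The plan is to bound the potentials by duality,
\begin{equation*}
  |(\SL(s)\varphi)(z)| \,\leq\, \|\Phi_s(z-\ph)\|_{\HhalfdiD}\|\varphi\|_{\HminhalfdiD} \,,
  \qquad
  |(\DL(s)\psi)(z)| \,\leq\, \|\di_{\nu}\Phi_s(z-\ph)\|_{\HminhalfdiD}\|\psi\|_{\HhalfdiD} \,,
\end{equation*}
and then to estimate these kernel norms explicitly in $s$. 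I would not use interpolation between fractional norms directly. Instead I would bound the $\HhalfdiD$ norm by a trace estimate and the $\HminhalfdiD$ norm by the $L^2(\di D)$ norm, which is allowed since $\|\cdot\|_{\HminhalfdiD}\le C\|\cdot\|_{L^2(\di D)}$.

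For the single layer, I would write $\Phi_s(z-\ph)|_{\di D}$ as the trace of a smooth function. Let $d:=\dist(z,\di D)$. Take a cutoff $\chi$ that equals $1$ on a $\delta/4$-neighbourhood of $\di D$ and vanishes on $B_{\delta/2}(z)$, with derivatives bounded in terms of $\delta$ only. The trace theorem on $D^-$ (or on a fixed neighbourhood of $\di D$) then gives $\|\Phi_s(z-\ph)\|_{\HhalfdiD}\le C\|\chi\Phi_s(z-\ph)\|_{H^1}$. On the support of $\chi$ we have $|x-z|\ge d/2$, and we can use the bounds
\begin{equation*}
  |\Phi_s(x)| \,\leq\, \frac{\rme^{-|x|\real(s)}}{4\pi|x|} \,,
  \qquad
  |\nabla\Phi_s(x)| \,\leq\, \Bigl(|s|+\frac1{|x|}\Bigr)\frac{\rme^{-|x|\real(s)}}{4\pi|x|} \,.
\end{equation*}
The main obstacle is getting the full factor $\rme^{-d\real(s)}$ rather than $\rme^{-d\real(s)/2}$. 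To handle this, the cutoff region should be the thin shell near $\di D$, namely points with $\dist(x,\di D)<\eta$. There $|x-z|\ge d-\eta$, so $\rme^{-|x-z|\real(s)}\le \rme^{\eta\real(s)}\rme^{-d\real(s)}$. This produces an unwanted factor $\rme^{\eta\real(s)}$.

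To remove that factor I would avoid cutoffs entirely and estimate the $H^1(\di D)$ norm of the restriction intrinsically, since $\HhalfdiD\hookrightarrow$ is dominated by $H^1(\di D)$ norms. For $y\in\di D$ we have $|z-y|\ge d$ exactly, so the tangential gradient satisfies $|\sgrad_y\Phi_s(z-y)|\le C(|s|+d^{-1})\rme^{-d\real(s)}/d$. With $d\ge\delta$ and $|s|\ge\sigma$, this gives $\|\Phi_s(z-\ph)\|_{H^1(\di D)}\le C_{\sigma,\delta}|s|\rme^{-d\real(s)}/d$ and proves \eqref{eq:PointwiseEstSLLD}.

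For the double layer, the normal derivative $\di_{\nu_y}\Phi_s(z-y)$ is bounded pointwise by $(|s|+d^{-1})\rme^{-d\real(s)}/(4\pi d)$. Its $L^2(\di D)$ norm, and hence its $\HminhalfdiD$ norm, is therefore at most $C_{\sigma,\delta}|s|\rme^{-d\real(s)}/d$. This is even better than the stated $|s|^{3/2}$, which follows from it because $|s|\ge\sigma$ implies $|s|\le\sigma^{-1/2}|s|^{3/2}$. The only care needed throughout is to keep all constants dependent on $\sigma$ and $\delta$ only, using $d^{-1}\le\delta^{-1}$ and $1\le|s|/\sigma$.
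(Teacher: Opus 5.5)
Your proof is correct, but it follows a different route from the paper's.

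\textbf{The paper's route.} The paper does not prove \eqref{eq:PointwiseEstSLLD} itself; it refers to \cite[Lmm.~7]{BanLubMel11}. For \eqref{eq:PointwiseEstDLLD} it uses the same duality step as you. It then bounds $\|\di_\nu\Phi_s(z-\ph)\|_{\HminhalfdiD}$ through the normal-trace estimate \cite[Lmm.~4.5]{BanSay22}, which gives $C\max(1,|s|^{-1/2})|s|^{1/2}\snorm\Phi_s(z-\ph)\snorm_{D,s}$. This is legitimate because $\Phi_s(z-\ph)$ solves $-\Delta w+s^2w=0$ in $D$. Finally it estimates that energy norm pointwise, using $|x-z|\geq\dist(z,\di D)$ for all $x\in\ol{D}$. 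The PDE-based trace lemma is where the extra $|s|^{1/2}$ in the stated $|s|^{3/2}$ comes from.

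\textbf{Your route.} You work intrinsically on $\di D$. The embeddings $H^1(\di D)\hookrightarrow\HhalfdiD$ and $L^2(\di D)\hookrightarrow\HminhalfdiD$, together with the explicit bound $|\nabla\Phi_s(x)|\leq(|s|+|x|^{-1})\rme^{-|x|\real(s)}/(4\pi|x|)$, give both estimates in a fully self-contained way. For the double layer you even get the sharper factor $|s|$. This would not, however, improve Corollary~\ref{cor:LDpwbound}, where the single-layer term dominates.

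\textbf{Trade-off.} The paper's version stays consistent with the energy-norm machinery ($\snorm\ph\snorm_{D,s}$ and \cite[Lmm.~4.5]{BanSay22}) that drives all its other Laplace-domain bounds. Yours is more elementary and sharper.

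\textbf{The cutoff detour.} You can delete it. The loss of the exponential factor you worried about does not occur if you take the trace from $D^-$ with no cutoff at all: $\Phi_s(z-\ph)$ is smooth on $\ol{D}$ and $|x-z|\geq d$ on all of $\ol{D}$. This is exactly what the paper exploits. That paragraph also contains a slip: a cutoff vanishing on $B_{\delta/2}(z)$ only yields $|x-z|\geq\delta/2$ on its support, not $d/2$.
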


\begin{proof}
  The inequality \eqref{eq:PointwiseEstSLLD} can be shown as
  in~\cite[Lmm.~7]{BanLubMel11}. 
  To establish \eqref{eq:PointwiseEstDLLD}, we follow the same strategy and use
  \cite[Lmm.~4.5]{BanSay22} to obtain
  \begin{equation*}
    \begin{split}
      |(\DL(s)\psi)(z)|
      &\,\leq\, \Bigl\|\di_\nu
      \frac{\rme^{-s|z-\cdot|}}{4\pi|z-\cdot|} \Bigr\|_{\HminhalfdiD}
      \|\psi\|_{\HhalfdiD} \\
      &\,\leq\, C \max(1,|s|^{-\frac12}) |s|^{\frac12}
      \Bigsnorm\frac{\rme^{-s|z-\cdot|}}{4\pi|z-\cdot|} \Bigsnorm_{D,s}
      \|\psi\|_{\HhalfdiD} \\
      &\,\leq\, C_{\sigma,\delta} |s|^{\frac32} 
      \frac{\rme^{-\dist(z,\di D)\real(s)}}{\dist(z,\di D)} \|\psi\|_{\HhalfdiD} \,.
    \end{split}
  \end{equation*}
\end{proof}

Using Green's representation theorem (see, e.g.,
\cite[Thm.~6.10]{McL00}), we find that the weak 
solution~$(\vhat^-,\vhat^+)\in H^1(D^-)\times H^1(D^+)$
of~\eqref{eq:TransmissionProblemLD} (or equivalently
of~\eqref{eq:ScatteringProblemLD}) satisfies
\begin{subequations}\label{eq:representationvpvm}
    \begin{align}
  \vhat^+
  &\,=\, - \SL(s)(\di_\nu^+\vhat^+) + \DL(s)(\gamma^+\vhat^+) 
  &&\text{in } D^+ \,, \label{eq:v+repGreen}\\
  \vhat^- 
  &\,=\, \SL(s_0)(\di_\nu^-\vhat^-) - \DL(s_0)(\gamma^-\vhat^-) 
  &&\text{in } D^- \,,
\end{align}
\end{subequations}
where $s_0 := s/c_0$.
Applying the jump relations for the single and double layer potential on
$\di D$ and the associated boundary integral operators 
\begin{align*}
  \SLop(s): \, \HminhalfdiD \to \HhalfdiD \,,
  && (\SLop(s)\varphi)(x) 
  &\,:=\, \int_{\di D} \varphi(y) \Phi_s(x-y) \ds_y \,, \\
  \DLop(s): \, \HhalfdiD \to \HhalfdiD \,,
  && (\DLop(s)\psi)(x) 
  &\,:=\, \int_{\di D} \psi(y) \di_{\nu_y}\Phi_s(x-y) \ds_y \,, \\
  \adjDLop(s): \, \HminhalfdiD \to \HminhalfdiD \,,
  && (\adjDLop(s)\varphi)(x) 
  &\,:=\, \int_{\di D} \varphi(y) \di_{\nu_x}\Phi_s(x-y) \ds_y \,, \\ 
  \hypSingop(s): \, \HhalfdiD \to \HminhalfdiD \,,
  && (\hypSingop(s)\varphi)(x) 
  &\,:=\, \di_{\nu_x} \int_{\di D} \varphi(y) \di_{\nu_y}\Phi_s(x-y) \ds_y \,,
\end{align*}
(see, e.g., \cite[Thm.~6.11]{McL00}) it follows immediately that 
$(\frac1s\di_\nu^+\vhat^+,-\gamma^+\vhat^+)\in \HminhalfdiD\times\HhalfdiD$
satisfies the integral equation 
\begin{equation}
  \label{eq:BoundaryIntegralEqn_ScatteringProblem}
  \begin{bmatrix}
    \rho_0 s \SLop(s_0) + s\SLop(s) 
    & \DLop(s_0)+\DLop(s) \\
    - \adjDLop(s_0) - \adjDLop(s)
    & \frac{1}{\rho_0 s}\hypSingop(s_0)
    + \frac{1}{s}\hypSingop(s) 
  \end{bmatrix}
  \begin{bmatrix}
    \frac{1}{s}\di_\nu^+\vhat^+ \\
    -\gamma^+\vhat^+
  \end{bmatrix}
  \,=\,
  \begin{bmatrix}
    -\rho_0 s \SLop(s_0) & 
    -\DLop(s_0)-\frac{1}{2}I \\
    \adjDLop(s_0) - \frac{1}{2} I & 
    -\frac{1}{\rho_0s} \hypSingop(s_0)
  \end{bmatrix}
  \begin{bmatrix}
    -\frac{1}{s}\zetahat \\
    \etahat
  \end{bmatrix} 
\end{equation}
on $\di D$.

We define $\Xcal:=\HminhalfdiD\times \HhalfdiD$ with the norm given by
\begin{equation*}
  \|(\varphi,\psi)\|_\Xcal^2 
  \,:=\, \|\varphi\|_{\HminhalfdiD}^2 + \|\psi\|_{\HhalfdiD}^2 \,,
  \qquad (\varphi,\psi) \in \Xcal \,,
\end{equation*}
and we write $\Xcal'=\HhalfdiD\times \HminhalfdiD$ for the associated dual
space. 
The corresponding anti-dual form is defined by
\begin{equation*}
  \langle (\zeta,\eta) , (\varphi,\psi) \rangle_{\Xcal',\Xcal}
  \,:=\, \langle \ol{\varphi}, \ol{\zeta} \rangle_{\HminhalfdiD,\HhalfdiD}
  + \langle \eta, \psi \rangle_{\HminhalfdiD,\HhalfdiD}
\end{equation*}
for $\zeta,\varphi \in \HminhalfdiD$ and $\eta,\psi \in \HhalfdiD$.  
In Appendix~\ref{app:A} we show that for any $s\in\C_+$ the boundary
integral operator $\Ccal(s): \Xcal \to \Xcal'$ defined by 
\begin{equation}
  \label{eq:DefOpCcal}
  \Ccal(s) 
  \,:=\, \begin{bmatrix}
    \rho_0 s \SLop(s_0) + s\SLop(s) 
    & \DLop(s_0)+\DLop(s) \\
    - \adjDLop(s_0) - \adjDLop(s)
    & \frac{1}{\rho_0 s}\hypSingop(s_0)
    + \frac{1}{s}\hypSingop(s) 
  \end{bmatrix}
\end{equation}
has a bounded inverse $\Ccal^{-1}(s): \Xcal'\to \Xcal$, which satisfies
\begin{equation}
  \label{eq:EstInverseOpCcal}
  \| \Ccal^{-1}(s) \|_{\Xcal \leftarrow \Xcal'} 
  \,\leq\, C_\sigma \frac{|s|^2}{\real(s)} \,, \qquad 
  \real(s)\geq\sigma>0 \,.
\end{equation}
Moreover, let for any $s\in\C_+$ the boundary integral operator
$\Acal: \Xcal\to \Xcal'$ be defined by
\begin{equation}
  \label{eq:DefOpAcal}
  \Acal(s) 
  \,:=\, \begin{bmatrix}
    -\rho_0 s \SLop(s_0) & 
    -\DLop(s_0)-\frac{1}{2}I \\
    \adjDLop(s_0) - \frac{1}{2} I & 
    -\frac{1}{\rho_0 s} \hypSingop(s_0)
  \end{bmatrix} \,.
\end{equation}
Then we obtain from
\eqref{eq:BoundaryIntegralEqn_ScatteringProblem} that the solution
$(\vhat^-,\vhat^+)\in H^1(D^-)\times H^1(D^+)$ of the
transmission problem~\eqref{eq:TransmissionProblemLD} 
with $\etahat \in \HhalfdiD$ and $\zetahat \in \HminhalfdiD$ can be written as
\begin{subequations}
  \label{eq:ExplRepSolnTransmissionProblemLD}
  \begin{align}
    \vhat^+
    &\,=\, - \begin{bmatrix}
      s\SL(s) & \DL(s)
    \end{bmatrix} \Ccal^{-1}(s) \Acal(s) \begin{bmatrix}
      -\frac{1}{s}\zetahat\\
      \etahat
    \end{bmatrix} 
    &&\text{in } D^+ \,, \label{eq:vprep2}\\
    \vhat^-
    &\,=\, \begin{bmatrix}
      \rho_0 s \SL(s_0) & \DL(s_0) 
    \end{bmatrix} \biggl(
                          \begin{bmatrix}
                            I & 0 \\ 0 & I
                          \end{bmatrix}
                                         + \Ccal^{-1}(s) \Acal(s) \biggr)
                                         \begin{bmatrix}
                                           -\frac{1}{s}\zetahat \\
                                            \etahat
                                         \end{bmatrix} 
    &&\text{in } D^- \,.
  \end{align}  
\end{subequations}

Applying the representation for $\vhat^+$ in \eqref{eq:v+repGreen}, the 
bounds from Lemma~\ref{lmm:PointwiseEstSLDLLD} and the estimate in 
Lemma~\ref{lmm:StabilityUshat} yields the next corollary, which gives a 
pointwise estimate for solutions to \eqref{eq:TransmissionProblemLD}.

\begin{corollary}
\label{cor:LDpwbound}
    Let $s\in \C_+$ with $\real(s)\geq\sigma>0$ and let $\vhat^+$ be as 
    in~\eqref{eq:vprep2} with $\etahat \in \HhalfdiD$ and~$\zetahat \in \HminhalfdiD$.
    Then, for any fixed $z\in D^+$ with $\dist(z,\di D)\geq \delta> 0$ there holds
    \begin{equation}
    \label{eq:vhatboundpwLD}
        |\vhat^+(z)| 
        \,\leq\, C_{\sigma,\delta} |s|^3 
        \frac{\rme^{-\dist(z,\di D)\real(s)}}{\dist(z,\di D)} 
        \Bigl( \|\zetahat\|_{\HminhalfdiD}^2
        + \|\etahat\|_{\HhalfdiD}^2 \Bigr)^{\frac12} \,.
    \end{equation}
\end{corollary}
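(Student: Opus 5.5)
We start from Green's representation \eqref{eq:v+repGreen}, $\vhat^+ = -\SL(s)(\di_\nu^+\vhat^+) + \DL(s)(\gamma^+\vhat^+)$ in $D^+$, and evaluate it at the fixed point $z$. The triangle inequality together with the pointwise bounds \eqref{eq:PointwiseEstSLLD} and \eqref{eq:PointwiseEstDLLD} of Lemma~\ref{lmm:PointwiseEstSLDLLD} gives
\begin{equation*}
  |\vhat^+(z)|
  \,\leq\, C_{\sigma,\delta} \frac{\rme^{-\dist(z,\di D)\real(s)}}{\dist(z,\di D)}
  \Bigl( |s| \, \|\di_\nu^+\vhat^+\|_{\HminhalfdiD}
  + |s|^{\frac32} \|\gamma^+\vhat^+\|_{\HhalfdiD} \Bigr) \,.
\end{equation*}
The exponential and distance factors already have the form required in \eqref{eq:vhatboundpwLD}. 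What remains is to bound the Cauchy data of $\vhat^+$ in terms of the right-hand side of the energy estimate \eqref{eq:StabilityUshat}.

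For the Dirichlet trace we use the trace theorem in $D^+$ combined with \eqref{eq:EquivalenceSnorm}:
\begin{equation*}
  \|\gamma^+\vhat^+\|_{\HhalfdiD}
  \,\leq\, C \|\vhat^+\|_{H^1(D^+)}
  \,\leq\, C \max\bigl(1,|s|^{-1}\bigr) \snorm\vhat^+\snorm_{D^+,s}
  \,\leq\, C_\sigma \snorm\vhat^+\snorm_{D^+,s} \,.
\end{equation*}
Here $\max(1,|s|^{-1})\le \max(1,\sigma^{-1})$, since $|s|\ge\real(s)\ge\sigma$.

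For the Neumann trace we use the weak definition of $\di_\nu^+\vhat^+$ via Green's formula, as in \cite[Lmm.~4.5]{BanSay22}. For $\vhat^+$ satisfying $-\Delta\vhat^+ + s^2\vhat^+=0$ this gives
\begin{equation*}
  \|\di_\nu^+\vhat^+\|_{\HminhalfdiD}
  \,\leq\, C_\sigma |s|^{\frac12} \snorm\vhat^+\snorm_{D^+,s} \,.
\end{equation*}
This is the same $s$-explicit estimate already used in the proof of Lemma~\ref{lmm:PointwiseEstSLDLLD}. To prove it directly, test with a lifting of $\psi\in\HhalfdiD$ into $D^+$ and control the lifting as in \eqref{eq:bounds12}.

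Inserting both trace bounds, each of the two terms is at most $C_{\sigma,\delta}|s|^{\frac32}\snorm\vhat^+\snorm_{D^+,s}$ times the common exponential factor. Lemma~\ref{lmm:StabilityUshat} gives
\begin{equation*}
  \snorm\vhat^+\snorm_{D^+,s}
  \,\leq\, C_\sigma \frac{|s|^{\frac32}}{\real(s)}
  \Bigl( \|\zetahat\|_{\HminhalfdiD}^2 + \|\etahat\|_{\HhalfdiD}^2 \Bigr)^{\frac12} \,.
\end{equation*}
Since $\real(s)\ge\sigma$, the factor $1/\real(s)$ is absorbed into the constant. The total power of $|s|$ is then $\tfrac32+\tfrac32=3$, which is exactly \eqref{eq:vhatboundpwLD}.

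The main obstacle is the $s$-explicit Neumann trace estimate. The exponent must be $|s|^{1/2}$ and no worse, otherwise the final power would exceed $|s|^3$. If \cite[Lmm.~4.5]{BanSay22} does not apply verbatim to the exterior domain $D^+$, we would first localize. Since $z$ lies at distance at least $\delta$ from $\di D$, we multiply by a cutoff supported in $B_R(0)$ and equal to one near $\di D$, then repeat the Green's-formula argument on the bounded annular region, where the same lifting estimates hold.
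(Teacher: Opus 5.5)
Your proposal is correct and follows the route the paper indicates: Green's representation \eqref{eq:v+repGreen}, the pointwise bounds of Lemma~\ref{lmm:PointwiseEstSLDLLD}, the trace bounds (via \eqref{eq:EquivalenceSnorm} and \cite[Lmm.~4.5]{BanSay22}), and the energy estimate \eqref{eq:StabilityUshat}, with $|s|^{3/2}\cdot|s|^{3/2}$ giving $|s|^3$. Your fallback localization is unnecessary, since the paper applies \cite[Lmm.~4.5]{BanSay22} directly on $D^+$ (e.g., in the proof of Proposition~\ref{pro:EstFarfieldPatternPointwiseLD}).
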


Regularity results for elliptic equations (see, e.g.,
\cite[Thm.~4.20]{McL00}) imply that, for every $s\in\C_+$, the weak
solution $(\vhat^-,\vhat^+)\in H^1(D^-)\times H^1(D^+)$ of 
  \eqref{eq:TransmissionProblemLD} satisfies $\vhat^- \in H^2(D^-)$ and 
  $\vhat^+ \in H^2(D^+)$.
In the following proposition we establish an associated estimate that
is explicit in terms of the complex parameter $s$.
This estimate will be required in the derivation of the domain
derivative in the Laplace domain in
Section~\ref{sec:DomainDerivativeLD}, and the explicit dependence on
$s$ will be required when translating this result back
into the time domain in Section~\ref{sec:TimeDomain} below. 

\begin{proposition}
  \label{pro:ScatteringProblemLD_H2estimate}
  Suppose $s\in\C_+$ with $\real(s)\geq\sigma>0$, and
  let~$(\vhat^-,\vhat^+) \in H^1(D^-)\times H^1(D^+)$ be
  the unique weak solution to~\eqref{eq:TransmissionProblemLD} 
  with $\etahat \in \HthreehalfdiD$ and~$\zetahat \in \HhalfdiD$. 
  Then, 
  \begin{equation}
    \label{eq:ScatteringProblemLD_H2estimate}
    \|\vhat^-\|_{H^2(D^-)} + \|\vhat^+\|_{H^2(D^+)} 
    \,\leq\, C_\sigma \frac{|s|^\frac{5}{2}}{\real(s)^{\frac{1}{2}}}
    \Bigl( \|\zetahat\|_{\HhalfdiD}^2
    + \|\etahat\|_{\HthreehalfdiD}^2 \Bigr)^{\frac12} 
  \end{equation}
  for  $s\in\C_+$ with $\real(s)\geq\sigma>0$.
\end{proposition}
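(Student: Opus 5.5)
We reduce the $s$-explicit $H^2$ bound to the $H^1$ energy estimate of Lemma~\ref{lmm:StabilityUshat} together with a standard ($s$-independent) elliptic regularity estimate for a transmission problem. To do this we move the zeroth order terms to the right hand side. The pair $(\vhat^-,\vhat^+)$ solves
\begin{equation*}
  -\Delta\vhat^+ + \vhat^+ \,=\, (1-s^2)\vhat^+ \quad\text{in } D^+ \,, \qquad
  -\Delta\vhat^- + \vhat^- \,=\, \Bigl(1-\frac{s^2}{c_0^2}\Bigr)\vhat^- \quad\text{in } D^- \,,
\end{equation*}
with the same transmission data $\etahat,\zetahat$ on $\di D$.

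The first step is to fix a lifting. Choose $\Lambda\etahat\in H^2(D^+)$ with compact support in $\ol{B_R(0)}$, $\gamma^+\Lambda\etahat=\etahat$ and $\|\Lambda\etahat\|_{H^2(D^+)}\le C\|\etahat\|_{\HthreehalfdiD}$. Then $\vhat^+-\Lambda\etahat$ and $\vhat^-$ satisfy a transmission problem for $-\Delta+1$ with continuous traces and a conormal jump in $\HhalfdiD$. Its source terms are $(1-s^2)\vhat^+ + \Delta\Lambda\etahat-\Lambda\etahat$ in $D^+$ and $(1-s^2/c_0^2)\vhat^-$ in $D^-$.

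The second step applies regularity for this transmission problem with $s$-independent coefficients on the $C^2$ boundary (cf.\ \cite[Thm.~4.20]{McL00}, via localization and flattening; the coefficient jump $1/\rho_0$ is harmless). The operator $-\Delta+1$ with these transmission conditions is coercive, so the lower order term in the regularity estimate can be absorbed. This gives
\begin{equation*}
  \|\vhat^-\|_{H^2(D^-)}+\|\vhat^+\|_{H^2(D^+)}
  \,\le\, C\Bigl( (1+|s|^2)\bigl(\|\vhat^-\|_{L^2(D^-)}+\|\vhat^+\|_{L^2(D^+)}\bigr)
  + \|\zetahat\|_{\HhalfdiD}+\|\etahat\|_{\HthreehalfdiD}\Bigr) \,.
\end{equation*}
In the unbounded exterior domain this needs to be justified. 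Split with a cutoff: near $\di D$ we use the local transmission regularity, and far away we use interior regularity for $-\Delta+1$, which is uniform in the whole space.

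The third step, the $s$-explicit part, is where the claimed power is produced. Here $|s|\,\|\vhat^\pm\|_{L^2}\le\snorm\vhat^\pm\snorm_{D^\pm,s}$. For $\real(s)\ge\sigma$ we have $1+|s|^2\le C_\sigma|s|^2$, hence $(1+|s|^2)\|\vhat^\pm\|_{L^2}\le C_\sigma|s|\,\snorm\vhat^\pm\snorm_{D^\pm,s}$. Inserting Lemma~\ref{lmm:StabilityUshat} gives the factor $|s|\cdot|s|^{3/2}/\real(s)=|s|^{5/2}/\real(s)$, and the data norms there are weaker than those appearing here. This is worse than the claimed $|s|^{5/2}/\real(s)^{1/2}$ by a factor $\real(s)^{-1/2}$.

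This is the main obstacle: one must refine the $L^2$ estimate instead of using the full energy bound. The idea is to take the imaginary and real parts of $a(\zhat,\zhat)=\ell(\zhat)$ separately. Testing with $\zhat$ and taking $\real(\ol s\,\cdot)$ gives $\real(s)\,\snorm\zhat\snorm^2\lesssim|s||\ell(\zhat)|$. Alternatively, one can estimate $\|s\zhat\|_{L^2}^2$ directly from the imaginary part of $s^2$ times the real part of the form. Interpolating these gives $\|s\vhat\|_{L^2}\lesssim |s|^{3/2}\real(s)^{-1/2}\times$data, with the data measured in the stronger norms $\HthreehalfdiD,\HhalfdiD$; this lets us estimate $|\ell(\zhat)|\le C\cdot\text{data}\cdot\|\zhat\|_{L^2}$-type quantities, using the $H^2$ lifting and integration by parts so that only $L^2$ norms of the test function appear. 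If this refinement works, combining it with the second step yields \eqref{eq:ScatteringProblemLD_H2estimate}. If it fails, I would try to obtain the $\real(s)^{-1/2}$ by instead interpolating between the $H^1$ bound and a cruder bound.
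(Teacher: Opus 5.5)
Your first three steps are essentially the paper's proof. The paper also runs the regularity argument of \cite[Thm.~4.20]{McL00} while tracking $s$, and arrives at
\[
\|\vhat^-\|_{H^2(D^-)}+\|\vhat^+\|_{H^2(D^+)} \le C_\sigma|s|\bigl(\snorm\vhat^-\snorm_{D^-,s}+\snorm\vhat^+\snorm_{D^+,s}\bigr) + C_\sigma|s|^2\bigl(\|\zetahat\|_{\HhalfdiD}+\|\etahat\|_{\HthreehalfdiD}\bigr).
\]
It then simply inserts \eqref{eq:StabilityUshat}. Moving $s^2\vhat^\pm$ to the right-hand side and using $s$-independent transmission regularity for $-\Delta+1$ is a clean way to do this tracking. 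It even leaves an $s$-independent factor on the data term.

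The error is your conclusion at the end of the third step. The bound $|s|^{5/2}/\real(s)$ is not worse than the claimed $|s|^{5/2}/\real(s)^{1/2}$. Since $\real(s)\geq\sigma$,
\[
\frac{|s|^{5/2}}{\real(s)} \,=\, \frac{1}{\real(s)^{1/2}}\,\frac{|s|^{5/2}}{\real(s)^{1/2}} \,\le\, \sigma^{-1/2}\,\frac{|s|^{5/2}}{\real(s)^{1/2}} ,
\]
so the extra factor $\real(s)^{-1/2}$ is absorbed into $C_\sigma$. For large $\real(s)$ your bound is in fact the sharper one. The $s$-independent data term is absorbed as well, because $\sigma\le\real(s)\le|s|$ gives $|s|^{5/2}/\real(s)^{1/2}\ge|s|^2\ge\sigma^2$.

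So your argument is already complete after the third step, and the ``main obstacle'' does not exist. The power $\real(s)^{1/2}$ in the statement is just a common majorant of the two terms in the paper's intermediate estimate, whose data term carries $|s|^2\le|s|^{5/2}/\real(s)^{1/2}$.

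Drop the last paragraph entirely. Besides being unnecessary, it is not an argument as written. The interpolation bound $\|s\vhat\|_{L^2}\le C|s|^{3/2}\real(s)^{-1/2}\bigl(\|\zetahat\|_{\HhalfdiD}^2+\|\etahat\|_{\HthreehalfdiD}^2\bigr)^{1/2}$ is asserted without derivation. A proposal that ends with ``if this works / if it fails'' would not count as a proof.
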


\begin{proof}
  Proceeding as in the proof of \cite[Thm.~4.20]{McL00}, but tracking
  the dependency on $s$, we find that, for $\real(s)\geq\sigma>0$,
  \begin{equation*}
    \|\vhat^-\|_{H^2(D^-)} + \|\vhat^+\|_{H^2(D^+)}
    \,\leq\, C_\sigma |s| \Bigl(
      \snorm\vhat^-\snorm_{D^-,s} 
      + \snorm\vhat^+\snorm_{D^+,s} \Bigr)
      + C_\sigma |s|^2 \Bigl( \| \zetahat \|_{\HhalfdiD} 
      + \|\etahat\|_{\HthreehalfdiD} \Bigr) \,.
  \end{equation*}
  Substituting \eqref{eq:StabilityUshat} on the right hand side of this
  estimate gives~\eqref{eq:ScatteringProblemLD_H2estimate}. 
\end{proof}

Next we discuss the Laplace transform of the far field pattern from
Lemma~\ref{lmm:FarfieldExpansion}. 
Since the function $\uinfty$ in \eqref{eq:FarfieldTD} is not causal,
we introduce a shifted version $\uinfty_R\in L^2(\Sd\times(0,\infty))$ by
\begin{equation}
  \label{eq:ShiftedFarfieldPatternTD}
  \uinfty_R(\xi, t)
  \,:=\, \uinfty(\xi,t-R) \,, \qquad t\in(0,\infty) \,.
\end{equation}
Here $R>0$ is as in the causality condition~\eqref{eq:CausalityUi}. 
Then $\uinfty_R$ is causal and its Laplace transform satisfies
\begin{equation}
  \label{eq:ShiftedFarfieldPatternLD}
  \begin{split}
    \uinftyhat_R(\xi,s)
    &\,=\, \frac{1}{4\pi} \int_0^\infty \rme^{-st}
    \int_{\di D} \Bigl(
    (\nu_y\cdot\xi) \di_t(\gamma^+\us)(y,t-R+\xi\cdot y) 
    - (\di_{\nu_y}^+\us)(y,t-R+\xi\cdot y) 
    \Bigr) \ds_y \dt \\
    &\,=\, \frac{1}{4\pi} \int_{\di D} \rme^{-s(R-\xi\cdot y)}
    \int_0^\infty \rme^{-s \tau} \bigl(
    (\nu_y\cdot\xi) \di_t(\gamma^+\us)(y,\tau) 
    - (\di_{\nu_y}^+ \us)(y,\tau) \bigr) \dtau \ds_y \\
    &\,=\, \frac{1}{4\pi} \rme^{-sR} \int_{\di D}
    \rme^{s \xi\cdot y} 
    \bigl(
    (\nu_y\cdot\xi) s(\gamma^+\ushat)(y,s) 
    - (\di_{\nu_y}^+ \ushat)(y,s) \bigr) \ds_y \,.
  \end{split}
\end{equation}
In the second step we used the causality of the scattered field~$\us$.
We denote by
\begin{align*}
  \SL^\infty_R(s):\; \HminhalfdiD \to L^2(\Sd) \,,
  & \qquad (\SL^\infty_R(s)\varphi)(x) 
    \,:=\, \frac{1}{4\pi} \rme^{-sR}
    \int_{\di D} \varphi(y) \rme^{s \xi\cdot y} \ds_y \,,\\
  \DL^\infty_R(s):\; \HhalfdiD \to L^2(\Sd) \,,
  & \qquad (\DL^\infty_R(s) \psi)(x) 
    \,:=\, \frac{1}{4\pi} \rme^{-sR}
    \int_{\di D} \psi(y) (\nu_y\cdot\xi) s \rme^{s \xi\cdot y} \ds_y \,,
\end{align*}
the shifted far field patterns of the single and double layer
potentials from \eqref{eq:DefSLDL}.
Then we obtain from \eqref{eq:ExplRepSolnTransmissionProblemLD} the
explicit representation
\begin{equation}
  \label{eq:ExplRepFarfieldTransmissionProblemLD}
  \uinftyhat_R
  \,=\, - \begin{bmatrix}
    s\SL^\infty_R(s) & \DL^\infty_R(s)
  \end{bmatrix} \Ccal^{-1}(s) \Acal(s) \begin{bmatrix}
    \frac{1}{s}\di_\nu\uihat \\
    -\gamma\uihat 
  \end{bmatrix} \qquad \text{on } \Sd \,.
\end{equation}
This is also the representation and the associated integral
equation that we will use for the numerical simulation of solutions of
the direct scattering problem in Section~\ref{sec:NumericalExamples}. 

In order to translate this representation into the time-domain in
Section~\ref{sec:TimeDomain} below, we require the following pointwise
estimate. 

\begin{proposition}
  \label{pro:EstFarfieldPatternPointwiseLD}
  Suppose $s\in\C_+$, and let 
  \begin{equation}\label{eq:vinftyRrepLD}
    \vinftyhat_R
      \,=\, - \begin{bmatrix}
        s\SL^\infty_R(s) & \DL^\infty_R(s)
      \end{bmatrix} \Ccal^{-1}(s) \Acal(s) \begin{bmatrix}
        - \frac{1}{s}\zetahat \\
        \etahat
      \end{bmatrix} \qquad \text{on } \Sd \,,
  \end{equation}
  be the shifted far field pattern corresponding to the unique 
  solution~$(\vhat^-,\vhat^+) \in H^1(D^-)\times H^1(D^+)$ of the 
  transmission problem~\eqref{eq:TransmissionProblemLD} with 
  $\etahat \in \HhalfdiD$ and~$\zetahat \in \HminhalfdiD$.
  Denoting
  \begin{equation*}
    \delta_D(\xi)
    \,:=\, \sup_{x\in D}\, x\cdot \xi \,, \qquad \xi\in\Sd \,,
  \end{equation*}
  we have the estimate
  \begin{equation}
    \label{eq:EstFarfieldPatternPointwiseLD}
    | \vinftyhat_R(\xi) |
    \,\leq\, C_\sigma \rme^{-\real(s)(R-\delta_D(\xi))}
    \frac{|s|^{\frac52}}{\real(s)}
    \Bigl( \|\zetahat\|_{\HminhalfdiD}^2
    + \|\etahat\|_{\HhalfdiD}^2 \Bigr)^{\frac{1}{2}} \,,
    \qquad \xi\in\Sd \,.
  \end{equation}
\end{proposition}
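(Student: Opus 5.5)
Observe first that the representation \eqref{eq:vinftyRrepLD} coincides with the far field formula \eqref{eq:ShiftedFarfieldPatternLD} applied to the solution $\vhat^+$. By Green's representation \eqref{eq:v+repGreen}, we have $\bigl(\tfrac1s\di_\nu^+\vhat^+,-\gamma^+\vhat^+\bigr) = -\Ccal^{-1}(s)\Acal(s)\bigl(-\tfrac1s\zetahat,\etahat\bigr)$, and therefore
\begin{equation*}
  \vinftyhat_R(\xi)
  \,=\, \frac{1}{4\pi}\,\rme^{-sR}\int_{\di D}\rme^{s\xi\cdot y}
  \Bigl((\nu_y\cdot\xi)\, s\,\gamma^+\vhat^+(y) - \di_\nu^+\vhat^+(y)\Bigr)\ds_y \,.
\end{equation*}
Rather than going through the (worse) bound \eqref{eq:EstInverseOpCcal} on $\Ccal^{-1}$, which would cost extra powers of $|s|$, I will estimate the Cauchy data of $\vhat^+$ directly from the energy bound \eqref{eq:StabilityUshat} of Lemma~\ref{lmm:StabilityUshat}. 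The main task is then to control the $\HhalfdiD$ norm of the kernel $y\mapsto\rme^{s\xi\cdot y}$ and of $y\mapsto(\nu_y\cdot\xi)\rme^{s\xi\cdot y}$.

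For the kernel, I will use an extension argument: extend $g(y)=\rme^{s\xi\cdot y}$ into $D^-$ by the same formula, but multiply it by a fixed cutoff $\chi$ supported near $\ol D$. Note that $|g(y)|=\rme^{\real(s)\xi\cdot y}\le\rme^{\real(s)\delta_D(\xi)}$ on $\ol D$, and $|\nabla g|\le|s||g|$. Hence
\begin{equation*}
  \snorm g\snorm_{D^-,s}\le C|s|\,\rme^{\real(s)\delta_D(\xi)} \,,
  \qquad
  \|g\|_{\HhalfdiD}\le C|s|\,\rme^{\real(s)\delta_D(\xi)} \,.
\end{equation*}
For the normal factor $\nu_y\cdot\xi$, I will use a $C^1$ extension of $\nu$, which is available since $D$ is $C^2$; the same bounds then hold for $(\nu\cdot\xi)g$. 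The key identity is that $\rme^{-sR}\rme^{s\xi\cdot y}$ has modulus $\rme^{-\real(s)(R-\xi\cdot y)}\le\rme^{-\real(s)(R-\delta_D(\xi))}$. Moreover, the $\xi$-uniformity is clear from the construction.

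Next I will pair the Cauchy data with this kernel. For the Neumann term, I will use the generalized Green identity in $D^-$ for the interior field. It is cleaner, however, to write $\di_\nu^+\vhat^+=\zetahat+\tfrac1{\rho_0}\di_\nu^-\vhat^-$ and $\gamma^+\vhat^+=\etahat+\gamma^-\vhat^-$, and to evaluate the interior contributions by Green's first identity against the extension $g$ in $D^-$:
\begin{equation*}
  \langle\di_\nu^-\vhat^-,g\rangle
  \,=\, \int_{D^-}\Bigl(\nabla\vhat^-\cdot\nabla g + \tfrac{s^2}{c_0^2}\vhat^-g\Bigr)\dx \,,
\end{equation*}
which is bounded by $C\snorm\vhat^-\snorm_{D^-,s}\,\snorm g\snorm_{D^-,s}$. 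The trace term $s\langle\gamma^-\vhat^-,(\nu\cdot\xi)g\rangle$ is handled in the same way, by applying the divergence theorem to the field $\vhat^-(\nu\cdot\xi)g\,\xi$ in $D^-$, which gives $|s|\,\|\vhat^-\|\,\|\nabla(\ldots)\|\le C|s|\,\snorm\vhat^-\snorm_{D^-,s}\rme^{\real(s)\delta_D}$. The data terms are bounded by $\|\zetahat\|\,\|g\|_{\Hhalf}$ and $|s|\,\|\etahat\|\,\|g\|_{\Hminhalf}$. Collecting the factors, I obtain $|s|\cdot|s|^{3/2}/\real(s)$ from the interior energy, up to powers absorbed using $\real(s)\ge\sigma$.

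The main obstacle is the power bookkeeping needed to land exactly on $|s|^{5/2}/\real(s)$. Done naively, the gradient of $g$ costs an extra factor $|s|$. To avoid this, I will test with $\Phi$-like identities or rescaled norms: pair $\vhat^-$ only against $g$ measured in $\snorm\cdot\snorm_{D^-,s}$ scaled by $|s|^{-1}$. Concretely, $\snorm g\snorm_{D^-,s}\lesssim|s|\rme^{\real(s)\delta_D}$, and this multiplies $\snorm\vhat^-\snorm\lesssim|s|^{3/2}/\real(s)$ to give exactly $|s|^{5/2}/\real(s)$. The remaining data terms are of lower order in $|s|$ and are absorbed using $|s|\ge\real(s)\ge\sigma$, which yields \eqref{eq:EstFarfieldPatternPointwiseLD}.
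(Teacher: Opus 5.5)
Your argument is correct, but it takes a different route from the paper. The paper stays in the exterior. It bounds $|s|\,\|\gamma^+\vhat^+\|_{\HhalfdiD}$ by the trace theorem and $\|\di_\nu^+\vhat^+\|_{\HminhalfdiD}$ by \cite[Lmm.~4.5]{BanSay22} (a bound of order $|s|^{1/2}\snorm\vhat^+\snorm_{D^+,s}$), and then inserts the exterior part of \eqref{eq:StabilityUshat}.

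You instead use the transmission conditions to trade the exterior Cauchy data for $(\etahat,\zetahat)$ plus interior Cauchy data. You then integrate by parts in the bounded domain $D^-$ against the explicit kernel $g(y)=\rme^{s\xi\cdot y}$. As a result, only $\snorm\vhat^-\snorm_{D^-,s}$ and the elementary bounds $\|g\|_{L^2(D)}\le C\rme^{\real(s)\delta_D(\xi)}$ and $\snorm g\snorm_{D^-,s}\le C|s|\rme^{\real(s)\delta_D(\xi)}$ enter.

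\textbf{What each route buys.} Yours needs no Neumann-trace lemma and keeps the $s$-dependence of the kernel fully explicit. The paper's route is shorter and applies to any exterior field satisfying the energy bound. However, its first displayed inequality implicitly treats $\|\rme^{s\xi\cdot(\cdot)}\|_{\HhalfdiD}$ as $O(\rme^{\real(s)\delta_D(\xi)})$, whereas this norm grows like $|s|^{1/2}\rme^{\real(s)\delta_D(\xi)}$. The exponent $5/2$ still comes out right only because the paper's Neumann term ($|s|^2/\real(s)$) had room to spare.

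Your bookkeeping avoids this issue. Both interior terms give $|s|\cdot|s|^{3/2}/\real(s)$ directly, so the ``obstacle'' in your last paragraph never actually arises. The data terms are $O(|s|)\rme^{\real(s)\delta_D(\xi)}$, which is at most $C_\sigma|s|^{5/2}/\real(s)$ times that exponential.

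\textbf{Minor slips.}
\begin{enumerate}
\item From \eqref{eq:BoundaryIntegralEqn_ScatteringProblem} one has $(\tfrac1s\di_\nu^+\vhat^+,-\gamma^+\vhat^+)=\Ccal^{-1}(s)\Acal(s)(-\tfrac1s\zetahat,\etahat)$, without your minus sign. This is irrelevant for the modulus.
\item The divergence theorem should be applied to $\vhat^-g\,\xi$. Its normal component on $\di D$ is $\vhat^-(\nu\cdot\xi)g$, and its divergence is $g\,\xi\cdot\nabla\vhat^-+s\vhat^-g$. The field $\vhat^-(\nu\cdot\xi)g\,\xi$ you wrote would produce $(\nu\cdot\xi)^2$ on the boundary. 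With the correct field, neither an extension of $\nu$ nor the cutoff is needed.
\end{enumerate}
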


\begin{proof}  
  From~\eqref{eq:ShiftedFarfieldPatternLD} we immediately obtain that 
  \begin{equation*}
    | \vinftyhat_R(\xi) |
    \,\leq\, C \rme^{-\real(s)(R-\delta_D(\xi))}
    \Bigl( |s| \|\gamma^+\vhat^+\|_{\HhalfdiD}
    + \| \di_{\nu_y}^+\vhat^+\|_{\HminhalfdiD} \Bigr) \,.
  \end{equation*}
  Combining this with the continuity of the trace operator
  $\gamma:H^1(D^+)\to\HhalfdiD$ and
  \cite[Lmm.~4.5]{BanSay22} gives 
  \begin{equation*}
    \begin{split}
      | \vinftyhat_R(\xi,s) |
      &\,\leq\, C \rme^{-\real(s)(R-\delta_D(\xi))}
      \Bigl( |s| \| \vhat^+ \|_{H^1(D^+)}
      + \max(1,|s|^{-\frac12}) |s|^{\frac12} \snorm\vhat^+\snorm_{D^+,s}
      \Bigr) \,.
    \end{split}
  \end{equation*}
  Applying~\eqref{eq:EquivalenceSnorm} and~\eqref{eq:StabilityUshat}
  we obtain
  \begin{equation*}
    \begin{split}
      | \vinftyhat_R(\xi,s) |
      &\,\leq\, C_\sigma \rme^{-\real(s)(R-\delta_D(\xi))}
      \frac{|s|^{\frac52}}{\real(s)} 
      \Bigl( \|\zetahat\|_{\HminhalfdiD}^2
      + \|\etahat\|_{\HhalfdiD}^2 \Bigr)^{\frac{1}{2}} \,.
    \end{split}
  \end{equation*}
  This ends the proof.  
\end{proof}

\section{Domain derivative in the Laplace domain}
\label{sec:DomainDerivativeLD}
The domain derivative for the transmission problem for the Helmholtz
equation with real wave numbers, i.e., for
\eqref{eq:TransmissionProblemLD} with $s\in\rmi\R$, has been
established in~\cite{Het95}. 
This result including its proof can be transferred to our setting
$s\in\C_+$ with minor modifications. 
However, in contrast to \cite{Het95} we require explicit knowledge on
the dependence of the constants in the associated estimates in terms of 
the complex parameter $s$ in order to obtain corresponding results for the
time-dependent problem via the inverse Laplace transform. 
These estimates can be shown as worked out in detail in the proof
of~\cite[Prop.~2.3]{KnoNic26} for the sound-soft scattering problem.
Since no new ideas are required, we just comment on the main steps of
the proof and refer the reader to~\cite{Het95} and~\cite{KnoNic26} for
details.  

We consider deformations of the $C^2$-boundary $\di D$ of the obstacle
by means of a sufficiently small vector field $h\in C^1(\di D,\Rd)$
and accordingly we denote by
$\di D_h := {\{ y\in\Rd \;|\; y=x+h(x) \,,\; x\in\di D \}}$
the boundary of a perturbed domain $D_h\subs\Rd$.
We always assume that ${\|h\|_{C^1(\di D)} < h_0}$ for
some~$h_0>0$ small enough such that $D_h \subset\subset \BR$ and
$\di D_h$ is a $C^1$ domain. 
Given an observation point $z\in\Rd\setminus\ol{\BR}$, we study the
non-linear operator
\begin{equation}
  \label{eq:OperatorFhatD}
  \Fhat_D :\; \Dcal(\Fhat_D) \subset C^1(\di D,\Rd) \to \C \,,
  \qquad \Fhat_D(h) \,:=\, \ushat_h(z) \,,
\end{equation}
where $(\uthat_h,\ushat_h)\in H^1(D_h)\times H^1(\Rd\setminus\ol{D_h})$
denotes the solution of the transmission
problem~\eqref{eq:TransmissionProblemLD} with~$D$ replaced by $D_h$
and with~$\etahat = - \gamma\uihat|_{\di D_h}$ and 
$\zetahat = -\di_\nu\uihat|_{\di D_h}$. 
Here,
\begin{equation}
    \label{eq:DefDomainFhatD}
    \Dcal(\Fhat_D) 
    \,:=\, \bigl\{ h\in C^1(\di D,\Rd) \;\big|\; \|h\|_{C^1(\di D)}<h_0 \bigr\} \,.
\end{equation}

\begin{proposition}
  \label{pro:FrechetDerivativeLD}
  The operator $\Fhat_D$ from \eqref{eq:OperatorFhatD} is Fr\'echet
  differentiable at zero.
  Denoting by $(\uthat,\ushat) \in H^1(D^-)\times H^1(D^+)$ the
  solution to the transmission
  problem~\eqref{eq:TransmissionProblemLD} with $\etahat = - \gamma\uihat$ 
  and $\zetahat = -\di_\nu\uihat$, the Fr\'echet derivative
  is given by $\Fhat_D'(0)h=\uthat_h'(z)$ for all $h\in C^1(\di D,\Rd)$, where
  $\uthat_h'\in H^1(\Rd\setminus\di D)$ is the unique solution of the
  transmission problem
  \begin{subequations}
    \label{eq:TransmissionProblemFDLD}
    \begin{align}
      \Delta\uthat_h'- s^2\uthat_h'
      &\,=\, 0  
      &&\text{in } D^+ \,,\\
      \Delta\uthat_h' - \frac{s^2}{c_0^2} \uthat_h'
      &\,=\, 0  
      &&\text{in } D^- \,,\\
      \gamma^+\uthat_h' - \gamma^-\uthat_h'
      &\,=\, h_\nu \Bigl( 1 - \frac{1}{\rho_0} \Bigr) \di_\nu^-\uthat
      &&\text{on } \di D \,,\label{eq:TransmissionProblemFDLDc}\\
      \di_\nu^+\uthat_h'
      - \frac{1}{\rho_0} \di_\nu^-\uthat_h'
      &\,=\, - s^2  h_\nu \Bigl( 1 - \frac{1}{\rho_0 c_0^2} \Bigr) \gamma^{-}\uthat
        + \sdiv \Bigl( h_\nu \Bigl( 1-\frac{1}{\rho_0} \Bigr) \sgrad
        \gamma^{-}\uthat \Bigr)
      &&\text{on } \di D \,. \label{eq:TransmissionProblemFDLDd}
    \end{align}
  \end{subequations}
  Here $h_\nu:=h\cdot\nu$, and $\sdiv$ and $\sgrad$ denote the surface
  divergence and the surface gradient on $\di D$, respectively.
  The function $\uthat_h'$ is called the domain derivative of the
  scattered field in the Laplace domain. 
  We have
  \begin{equation}
    \label{eq:EstFrechetDerivativeLDpointwise}
    \bigl| \Fhat_D(h) - \Fhat_D(0) - \Fhat_D'(0)h \bigr|
    \,\leq\, C_\sigma \frac{\rme^{-\dist(z,\di\BR) \real(s)}}{\dist(z,\di\BR)}
    \frac{|s|^{\frac{9}{2}}}{\real(s)^3} \|\uihat\|_{H^1(\BRtilde\setminus\ol{\BR})}
    \|h\|_{C^1(\di D)}^2     
  \end{equation}
  for all $s\in\C_+$ with $\real(s)\geq \sigma>0$ and for any $\Rtilde>R$.   
\end{proposition}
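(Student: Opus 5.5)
We follow the variational approach of \cite{Het95} (for $s\in\rmi\R$) together with the bookkeeping of the $s$-dependence from \cite[Prop.~2.3]{KnoNic26}. First we transform the perturbed problem back to the reference configuration. Extend $h$ to a $C^1$ vector field on $\Rd$ supported in $\BR$, with $\|h\|_{C^1(\Rd)}\le C\|h\|_{C^1(\di D)}$. Set $\varphi_h:=I+h$ and consider $\zhat_h:=\uthat_h\circ\varphi_h$, the total field pulled back. Since $h$ vanishes outside $\BR$, $\zhat_h$ coincides with $\uthat_h$ near $z$. The weak formulation on $D_h$ turns into a weak formulation on $D$ with $h$-dependent coefficients: the matrix $\frac1\rho J_h^{-1}J_h^{-\top}\det J_h$ and the weight $\frac{s^2}{\rho c^2}\det J_h$. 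It is convenient to write everything for the total field $\uthat=\uihat+\ushat$, with the radiation condition imposed on $\ushat$. The transformed sesquilinear form is then
\[
  a_h \;=\; a+\delta a_h+O(\|h\|^2),
\]
where $\delta a_h$ is linear in $h$ and involves $\div h\,I-(\nabla h+\nabla h^\top)$ as well as $s^2\div h$. By \eqref{eq:coerc-a}, each of these terms is bounded by $C(1+|s|^2/|s|^2)$ in the $\snorm\cdot\snorm_{s}$-pairing. So for $\|h\|_{C^1}$ small, uniformly in $s$, $a_h$ remains coercive with constant $\sim\real(s)/|s|$.

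Second, define the material derivative $\dot u$ as the solution of $a(\dot u,\phihat)=-\delta a_h(\uthat,\phihat)$. Subtracting the equations for $\zhat_h$, $\uthat$ and $\dot u$ gives
\[
  a_h\bigl(\zhat_h-\uthat-\dot u,\phihat\bigr)
  \;=\;-(a_h-a-\delta a_h)(\uthat,\phihat)-\delta a_h(\dot u,\phihat).
\]
The right-hand side is quadratic in $h$. Coercivity then yields
\[
  \snorm\zhat_h-\uthat-\dot u\snorm_s\;\le\; C\frac{|s|}{\real s}\,\|h\|^2\Bigl(\snorm\uthat\snorm_s+\snorm\dot u\snorm_s\Bigr),
\]
and similarly $\snorm\dot u\snorm_s\le C\frac{|s|}{\real s}\|h\|\,\snorm\uthat\snorm_s$. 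The energy of $\uthat$ itself in $\BR$ is controlled by Lemma~\ref{lmm:StabilityUshat}. Its data $\gamma\uihat,\di_\nu\uihat$ are bounded by $\|\uihat\|_{H^1(\BRtilde\setminus\ol{\BR})}$ times powers of $|s|$, using interior elliptic regularity for the Helmholtz-type equation satisfied by $\uihat$. Collecting the powers, we expect a factor of the form $|s|^{a}/\real(s)^3$: one factor $1/\real s$ from each of the three coercivity applications.

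Third, we identify the domain derivative. Set $\uthat_h':=\dot u-h\cdot\nabla\uthat$ in $D^\pm$. Using the $H^2$-regularity of Proposition~\ref{pro:ScatteringProblemLD_H2estimate} and integration by parts, exactly as in \cite{Het95}, $\uthat_h'$ solves the Helmholtz equations in $D^\pm$ with the jump conditions \eqref{eq:TransmissionProblemFDLDc}--\eqref{eq:TransmissionProblemFDLDd}. Uniqueness for \eqref{eq:TransmissionProblemFDLD} follows from Lemma~\ref{lmm:StabilityUshat}. Since $h=0$ near $\di\BR$, we have $\uthat_h'=\dot u$ there.

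Finally, we pass to pointwise values at $z$. The remainder $w:=\ushat_h-\ushat-\uthat_h'$ solves $\Delta w-s^2w=0$ outside $\BR$. We represent it via Green's formula \eqref{eq:v+repGreen} on the sphere $\di\BR$ and apply Corollary~\ref{cor:LDpwbound}, or more precisely Lemma~\ref{lmm:PointwiseEstSLDLLD}, with $D$ replaced by $\BR$. This gives the factor $\rme^{-\dist(z,\di\BR)\real s}/\dist(z,\di\BR)$ times $|s|^{3/2}$ times the trace norms of $w$. Those trace norms are bounded by the $\snorm\cdot\snorm_s$-estimate from the second step, which yields \eqref{eq:EstFrechetDerivativeLDpointwise}; linearity and boundedness of $h\mapsto\uthat_h'(z)$ follow in the same way.

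The main obstacle is the second step: tracking the exact powers of $|s|$ and $\real s$ in the remainder estimate so as to land on $|s|^{9/2}/\real(s)^3$. This requires bounding the $s^2$-weighted perturbation terms in the rescaled norm, not in the $H^1$ norm. We would follow \cite{KnoNic26} line by line there.
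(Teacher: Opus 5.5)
Your architecture is the paper's: pull back by $x\mapsto x+h(x)$, define the material derivative through the unperturbed form, set $\uthat_h'=\dot u-h\cdot\nabla\uthat$, and pass to point values by Green's representation on a sphere around $D$ together with Lemma~\ref{lmm:PointwiseEstSLDLLD}, which costs $|s|^{3/2}$ times the rescaled energy. The gap is in the base energy estimate that feeds this chain, and that estimate is what fixes the exponent $|s|^{9/2}/\real(s)^3$.

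For $\real(s)>0$ the total field is not an $H^1(\Rd)$ function, because the Laplace transformed plane wave behaves like $\rme^{-s\theta\cdot x}$. So it cannot serve as the unknown of a coercive full-space problem. Your substitute is Lemma~\ref{lmm:StabilityUshat}, with data $\gamma\uihat$, $\di_\nu\uihat$ on $\di D$ bounded by $\|\uihat\|_{H^1(\BRtilde\setminus\ol{\BR})}$ via interior elliptic regularity. This fails on two counts.
\begin{itemize}
\item Interior regularity bounds stronger norms on a subset by weaker norms on a larger set containing it. It cannot bound traces on $\di D\subset\BR$ by norms on an annulus that does not contain $\di D$, so an additional global argument is needed there.
\item Even after that is repaired, the powers do not close. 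Lemma~\ref{lmm:StabilityUshat} already carries $|s|^{3/2}/\real(s)$, and the $\HminhalfdiD$-norm of $\di_\nu\uihat$ costs a further $|s|^{1/2}$ with the available trace estimates. This route therefore ends with a power of $|s|$ strictly larger than $9/2$. Your count of three factors $1/\real(s)$ also omits the step from $\di D$ to the annulus.
\end{itemize}

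The missing device is the cut-off lifting used in the paper. Take $\chi\in C^\infty(\Rd)$ with $\chi=1$ on $\BR$ and $\chi=0$ outside $\BRtilde$, and put $\what:=\ushat+\chi\uihat\in H^1(\Rd)$ and $\what_h:=\ushat_h+\chi\uihat$.
\begin{itemize}
\item In $\BR$ these coincide with the total fields.
\item Since $(\Delta-s^2)\uihat=0$, the full-space problems for $\what$ and $\what_h$ have the same right-hand side, and it involves $\uihat$ only on the annulus. There $h=0$, so the pull-back does not change it.
\item One application of \eqref{eq:coerc-a} gives $\snorm\what\snorm_{\Rd,s}\le C\frac{|s|}{\real(s)}\|\uihat\|_{H^1(\BRtilde\setminus\ol{\BR})}$.
\item Your second step then yields $\snorm\dot u\snorm_{\Rd,s}\le C\frac{|s|^2}{\real(s)^2}\|\uihat\|_{H^1(\BRtilde\setminus\ol{\BR})}\|h\|_{C^1(\di D)}$ and the remainder bound \eqref{eq:EstMaterialDerivative} with factor $\frac{|s|^3}{\real(s)^3}$.
\end{itemize}
Multiplied by the $|s|^{3/2}$ from your last step, this is exactly \eqref{eq:EstFrechetDerivativeLDpointwise}.

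Two smaller corrections.

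First, your argument for coercivity of $a_h$ uniformly in $s$ is perturbative, and as written it does not give uniformity. A perturbation of size $C\|h\|_{C^1}$ in the $\snorm\cdot\snorm_s$-pairing, set against the coercivity constant $\real(s)/|s|$ of $a$, only survives for $\|h\|_{C^1}\lesssim\real(s)/|s|$. The claim is still true for a structural reason. The pulled-back coefficient matrix $\frac{1}{\rho}\det(J_h)J_h^{-1}J_h^{-\top}$ is real, symmetric and uniformly positive definite, and the weight $\frac{\det J_h}{\rho c^2}$ is positive. Hence the computation of $\real(\ol{s}\,a_h(u,u))$ in \eqref{eq:coerc-a} goes through verbatim.

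Second, in your remainder identity the last term should be $-(a_h-a)(\dot u,\phihat)$. The right-hand side is of size $\|h\|^2\snorm\what\snorm+\|h\|\,\snorm\dot u\snorm$, not $\|h\|^2(\snorm\what\snorm+\snorm\dot u\snorm)$. The $\dot u$-term is only linear in $h$, which is why the remainder picks up $\frac{|s|^2}{\real(s)^2}$ rather than $\frac{|s|}{\real(s)}$ relative to the base estimate.
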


\begin{proof}
  Existence and uniqueness of a solution
  $\uthat_h' \in H^1(\Rd\setminus\di D)$ to~\eqref{eq:TransmissionProblemFDLD} 
  can again be shown by applying the Lemma of Lax-Milgram to the weak formulation of this
  transmission problem.
  Below we will give an alternative proof using integral equation
  techniques. 
  
  Let $\chi\in C^\infty(\Rd)$ be a cut-off function with $\chi=1$ in
  $\BR$ and $\chi=0$ in $\Rd\setminus\ol{\BRtilde}$.
  Accordingly, we define $\what := \ushat + \chi\uihat \in H^1(\Rd)$. 
  Considering the weak formulation of \eqref{eq:ScatteringProblemLD}, it
  follows immediately as in the proof of \cite[Pro.~2.3]{KnoNic26} that
  \begin{equation*}
    \snorm \what \snorm_{\Rd,s}
    \,\leq\, C \frac{|s|}{\real(s)}
    \|\uihat\|_{H^1(\BRtilde\setminus\ol{\BR})} \,.
  \end{equation*}
  
  Denoting by $\ushat_h \in H^1(\Rd)$ the solution to
  \eqref{eq:ScatteringProblemLD} with $D$ replaced by $D_h$ we define
  ${\what_h := \ushat_h + \chi\uihat}$. 
  Considering an extension of the boundary perturbation $h$ to $\BR$
  that is supported in a neighborhood of $\di D$, denoted again by
  $h$, we define a diffeomorphism $\varphi: \BR \to \BR$
  by~$\varphi(x) := x+h(x)$. 
  This can be used to define the transformed field
  $\wtilde_h := \what_h\circ\varphi$.
  It follows by combining the analysis from~\cite{Het95}
  with~\cite{KnoNic26} that  
  \begin{equation*}
    \snorm \wtilde_h -\what \snorm_{\Rd,s}
    \,\leq\, C \frac{|s|^2}{\real(s)^2}
    \|\uihat\|_{H^1(\BRtilde\setminus\ol{\BR})}
    \| h \|_{C^1(\di D)} \,.
  \end{equation*}
  Moreover, it can be shown, following the arguments
  in~\cite{Het95,KnoNic26} that 
  $\What := \uhat_h' + h\cdot\nabla\what$ fulfills
  \begin{equation}
    \label{eq:EstMaterialDerivative}
    \snorm \wtilde_h - \what - \What \snorm_{\Rd,s}
    \,\leq\, C \frac{|s|^3}{\real(s)^3}
    \|\uihat\|_{H^1(\BRtilde\setminus\ol{\BR})} \|h\|_{C^1(\di D)}^2 \,.
  \end{equation}

  To establish the pointwise estimate
  \eqref{eq:EstFrechetDerivativeLDpointwise}, we proceed as in the
  proof of \cite[Pro.~2.3]{KnoNic26}.
  Using the fact that
  $\uthat_h(z) - \uthat(z) = \wtilde_h(z) - \what(z)$ and that
  $\uhat_h'(z) = \What(z)$ (since $h(z)=0$) we find
  using Green's representation theorem (with single- and double layer
  potentials on $\di B_{R-\eps}(0)$ for some $\eps>0$ sufficiently
  small such that $\ol{D}\subs B_{R-\eps}(0)$) and~\eqref{eq:PointwiseEstSLDLLD} that  
  \begin{equation*}
    \begin{split}
      | \uthat_h(z)&-\uthat(z)-\uhat_h'(z) |
      \,=\, | \wtilde_h(z)-\what(z)-\What(z) | \\
      &\,=\, \bigl|
      -(\SL(s)(\di_\nu^+(\wtilde_h-\what-\What))(z)
      + (\DL(s)(\gamma^+(\wtilde_h-\what-\What))(z)
      \bigr| \\
      &\,\leq\, C_{\sigma,\eps} \, |s| \, 
      \frac{\rme^{-\dist(z,\di\BR)\real(s)}}{\dist(z,\di\BR)}
      \Bigl( \|\di_\nu^+(\what_h-\what-\What)\|_{H^{-\frac12}(\di B_{R-\eps}(0))} \\
      &\phantom{\,\leq\, C_{\sigma,\eps} \, |s| \, 
        \frac{\rme^{-\dist(z,\di\BR)\real(s)}}{\dist(z,\di\BR)}
        \Bigl(}
      + |s|^{\frac{1}{2}} \| \gamma^+(\wtilde_h-\what-\What) \|_{H^{\frac12}(\di B_{R-\eps}(0))}
      \Bigr) \,.
    \end{split}
  \end{equation*}
  Applying \cite[Lmm.~4.5]{BanSay22}, the continuity of
  the trace operator
  $\gamma^+: H^1(\Rd\setminus\ol{B_{R-\eps}(0)})
  \to H^{\frac12}(\di B_{R-\eps}(0))$ and \eqref{eq:EquivalenceSnorm} we
  obtain that 
  \begin{equation*}
    \begin{split}
      | \uthat_h(z)-\uthat(z)-\uhat_h'(z) |
      &\,\leq\, C_{\sigma,\eps} 
      \frac{\rme^{-\dist(x,\di\BR)\real(s)}}{\dist(x,\di\BR)}
      \, |s|^{\frac{3}{2}} \, 
      \snorm \what_h-\what-\What\snorm_{\Rd\setminus\ol{D},s} \\
      &\,\leq\, C_{\sigma,\eps} \,
      \frac{\rme^{-\dist(x,\di\BR)\real(s)}}{\dist(x,\di\BR)}
      \, \frac{|s|^{\frac{9}{2}}}{\real(s)^3} \,
      \|\uihat\|_{H^1(\BRtilde\setminus\ol{\BR})} \| h \|_{C^1(\di D)}^2 \,.
    \end{split}
  \end{equation*}
  For the second inequality, we have used~\eqref{eq:EstMaterialDerivative}.
  This shows \eqref{eq:EstFrechetDerivativeLDpointwise}. 
\end{proof}

Using Green's representation theorem, we can write the weak solution
$\uhat_h'\in H^1(D^+)$ 
of~\eqref{eq:TransmissionProblemFDLD} as 
\begin{align*}
  \uhat_h' &\,=\, - \SL(s)(\di_\nu^+\uhat_h') + \DL(s)(\gamma^+\uhat_h') 
  &&\text{in } D^+ \,,\\
  \uhat_h' &\,=\, \SL(s_0)(\di_\nu^-\uhat_h') - \DL(s_0)(\gamma^-\uhat_h') 
  &&\text{in } D^- \,,
\end{align*}
where as before $s_0 := s/c_0$.
In the following we denote by
\begin{subequations}
  \label{eq:DefEtahatprimeZetahatprime}
  \begin{align}
    \etahat'
    &\,:=\, h_\nu \Bigl( 1 - \frac{1}{\rho_0} \Bigr) \di_\nu^-\uthat  \,,
      \label{eq:DefEtahatprime}\\ 
    \zetahat'
    &\,:=\, - s^2  h_\nu \Bigl( 1 - \frac{1}{\rho_0 c_0^2} \Bigr) \gamma^-\uthat
      + \sdiv \Bigl( h_\nu \Bigl( 1-\frac{1}{\rho_0} \Bigr) \sgrad
      \gamma^- \uthat \Bigr) \,, \label{eq:DefEtahatprimeZetahatprime2}
  \end{align}
\end{subequations}
the right hand sides of the transmission conditions
in~\eqref{eq:TransmissionProblemFDLD}.
Applying the jump relations
for the single and double layer potential on~$\di D$, we find that 
$(\frac1s\di_\nu^+\uhat_h',-\gamma^+\uhat_h')\in \HminhalfdiD\times\HhalfdiD$
satisfies the integral equation 
\begin{equation}
  \label{eq:BoundaryIntegralEqn_DomainDerivative}
  \Ccal(s)
  \begin{bmatrix}
    \frac{1}{s}\di_\nu^+\uhat_h' \\
    -\gamma^+\uhat_h'
  \end{bmatrix}
  \,=\,
  \Acal(s)
  \begin{bmatrix}
    -\frac{1}{s}\zetahat' \\
    \etahat'
  \end{bmatrix}
  \qquad \text{on } \di D \,.
\end{equation}
Here, $\Ccal(s)$ and $\Acal(s)$ are the integral operators introduced
in \eqref{eq:DefOpCcal} and \eqref{eq:DefOpAcal}, respectively.
The operator $\Ccal(s)$ has a bounded inverse
$\Ccal^{-1}: \Xcal'\to \Xcal$ for any $s\in\C_+$; see Appendix~\ref{app:A}. 
Accordingly, we obtain the following explicit representation of the
domain derivative $\uhat_h'\in H^1(\Rd\setminus\di D)$ of the
scattered field in the Laplace domain, 
\begin{subequations}
  \label{eq:ExplRepDomainDerivativeLD}
  \begin{align}
    \uhat_h'
    &\,=\, - \begin{bmatrix}
      s\SL(s) & \DL(s)
    \end{bmatrix} \Ccal^{-1}(s) \Acal(s) \begin{bmatrix}
      -\frac{1}{s}\zetahat' \\
      \etahat'
    \end{bmatrix}
    &&\text{in } D^+ \,,\\
    \uhat_h' 
    &\,=\, \begin{bmatrix}
      \rho_0s \SL(s_0) & \DL(s_0) 
    \end{bmatrix}
                         \biggl( \Ccal^{-1}(s) \Acal(s) -
                         \begin{bmatrix}
                           0 & I \\ I & 0
                         \end{bmatrix}
                                        \biggr)
                                        \begin{bmatrix}
                                          -\frac{1}{s}\zetahat' \\
                                          \etahat'
                                        \end{bmatrix}
    &&\text{in } D^- \,.
  \end{align}  
\end{subequations}

\begin{remark}
  \label{rem:GeneralTransmissionFunctions}
  For later reference we note that 
  \eqref{eq:DefEtahatprimeZetahatprime}--\eqref{eq:ExplRepDomainDerivativeLD} 
  and therefore also the transmission 
  problem~\eqref{eq:TransmissionProblemFDLD} still make sense when we 
  replace~$(\uthat,\ushat) \in H^1(D^-)\times H^1(D^+)$ by the solution 
  $(\vhat^-,\vhat^+)\in H^1(D^-)\times H^1(D^+)$ of the transmission
  problem~\eqref{eq:TransmissionProblemLD} with arbitrary transmission
  functions $\etahat \in \HthreehalfdiD$ and $\zetahat \in \HhalfdiD$. 
  In this case, one has to use $\di_\nu^-\vhat^-$ and $\gamma^-\vhat^-$ instead 
  of $\di_\nu^-\uthat$ and $\gamma^-\uthat$, respectively, 
  in~\eqref{eq:DefEtahatprimeZetahatprime} for the right hand side of 
  the transmission conditions in~\eqref{eq:TransmissionProblemFDLD}.
  This generalization will be utilized in Proposition~\ref{pro:EstimatesU_h'LD} 
  and in Theorem~\ref{thm:CharacterizationTemporalDomainDerivativeTD} in
  order to obtain the continuous extensions in \eqref{eq:ContinuousExtensionG} 
  and \eqref{eq:ContinuousExtensionGinfty}.~\hfill$\lozenge$
\end{remark}

Next we discuss the domain derivative of the shifted far field
pattern $\uinfty_R$ from \eqref{eq:ShiftedFarfieldPatternLD}. 
Similar to \eqref{eq:OperatorFhatD}, we define for any $\xi\in\Sd$ the 
non-linear operator
\begin{equation}
  \label{eq:OperatorFhatDinfty}
  \Fhat_{D,R}^\infty :\;
  \Dcal(\Fhat_{D,R}^\infty) \subset C^1(\di D,\Rd) \to \C \,,
  \qquad \Fhat_{D,R}^\infty(h) \,:=\, \uinftyhat_{R,h}(\xi) \,,
\end{equation}
where $\uinftyhat_{R,h} \in L^2(\Sd)$ denotes the shifted far field
pattern of the solution of the transmission
problem~\eqref{eq:TransmissionProblemLD} with~$D$ replaced by $D_h$
and with~$\etahat = - \gamma\uihat|_{\di D_h}$ and 
  $\zetahat = -\di_\nu\uihat|_{\di D_h}$, as defined 
in~\eqref{eq:ShiftedFarfieldPatternLD}. 
Here, $\Dcal(\Fhat_{D,R}^\infty)$ coincides with $\Dcal(\Fhat_D)$ 
from \eqref{eq:DefDomainFhatD}.

\begin{corollary}
  \label{cor:FrechetDerivativeLD}
  The operator $\Fhat_{D,R}^\infty$ is Fr\'echet differentiable at 
  zero.
  The Fr\'echet derivative is given
  by~$(\Fhat_{D,R}^\infty)'(0)h = (\uhat_h')_R^\infty(\xi)$, where
  $(\uhat_h')_R^\infty \in L^2(\Sd)$ denotes the shifted far field 
  pattern 
  \begin{equation}
    \label{eq:DomainDerivativeFarfield}
    (\uhat_h')_R^\infty(\xi,s)
    \,:=\, \frac{1}{4\pi} \rme^{-sR} \int_{\di D}
    \rme^{s \xi\cdot y} 
    \bigl(
    (\nu_y\cdot\xi) s(\gamma^+\uhat_h')(y,s) 
    - (\di_\nu^+ \uhat_h')(y,s) \bigr) \ds_y \,,
  \end{equation}
  of the domain derivative in the Laplace domain $\uhat_h'$ from
  Proposition~\ref{pro:FrechetDerivativeLD}.
  The function $(\uhat_h')_R^\infty$ is called the domain derivative
  of the shifted far field pattern in the Laplace domain.
  We have
  \begin{equation}
    \label{eq:EstFrechetDerivativeFarfieldLDpointwise}
    \bigl| \Fhat_{D,R}^\infty(h) - \Fhat_{D,R}^\infty(0)
    - (\Fhat_{D,R}^\infty)'(0)h \bigr|
    \,\leq\, C_\sigma \rme^{-\eps \real(s)}
    \frac{|s|^4}{\real(s)^3} \|\uihat\|_{H^1(\BRtilde\setminus\ol{\BR})}
    \|h\|_{C^1(\di D)}^2     
  \end{equation}
  for all $s\in\C_+$ with $\real(s)\geq \sigma>0$ and for any $\Rtilde>R$. 
\end{corollary}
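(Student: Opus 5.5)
The plan is to repeat the argument behind the pointwise estimate \eqref{eq:EstFrechetDerivativeLDpointwise} in Proposition~\ref{pro:FrechetDerivativeLD}, but to evaluate the far field functionals instead of the potentials at a point $z$. With the cut-off $\chi$, the functions $\what=\ushat+\chi\uihat$, $\what_h=\ushat_h+\chi\uihat$, the transformed field $\wtilde_h=\what_h\circ\varphi$ and $\What=\uhat_h'+h\cdot\nabla\what$ all as in that proof, the first step is to note the following. Since the extension of $h$ is supported in a neighbourhood of $\di D$ inside $B_{R-\eps}(0)$, the fields $\wtilde_h$ and $\ushat_h$ coincide outside $B_{R-\eps}(0)$. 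Likewise $\What$ and $\uhat_h'$ coincide there, and $\chi\uihat$ is the same in all three terms. Hence the difference $\ushat_h-\ushat-\uhat_h'$ equals $\wtilde_h-\what-\What$ in $\Rd\setminus\ol{B_{R-\eps}(0)}$, and it is a radiating solution of $\Delta v-s^2v=0$ there.

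The second step uses the fact that far field patterns can be computed from Cauchy data on any closed surface enclosing the scatterer. I would apply the representation \eqref{eq:ShiftedFarfieldPatternLD}, obtained by letting $|x|\to\infty$ in Green's representation formula, on $\di B_{R-\eps}(0)$ rather than on $\di D$. This gives
\begin{equation*}
  \Fhat_{D,R}^\infty(h)-\Fhat_{D,R}^\infty(0)-(\Fhat_{D,R}^\infty)'(0)h
  \,=\, \frac{1}{4\pi}\rme^{-sR}\int_{\di B_{R-\eps}(0)} \rme^{s\xi\cdot y}
  \bigl( (\nu_y\cdot\xi)\, s\,\gamma^+ v - \di_\nu^+ v\bigr)\ds_y \,,
\end{equation*}
where $v:=\wtilde_h-\what-\What$. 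In the same way \eqref{eq:DomainDerivativeFarfield} is the far field of $\uhat_h'$, and the identity follows by linearity.

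The third step estimates the right-hand side as in Proposition~\ref{pro:EstFarfieldPatternPointwiseLD}. On $\di B_{R-\eps}(0)$ we have $|\rme^{-s(R-\xi\cdot y)}|\le \rme^{-\eps\real(s)}$, which yields the prefactor $\rme^{-\eps\real(s)}$. The trace and normal derivative are bounded via continuity of $\gamma^+$, \cite[Lmm.~4.5]{BanSay22} and \eqref{eq:EquivalenceSnorm}, which gives
\begin{equation*}
  |s|\,\|\gamma^+ v\|_{H^{\frac12}} + \|\di_\nu^+ v\|_{H^{-\frac12}}
  \,\le\, C_\sigma |s|\,\snorm v\snorm_{\Rd\setminus\ol{B_{R-\eps}(0)},s} \,.
\end{equation*}
Here $|s|\,\|v\|_{H^1}\le C_\sigma|s|\snorm v\snorm$ because $\min(1,|s|)\ge C_\sigma$ for $\real(s)\ge\sigma$, and the normal derivative term contributes at most $|s|^{1/2}\snorm v\snorm$. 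Inserting \eqref{eq:EstMaterialDerivative} then gives the bound $|s|\cdot|s|^3/\real(s)^3=|s|^4/\real(s)^3$, which is exactly \eqref{eq:EstFrechetDerivativeFarfieldLDpointwise}. The exponent is better than $9/2$ because, unlike in the pointwise case, no factor $|s|^{1/2}$ comes from the double-layer kernel.

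Fréchet differentiability at zero follows from this quadratic remainder bound. It remains to check that $h\mapsto(\uhat_h')^\infty_R(\xi)$ is linear and bounded. Linearity is clear from \eqref{eq:TransmissionProblemFDLD}, and boundedness follows from Lemma~\ref{lmm:StabilityUshat} applied with the data \eqref{eq:DefEtahatprimeZetahatprime}, combined with Proposition~\ref{pro:EstFarfieldPatternPointwiseLD}. The step needing the most care is the first one: justifying that the far field may be computed on $\di B_{R-\eps}(0)$, and that $v$ coincides with the true remainder outside that sphere. This depends on the support of the extended $h$ and on $\eps$ being chosen as in the proof of Proposition~\ref{pro:FrechetDerivativeLD}.
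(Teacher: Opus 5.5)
Your proposal is correct and follows the paper's proof essentially step for step. You transfer the far field integrals to $\di B_{R-\eps}(0)$ via Green's formula, identify the remainder there with $\wtilde_h-\what-\What$, and bound $|s|\,\|\gamma v\|_{H^{\frac12}}+\|\di_\nu v\|_{H^{-\frac12}}$ by $C_\sigma|s|\snorm v\snorm$ before inserting \eqref{eq:EstMaterialDerivative}; your extra check that $h\mapsto(\uhat_h')^\infty_R(\xi)$ is linear and bounded is left implicit in the paper, and it additionally needs the $H^2$-bound from Proposition~\ref{pro:ScatteringProblemLD_H2estimate} to control $\zetahat'$ and $\etahat'$ by $\|h\|_{C^1(\di D)}$, as done in Proposition~\ref{pro:EstimatesU_h'LD}.
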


\begin{proof}
  Using \eqref{eq:ShiftedFarfieldPatternLD} and
  \eqref{eq:DomainDerivativeFarfield} we find that
  \begin{equation*}
    \begin{split}
      \bigl|& \Fhat_{D,R}^\infty(h) - \Fhat_{D,R}^\infty(0)
      - (\Fhat_{D,R}^\infty)'(0)h \bigr|
      \,=\, 
      \bigl| \uinftyhat_{h,R}(\xi) - \uinftyhat_R(\xi)
      - (\uhat_h')_R^\infty(\xi) \bigr| \\
      &\,=\, \biggl| \frac{1}{4\pi} \rme^{-sR} \int_{\di B_{R-\eps}(0)}
      \rme^{s \xi\cdot y} 
      \Bigl( (\nu_y\cdot\xi) s
      \bigl(\gamma (\uhat_h-\uhat-\uhat_h')\bigr)(y) 
      - \bigl(\di_{\nu_y} (\uhat_h-\uhat-\uhat_h')\bigr)(y) \Bigr) \ds_y
      \biggr| \,.
    \end{split}
  \end{equation*}
  In the second step we used Green's formula to transfer the integral 
  representations of the far field patterns $\uinftyhat_{h,R}$, 
  $\uinftyhat_R$, and $(\uhat_h')_R^\infty$ from $\di D_h$ and $\di D$, 
  respectively, to~$\di B_{R-\eps}(0)$. 
  Next we use again the notation from the proof of 
  Proposition~\ref{pro:FrechetDerivativeLD}.
  Since $\gamma (\uthat_h-\uthat-\uhat_h')=\gamma (\wtilde_h-\what-\What)$ on
  $\di B_{R-\eps}(0)$, we obtain that
  \begin{equation*}
    \begin{split}
      \bigl|& \uinftyhat_{h,R}(\xi) - \uinftyhat_R(\xi)
      - (\uhat_h')_R^\infty(\xi) \bigr| \\
      &\,\leq\, C \rme^{-\eps\real(s)} \Bigl( 
      |s|
      \bigl\|\gamma(\wtilde_h-\what-\What)\bigr\|_{H^{\frac12}(\di B_{R-\eps})} 
      + \bigl\| \di_{\nu_y}
      (\wtilde_h-\what-\What)\bigr\|_{H^{-\frac12}(\di B_{R-\eps})}
      \Bigr) \,.
    \end{split}
  \end{equation*}
  Applying \cite[Lmm.~4.5]{BanSay22}, the continuity of
  the trace operator $\gamma^+$, \eqref{eq:EquivalenceSnorm} as well
  as ~\eqref{eq:EstMaterialDerivative}, we obtain similar to the last
  step in the proof of Theorem~\ref{pro:FrechetDerivativeLD} that 
  \begin{equation*}
    \begin{split}
      \bigl| \uinftyhat_{h,R}(\xi) - \uinftyhat_R(\xi)
      - (\uhat_h')_R^\infty(\xi) \bigr|
      \,\leq\, C_\sigma \rme^{-\eps\real(s)}
      \frac{|s|^4}{\real(s)^3}
      \|\uihat\|_{H^1(\BRtilde\setminus\ol{\BR})} \|h\|_{C^1(\di D)}^2 \,.
    \end{split}
  \end{equation*}
  This shows \eqref{eq:EstFrechetDerivativeFarfieldLDpointwise}. 
\end{proof}

From \eqref{eq:ExplRepDomainDerivativeLD} we immediately obtain the
following explicit representation of the shifted far field pattern of
the domain derivative, 
\begin{equation}
  \label{eq:ExplRepDomainDerivativeFarfieldLD}
  (\uhat_h')^\infty_R
  \,=\, - \begin{bmatrix}
    s\SL^\infty_R(s) & \DL^\infty_R(s)
  \end{bmatrix} \Ccal^{-1}(s) \Acal(s) \begin{bmatrix}
    -\frac{1}{s}\zetahat' \\
    \etahat'
  \end{bmatrix}
  \qquad \text{on } \Sd \,,
\end{equation}
where $\etahat'$ and $\zetahat'$ are as 
in~\eqref{eq:DefEtahatprimeZetahatprime} with $\gamma^-\uthat$ 
and $\di_\nu^-\uthat$ denoting the trace and the normal 
derivative of the solution to the transmission 
problem~\eqref{eq:TransmissionProblemLD} with 
$\etahat = - \gamma\uihat$ and $\zetahat = -\di_\nu\uihat$.
This representation and the associated integral
equation will be used for the numerical implementation of the temporal
domain derivative in Section~\ref{sec:NumericalExamples} below. 

In the next proposition we establish bounds on the domain
derivative $\uhat_h'$ and on its shifted far field pattern
$(\uhat_h')^\infty_R$ that are explicit with respect to the
parameter~$s$.
These will be required to characterize the temporal domain derivative
in Theorem~\ref{thm:CharacterizationTemporalDomainDerivativeTD} below. 
We consider the case of general transmission functions as explained in 
Remark~\ref{rem:GeneralTransmissionFunctions}.

\begin{proposition}
  \label{pro:EstimatesU_h'LD}
  Suppose $\real(s)\geq\sigma>0$, $h\in C^1(\di D,\Rd)$, 
  and let $(\vhat^-,\vhat^+) \in H^1(D^-)\times H^1(D^+)$ be the 
  solution of the transmission problem~\eqref{eq:TransmissionProblemLD} 
  with arbitrary transmission functions $\etahat \in \HthreehalfdiD$ 
  and~$\zetahat \in \HhalfdiD$.
  We consider the associated solution $\uhat_h'\in H^1(D^-)\times H^1(D^+)$ 
  of the transmission problem~\eqref{eq:TransmissionProblemFDLD}, where 
  the right hand sides of the transmission conditions are given 
  by~\eqref{eq:DefEtahatprimeZetahatprime} with~$\di_\nu^-\vhat^-$ 
  and $\gamma^-\vhat^-$ instead of $\di_\nu^-\uthat$ and 
  $\gamma^-\uthat$, respectively.
  Then,
  \begin{equation}
    \label{eq:BoundUhat'H1LD}
    \|\uhat_h'\|_{H^1(D^\pm)}
    \,\leq\, C_\sigma \|h\|_{C^1(\di D)}
    \frac{|s|^5}{\real(s)^{2}}
    \Bigl( \| \zetahat \|_{\HhalfdiD}^2 
    + \| \etahat \|_{H^{\frac32}(\di D)}^2 \Bigr)^{\frac12} \,,
    \qquad \real(s)\geq\sigma>0 \,,
  \end{equation}
  and
  \begin{equation}
    \label{eq:BoundUhat'L2LD}
    \|\uhat_h'\|_{L^2(D^\pm)}
    \,\leq\, C_\sigma
    \|h\|_{C^1(\di D)}
    \frac{|s|^4}{\real(s)^{2}}
    \Bigl( \| \zetahat \|_{\HhalfdiD}^2 
    + \| \etahat \|_{H^{\frac32}(\di D)}^2 \Bigr)^{\frac12} \,,
    \qquad \real(s)\geq\sigma>0 \,.
  \end{equation}
  Furthermore, for any $z\in\Rd\setminus\ol{\BR}$ and 
  $\real(s)\geq\sigma>0$, we have the pointwise estimate 
  \begin{equation}
    \label{eq:BoundUhat'PointwiseLD}
    |\uhat_h'(z)|
    \,\leq\, C_\sigma \|h\|_{C^1(\di D)}
    \frac{\rme^{-\dist(x,\di D)\real(s)}}{\dist(x,\di D)}
    \frac{|s|^\frac{13}{2}}{\real(s)^{2}}
    \Bigl( \| \zetahat \|_{\HhalfdiD}^2 
    + \| \etahat \|_{H^{\frac32}(\di D)}^2 \Bigr)^{\frac12} \,.
  \end{equation}
  The shifted far field pattern of the domain derivative satisfies,
  for any $\xi\in\Sd$ and $\real(s)\geq\sigma>0$, 
  \begin{equation}
    \label{eq:BoundUinftyhat'PointwiseLD}
    \bigl| (\uhat_h')_R^\infty(\xi) 
    \bigr| \,\leq\,
    C_\sigma \|h\|_{C^1(\di D)} \rme^{-\real(s)(R-\delta_D(\xi))}
    \frac{|s|^6}{\real(s)^{2}}
    \Bigl( \| \zetahat \|_{\HhalfdiD}^2 
    + \| \etahat \|_{H^{\frac32}(\di D)}^2 \Bigr)^{\frac12} \,.
  \end{equation}
\end{proposition}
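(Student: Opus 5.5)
The plan is to treat $\uhat_h'$ as the solution of a transmission problem of the form \eqref{eq:TransmissionProblemLD}, with data $(\etahat',\zetahat')$ from \eqref{eq:DefEtahatprimeZetahatprime}, so that all earlier stability results apply once the data norms are bounded. In a first step I bound these data in terms of the $H^2$ norm of $\vhat^-$. Since $\di D$ is $C^2$ and $h\in C^1$, multiplication by $h_\nu$ is bounded on $\HhalfdiD$ with constant $C\|h\|_{C^1(\di D)}$. The trace theorem then gives
\begin{equation*}
  \|\etahat'\|_{\HhalfdiD} \,\leq\, C\|h\|_{C^1(\di D)}\|\di_\nu^-\vhat^-\|_{\HhalfdiD} \,\leq\, C\|h\|_{C^1(\di D)}\|\vhat^-\|_{H^2(D^-)} \,.
\end{equation*}
For $\zetahat'\in\HminhalfdiD$ I handle the two terms separately. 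The term $s^2h_\nu\gamma^-\vhat^-$ is bounded by $|s|^2\|h\|_{C^1}\|\gamma^-\vhat^-\|_{\HhalfdiD}$. The divergence term is handled by duality: $\sdiv$ maps $H^{\frac12}$ tangential fields to $\HminhalfdiD$, and $\sgrad\gamma^-\vhat^-\in\HhalfdiD$ is controlled by $\|\vhat^-\|_{H^2(D^-)}$. Altogether
\begin{equation*}
  \|\zetahat'\|_{\HminhalfdiD}+\|\etahat'\|_{\HhalfdiD} \,\leq\, C\|h\|_{C^1(\di D)}\,|s|^2\,\|\vhat^-\|_{H^2(D^-)} \,.
\end{equation*}
Only $H^{\frac12}$-regularity of $\etahat'$ is available, which is why the $H^1$-level estimates are used below rather than Proposition~\ref{pro:ScatteringProblemLD_H2estimate} again.

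In the second step I combine this with Proposition~\ref{pro:ScatteringProblemLD_H2estimate}, which gives
\begin{equation*}
  \|\vhat^-\|_{H^2}\,\leq\, C_\sigma |s|^{\frac52}\real(s)^{-\frac12}\bigl(\|\zetahat\|_{\HhalfdiD}^2+\|\etahat\|_{\HthreehalfdiD}^2\bigr)^{\frac12} \,.
\end{equation*}
The data of the derivative problem are therefore of size $|s|^{\frac92}\real(s)^{-\frac12}$. Lemma~\ref{lmm:StabilityUshat} contributes a further factor $|s|^{\frac32}/\real(s)$ in the energy norm. Using \eqref{eq:EquivalenceSnorm} with $|s|\geq\sigma$, we have $\|\cdot\|_{H^1}\leq C_\sigma\snorm\cdot\snorm$ and $\|\cdot\|_{L^2}\leq |s|^{-1}\snorm\cdot\snorm$. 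Collecting all factors gives \eqref{eq:BoundUhat'H1LD} with power $|s|^{6}/\real(s)^{3/2}$.

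This is the step I expect to be the main obstacle: the direct chaining does not match the stated $|s|^5/\real(s)^2$, so the bookkeeping has to be finer. Since $\real(s)\leq|s|$, the stated bound is equivalent to showing that the derivative data carry only $|s|^{\frac72}\real(s)^{-1}$ rather than $|s|^{\frac92}\real(s)^{-\frac12}$. I would try to save a power of $|s|$ in the $s^2 h_\nu\gamma^-\vhat^-$ term. The idea is to use the energy bound $\snorm\vhat^-\snorm_{D^-,s}\lesssim |s|^{\frac32}/\real(s)$ directly, together with a weighted trace estimate
\begin{equation*}
  \|\gamma^-\vhat^-\|_{\HhalfdiD}\,\lesssim\, \|\vhat^-\|_{H^1} \,\lesssim\, |s|^{-1}\snorm\vhat^-\snorm \quad\text{for the lower order part,}
\end{equation*}
and to reserve the $H^2$ estimate for the $\sdiv$ term and for $\etahat'$. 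Alternatively, I would test the weak formulation of \eqref{eq:TransmissionProblemFDLD} with $\uhat_h'$ itself and apply the coercivity \eqref{eq:coerc-a}, pairing $\zetahat'$ against $\gamma\uhat_h'$ in an $s$-weighted duality instead of through Lax--Milgram with a lifting. The $L^2$ bound \eqref{eq:BoundUhat'L2LD} then follows with one power of $|s|$ less, from $\|s\,\uhat_h'\|_{L^2}\leq\snorm\uhat_h'\snorm$.

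For the pointwise bound \eqref{eq:BoundUhat'PointwiseLD} I proceed as in Corollary~\ref{cor:LDpwbound}. I write $\uhat_h'=-\SL(s)\di_\nu^+\uhat_h'+\DL(s)\gamma^+\uhat_h'$ in $D^+$ and apply Lemma~\ref{lmm:PointwiseEstSLDLLD}. The traces are estimated via the trace theorem and \cite[Lmm.~4.5]{BanSay22}, which together cost a factor $|s|^{\frac32}$ times the energy norm; combined with the energy bound this gives the power $|s|^{\frac{13}{2}}$. For the far field bound \eqref{eq:BoundUinftyhat'PointwiseLD} I repeat the proof of Proposition~\ref{pro:EstFarfieldPatternPointwiseLD} with the representation \eqref{eq:DomainDerivativeFarfield}. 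Here the factor $|s|\,\|\gamma^+\uhat_h'\|+\|\di_\nu^+\uhat_h'\|$ is bounded by $|s|$ times the $H^1$ bound, and the exponential $\rme^{-\real(s)(R-\delta_D(\xi))}$ comes from $|\rme^{-sR}\rme^{s\xi\cdot y}|$ as before. This yields $|s|^6/\real(s)^2$.
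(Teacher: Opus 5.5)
Your overall structure matches the paper's proof. You view $\uhat_h'$ as a solution of \eqref{eq:TransmissionProblemLD} with data $(\etahat',\zetahat')$, apply the energy estimate of Lemma~\ref{lmm:StabilityUshat}, and then use Green's representation with Lemma~\ref{lmm:PointwiseEstSLDLLD} and the far field formula. The powers $|s|^{13/2}$ and $|s|^6$ you obtain in the last two steps are the ones the paper obtains, by the same computation. You also correctly diagnose that routing $s^2h_\nu\gamma^-\vhat^-$ through $\|\vhat^-\|_{H^2(D^-)}$ loses a factor $|s|\real(s)^{1/2}$.

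However, the estimate you propose for the repair is false:
\begin{equation*}
\|\gamma^-\vhat^-\|_{\HhalfdiD}\lesssim\|\vhat^-\|_{H^1(D^-)}\lesssim|s|^{-1}\snorm\vhat^-\snorm_{D^-,s}.
\end{equation*}
The gradient part of $\snorm\cdot\snorm_{D^-,s}$ carries no weight in $s$, so only the $L^2$ part gains $|s|^{-1}$, whereas the $H^{\frac12}$-trace needs the full $H^1$ norm. What \eqref{eq:EquivalenceSnorm} actually gives for $|s|\geq\sigma$ is only $\|\vhat^-\|_{H^1(D^-)}\leq C_\sigma\snorm\vhat^-\snorm_{D^-,s}$.

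No weight is needed, though. With the correct inequality, your first suggested fix is exactly the paper's argument. By \eqref{eq:StabilityUshat},
\begin{equation*}
|s|^2\|h_\nu\gamma^-\vhat^-\|_{\HhalfdiD}
\,\leq\, C_\sigma\|h\|_{C^1(\di D)}\,|s|^2\,\snorm\vhat^-\snorm_{D^-,s}
\,\leq\, C_\sigma\|h\|_{C^1(\di D)}\frac{|s|^{\frac72}}{\real(s)}\Bigl(\|\zetahat\|_{\HhalfdiD}^2+\|\etahat\|_{\HthreehalfdiD}^2\Bigr)^{\frac12}.
\end{equation*}
This improves on the $H^2$-route by precisely the missing factor $|s|\real(s)^{1/2}$. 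The $\sdiv$-term and $\etahat'$ still go through Proposition~\ref{pro:ScatteringProblemLD_H2estimate}, which gives $|s|^{5/2}\real(s)^{-1/2}\leq\sigma^{-1/2}|s|^{7/2}\real(s)^{-1}$, with the same factor $\|h\|_{C^1(\di D)}(\cdots)^{1/2}$. Hence $\|\zetahat'\|_{\HminhalfdiD}+\|\etahat'\|_{\HhalfdiD}$ is of order $|s|^{7/2}/\real(s)$. Multiplying by the factor $|s|^{3/2}/\real(s)$ from the energy estimate gives \eqref{eq:BoundUhat'H1LD}, and everything downstream of it in your proposal then goes through. The $s$-weighted duality alternative you mention is not needed.
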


\begin{proof}
  We use again the notations $\etahat'$ and $\zetahat'$ for the right hand
  sides of the transmission conditions in~\eqref{eq:TransmissionProblemFDLD}. 
  Proceeding as in the proof of Lemma~\ref{lmm:StabilityUshat} we obtain that
  \begin{equation}
    \label{eq:ProofEstDDLD2}
    \snorm \uhat_h' \snorm_{D^-,s} + \snorm \uhat_h' \snorm_{D^+,s}
    \,\leq\, C_\sigma \frac{|s|^{\frac32}}{\real(s)}
    \Bigl( \| \zetahat' \|_{\HminhalfdiD} + \|\etahat'\|_{\HhalfdiD} \Bigr) \,.
  \end{equation}

  Using \eqref{eq:DefEtahatprimeZetahatprime} and the continuity of
  $\di_\nu^-:H^2(D)\to\HhalfdiD$, we find that
  \begin{equation*}
      \|\etahat'\|_{\HhalfdiD}
      = \Bigl\|  h_\nu \Bigl(1- \frac{1}{\rho_0} \Bigr) \di_\nu^-\vhat^- \Bigr\|_{\HhalfdiD} 
      \leq C \|h\|_{C^1(\di D)} \| \di_\nu^-\vhat^- \|_{\HhalfdiD} 
      \leq C \|h\|_{C^1(\di D)} \| \vhat^- \|_{H^2(D)} \,. 
  \end{equation*}
  Applying \eqref{eq:ScatteringProblemLD_H2estimate} gives
  \begin{equation}
    \label{eq:ProofEstDDLD3}
    \|\etahat'\|_{\HhalfdiD}
    \,\leq\, C_\sigma \|h\|_{C^1(\di D)} 
    \frac{|s|^\frac{5}{2}}{\real(s)^{\frac12}}
    \Bigl( \|\zetahat\|_{\HhalfdiD}^2
    + \|\etahat\|_{\HthreehalfdiD}^2 \Bigr)^{\frac12} 
  \end{equation}
  for $s\in\C_+$ with $\real(s)\geq\sigma>0$.
  Similarly,
  \begin{equation*}
    \begin{split}
      \| \zetahat' \|_{\HminhalfdiD}
      &\,=\, \Bigl\| s^2 h_\nu
      \Bigl( 1 - \frac{1}{\rho_0 c_0^2} \Bigr) \gamma\vhat^-
      + \sdiv \Bigl( h_\nu \Bigl( 1-\frac{1}{\rho_0} \Bigr) \sgrad
      \gamma \vhat^- \Bigr) \Bigr\|_{\HminhalfdiD} \\
      &\,\leq\, C |s|^2 \|h\|_{C^1(\di D)} \|\gamma\vhat^-\|_{\HhalfdiD}
      + C \|h\|_{C^1(\di D)} \|\gamma\vhat^-\|_{H^{\frac32}(\di D)} \\
      &\,\leq\, C  \|h\|_{C^1(\di D)} \bigl(
      |s|^2 \|\vhat^-\|_{H^1(D^-)} + \|\vhat^-\|_{H^2(D^-)} \bigr) \,,
    \end{split}
  \end{equation*}
  and \eqref{eq:EquivalenceSnorm}, \eqref{eq:StabilityUshat},
  and~\eqref{eq:ScatteringProblemLD_H2estimate} give 
  \begin{equation}
    \label{eq:ProofEstDDLD4}
    \begin{split}
      \| \zetahat' \|_{\HminhalfdiD}
      &\,\leq\, C_\sigma  \|h\|_{C^1(\di D)}
      \frac{|s|^\frac{7}{2}}{\real(s)}
      \Bigl( \|\zetahat\|_{\HhalfdiD}^2
      + \|\etahat\|_{\HthreehalfdiD}^2 \Bigr)^{\frac12} \,. 
    \end{split}
  \end{equation}
  Substituting \eqref{eq:ProofEstDDLD3} and \eqref{eq:ProofEstDDLD4}
  into \eqref{eq:ProofEstDDLD2} yields
  \begin{equation}
    \label{eq:ProofEstDDLD5}
    \snorm \uhat_h' \snorm_{D^-,s} + \snorm \uhat_h' \snorm_{D^+,s}
    \,\leq\, C_\sigma \|h\|_{C^1(\di D)}
    \frac{|s|^5}{\real(s)^2}
    \Bigl( \| \zetahat \|_{\HhalfdiD}^2 
    + \| \etahat \|_{H^{\frac32}(\di D)}^2 \Bigr)^{\frac12} \,,
  \end{equation}
  which implies~\eqref{eq:BoundUhat'H1LD}.
  The inequality~\eqref{eq:BoundUhat'L2LD} also follows immediately
  from~\eqref{eq:ProofEstDDLD5} by dropping the gradient term in
  $\snorm \uhat_h' \snorm_{D^\pm,s}$ and dividing the
  resulting inequality by $|s|$. 

  To show \eqref{eq:BoundUhat'PointwiseLD} we observe that
    \cite[Lmm.~4.5]{BanSay22} gives
    \begin{equation}
      \label{eq:ProofEstDDLD1a}
      \begin{split}
        \|\di_\nu^+\uhat_h'\|_{\HminhalfdiD} 
        &\,\leq\, C \max(1,|s|^{-\frac12}) |s|^{\frac12} 
        \snorm \uhat_h' \snorm_{D^+,s} \,.
      \end{split}
    \end{equation}
  We combine
  \eqref{eq:ProofEstDDLD1a} and \eqref{eq:ProofEstDDLD5} to see that
  \begin{equation*}
    \|\di_\nu^+\uhat_h'\|_{\HminhalfdiD}
    \,\leq\, C_\sigma \|h\|_{C^1(\di D)}
    \frac{|s|^{\frac{11}{2}}}{\real(s)^2}
    \Bigl( \| \zetahat \|_{\HhalfdiD}^2 
    + \| \etahat \|_{H^{\frac32}(\di D)}^2 \Bigr)^{\frac12} \,. 
  \end{equation*}
  Moreover, using the continuity of the trace operator
  $\gamma^-: H^1(D) \to \HhalfdiD$ and \eqref{eq:EquivalenceSnorm}, 
  and combining the result with \eqref{eq:ProofEstDDLD5}, we obtain 
  \begin{equation*}
    \begin{split}
      \|\gamma^+\uhat_h'\|_{\HhalfdiD}
      &\,\leq\, C \|\uhat_h'\|_{H^1(D^+)}
      \,\leq\, C \max(1,|s|^{-1}) \snorm \uhat_h' \snorm_{D^+,s} \\
      &\,\leq\, C_\sigma \|h\|_{C^1(\di D)}
      \frac{|s|^5}{\real(s)^2}
      \Bigl( \| \zetahat \|_{\HhalfdiD}^2 
      + \| \etahat \|_{H^{\frac32}(\di D)}^2 \Bigr)^{\frac12} \,.
    \end{split}
  \end{equation*}
  Using Green's representation theorem and \eqref{eq:PointwiseEstSLDLLD},
  we finally arrive at
  \begin{equation*}
    \begin{split}
      |\uhat_h'(z)|
      &\,\leq\, \bigl| -\bigl(\SL(s)(\di_\nu^+\uhat_h')\bigr)(z)
      + \bigl(\DL(s)(\gamma^+\uhat_h')\bigr)(z) \bigr| \\
      &\,\leq\,  C_\sigma
      \frac{\rme^{-\dist(x,\di D)\real(s)}}{\dist(x,\di D)}
      \frac{|s|^{\frac{13}{2}}}{\real(s)^2}
      \Bigl( \| \zetahat \|_{\HhalfdiD}^2 
      + \| \etahat \|_{H^{\frac32}(\di D)}^2 \Bigr)^{\frac12} \,. 
    \end{split}
  \end{equation*}

  To see \eqref{eq:BoundUinftyhat'PointwiseLD}, we use
  \eqref{eq:DomainDerivativeFarfield}, \cite[Lmm.~4.5]{BanSay22}, the
  continuity of the trace operator $\gamma^+$,
  \eqref{eq:EquivalenceSnorm}, and~\eqref{eq:ProofEstDDLD5} to estimate 
  \begin{equation*}
    \begin{split}
      \bigl| (\uhat_h')_R^\infty(\xi) \bigr|
      &\,\leq\, C \rme^{-\real(s)(R-\delta_D(\xi))} 
      \Bigl(
      |s|\|\gamma^+\uhat_h'\|_{\HhalfdiD}
      + \|\di_{\nu}^+ \uhat_h'\|_{\HminhalfdiD} \Bigr) \\
      &\,\leq\, C_\sigma \|h\|_{C^1(\di D)}
      \rme^{-\real(s)(R-\delta_D(\xi))}
      \frac{|s|^6}{\real(s)^2}
      \Bigl( \| \zetahat \|_{\HhalfdiD}^2 
      + \| \etahat \|_{H^{\frac32}(\di D)}^2 \Bigr)^{\frac12} \,.
    \end{split}
  \end{equation*}
\end{proof}

\section{Scattering problem and domain derivative in the time domain}
\label{sec:TimeDomain}
To translate the results obtained in
Sections~\ref{sec:ScatteringProblemLD}
and~\ref{sec:DomainDerivativeLD} from the Laplace domain into the time
domain we recall some basic facts about analytic families of linear
operators and their Laplace transform; see, e.g.,
\cite{BanSay22,Lub94,Tre75} for further details.  
Throughout this section, we fix a final observation time~$T>0$.

Suppose that $K(s):X\to Y$, $\real(s)\geq \sigma > 0$, is an analytic
family of bounded linear operators between two Hilbert spaces $X$ and
$Y$, which is bounded by 
\begin{equation*}
  \| K(s) \|
  \,\leq\, C_\sigma \frac{|s|^\mu}{\real(s)^\nu} \,, \qquad
  \real(s) \geq \sigma > 0 \,,
\end{equation*}
for some $\mu\in\R$ and~$\nu\geq 0$ with respect to the operator
norm. 
Then $K(s)$ is the Laplace transform of a bounded linear convolution
operator $K(\di_t): H^{r+\mu}_0(0,T;X) \to H^r_0(0,T;Y)$ for arbitrary
$r\in\R$, with
\begin{equation*}
  \Lcal\bigl( K(\di_t)g \bigr)(s)
  \,=\, K(s) (\Lcal g)(s) \,, \qquad \real(s) \geq \sigma > 0 \,.
\end{equation*}
Here we use the usual Heaviside operational notation for the
convolution operator in the time domain; see,
e.g.,~\cite{BanSay22,Lub94}.
Moreover, the composition rule
\begin{equation*}
  K_1(\di_t) K_2(\di_t) g
  \,=\, (K_1 K_2)(\di_t) g
\end{equation*}
holds for any two such families of operators $K_1(s):Y \to Z$
and $K_2(s):X \to Y$ with Hilbert spaces~$X, Y$, and $Z$.

\subsection{Scattering problem in the time domain}
\label{subsec:ScatteringProblemTimeDomain}
From the integral representations and estimates developed in
Section~\ref{sec:ScatteringProblemLD} we immediately infer the
following representations and well-posedness results in the time
domain. 

\begin{theorem}
  \label{thm:WellPosednessScatterinProblemTD}
  Let $r\in\R$, and let $\ui$ be an incident 
  plane wave as in~\eqref{eq:PlaneWave} with 
  $f^i \in H^{r+5}_c(\R)$, which satisfies~\eqref{eq:WaveEquationUi}.
  Accordingly, we obtain
  \begin{equation*}
   (\di_\nu\ui,\gamma\ui) 
   \in H^{r+3}_0\bigl(0,T;\HhalfdiD\times\HthreehalfdiD\bigr) 
   \cap H^{r+4}_0\bigl(0,T;\HminhalfdiD\times\HhalfdiD\bigr) \,.   
  \end{equation*}
  The unique solution of the time-dependent scattering
  problem~\eqref{eq:TransmissionProblemTD} with homogeneous initial
  conditions~$\us(\ph,0) = \dit\us(\ph,0) = 0$ in $D^+$ and
  $\ut(\ph,0) = \dit\ut(\ph,0) = 0$ in $D^-$ is given by 
  \begin{subequations}
    \label{eq:ExplRepSolnTransmissionProblemTD}
  \begin{align}
    \us
    &\,=\, - \begin{bmatrix}
      \di_t\SL(\di_t) & \DL(\di_t)
    \end{bmatrix} \Ccal^{-1}(\di_t) \Acal(\di_t) \begin{bmatrix}
      \di_t^{-1}\di_\nu\ui \\
      -\gamma\ui 
    \end{bmatrix} 
    &&\text{in } D^+ \times (0,\infty) \,, \label{eq:usinDp}\\
    \ut 
    &\,=\, \begin{bmatrix}
      \rho_0 \di_t \SL(c_0\di_t) & \DL(c_0\di_t) 
    \end{bmatrix} \bigg(
                                   \begin{bmatrix}
                                     I & 0 \\ 0 & I
                                   \end{bmatrix}
                                                  + \Ccal^{-1}(\di_t) \Acal(\di_t) \bigg)
                                                  \begin{bmatrix}
                                                    \di_t^{-1} \di_\nu\ui \\
                                                    -\gamma\ui 
                                                  \end{bmatrix} 
    &&\text{in } D^- \times (0,\infty) \,.
  \end{align}
  \end{subequations}
  The operator
  $\Fcal: ( \di_\nu\ui \,,\, \gamma\ui  )
  \mapsto  ( \ut \,,\, \us )$ 
  has continuous extensions
  \begin{subequations}
    \label{eq:ContinuousExtensionF}
    \begin{align}
      \Fcal :\, H^{r+4}_0\bigl(0,T;\HminhalfdiD\times\HhalfdiD\bigr)
      \to H^{r+\frac52}_0\bigl(0,T;H^1(D^-)\times H^1(D^+)\bigr) \,,\\
      \Fcal :\, H^{r+3}_0\bigl(0,T;\HhalfdiD\times\HthreehalfdiD\bigr)
      \to H^{r+\frac12}_0\bigl(0,T;H^2(D^-)\times H^2(D^+)\bigr) \,.
    \end{align}
  \end{subequations}
  For any fixed $z \in \Rd\setminus\ol{\BR}$, the operator 
  $\Fcal_z: ( \di_\nu\ui \,,\, \gamma\ui )
  \mapsto  \us(z)$ has a continuous extension
  \begin{equation}
    \label{eq:ContinuousExtensionFz}
    \Fcal_z :\, H^{r+4}_0\bigl(0,T;\HminhalfdiD\times\HhalfdiD\bigr)
    \to H^{r+1}_0\bigl(0,T;\R\bigr) \,.
  \end{equation}
  Moreover, the shifted far field pattern in the time-domain 
  from~\eqref{eq:ShiftedFarfieldPatternTD} can be written as
  \begin{equation}
    \uinfty_R
    \,=\,- \begin{bmatrix}
      \di_t\SL^\infty_R(\di_t) & \DL^\infty_R(\di_t)
    \end{bmatrix} \Ccal^{-1}(\di_t) \Acal(\di_t) \begin{bmatrix}
      \di_t^{-1}\di_\nu\uihat \\
      -\gamma\uihat 
    \end{bmatrix} 
    \qquad \text{on } \Sd \times (0,\infty) \,, \label{eq:uRinfty}
  \end{equation}
  and for each $r\in\R$ the operator
  $\Fcal^\infty_\xi: ( \di_\nu\ui \,,\, \gamma\ui  )
  \mapsto  \uinfty_R(\xi,\ph)$ has a continuous extension
  \begin{equation}
    \label{eq:ContinuousExtensionFinfty}
    \Fcal^\infty_\xi:\, H^{r+4}_0\bigl(0,T;\HminhalfdiD\times\HhalfdiD\bigr)
    \to H^{r+\frac32}_0\bigl(0,T;\R\bigr) \,. 
  \end{equation}
\end{theorem}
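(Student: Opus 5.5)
The plan is to transfer the Laplace domain results of Section~\ref{sec:ScatteringProblemLD} to the time domain. For each claim I identify an analytic operator family $K(s)$, $\real(s)\geq\sigma>0$, bound it by $C_\sigma|s|^\mu/\real(s)^\nu$, and apply the operational calculus recalled at the beginning of this section. That calculus turns $K(s)$ into $K(\di_t):H^{r+\mu}_0(0,T;X)\to H^r_0(0,T;Y)$, and the composition rule then gives the time-domain representations.

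\emph{Data regularity.} Since $\ui(x,t)=\finc(t-\theta\cdot x)$, spatial derivatives of order $k$ become time derivatives of $\finc$ of order $k$. Hence $\gamma\ui\in H^{r+5-k}$ in time with values in $H^{k-1/2}(\di D)$ for $k=1,2$. Likewise $\di_\nu\ui$, which is $-(\theta\cdot\nu)\finc'$, lies in $H^{r+4}(0,T;\HminhalfdiD)\cap H^{r+3}(0,T;\HhalfdiD)$. Causality~\eqref{eq:CausalityUi} gives the vanishing for $t<0$, so these belong to the $H_0$ spaces.

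\emph{Solution operators.} For fixed $s$, define the solution map $\hat\Fcal(s):(\zetahat,\etahat)\mapsto(\vhat^-,\vhat^+)$ of~\eqref{eq:TransmissionProblemLD}. Weighing $\zetahat=-\di_\nu\uihat$ and $\etahat=-\gamma\uihat$ enters only as a sign/linear change, so it suffices to treat $\hat\Fcal(s)$.

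\begin{itemize}
\item \emph{$H^1$ bound.} By Lemma~\ref{lmm:StabilityUshat} and~\eqref{eq:EquivalenceSnorm}, $\|\hat\Fcal(s)\|_{H^1\leftarrow\Xcal'}\leq C_\sigma|s|^{3/2}\max(1,|s|^{-1})/\real(s)$. For $\real(s)\geq\sigma$ this is at most $C_\sigma|s|^{3/2}/\real(s)$. It gives $\mu=3/2$ and maps $H^{r+4}$ into $H^{r+5/2}$, which is the first extension in~\eqref{eq:ContinuousExtensionF}.
\item \emph{$H^2$ bound.} Proposition~\ref{pro:ScatteringProblemLD_H2estimate} gives $\mu=5/2$, $\nu=1/2$, and hence $H^{r+3}\to H^{r+1/2}$.
\item \emph{Point evaluation.} Corollary~\ref{cor:LDpwbound} bounds the point evaluation at $z$ by $C|s|^3$, using $\rme^{-\dist\real(s)}\leq1$. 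This gives $H^{r+4}\to H^{r+1}$, i.e.\ \eqref{eq:ContinuousExtensionFz}.
\item \emph{Far field.} Proposition~\ref{pro:EstFarfieldPatternPointwiseLD} gives $\mu=5/2$, $\nu=1$ for $\xi$ fixed, and hence $H^{r+4}\to H^{r+3/2}$, i.e.\ \eqref{eq:ContinuousExtensionFinfty}.
\end{itemize}

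Analyticity of these families in $s$ follows from analyticity of $\Phi_s$, hence of the boundary integral operators and of $\Ccal^{-1}(s)$ on $\C_+$ by Appendix~\ref{app:A}. It also follows from the Lax--Milgram construction depending holomorphically on $s$.

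\emph{Representation formulas.} By~\eqref{eq:ExplRepSolnTransmissionProblemLD} and~\eqref{eq:ExplRepFarfieldTransmissionProblemLD}, the Laplace transforms of $\us$, $\ut$ and $\uinfty_R$ are the products of operator families applied to $(\frac1s\di_\nu\uihat,-\gamma\uihat)$. Each factor is polynomially bounded: $\SL,\DL$ are covered by Lemma~\ref{lmm:PointwiseEstSLDLLD} and the standard bounds from~\cite{BanSay22}, and $\Ccal^{-1}$ by~\eqref{eq:EstInverseOpCcal}. Note also that $\frac1s$ is the transform of $\di_t^{-1}$. The composition rule therefore gives~\eqref{eq:ExplRepSolnTransmissionProblemTD} and~\eqref{eq:uRinfty} as identities of causal distributions.

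\emph{Identification with the weak solution.} It remains to show that this time-domain object is the weak solution of~\eqref{eq:TransmissionProblemTD}. Uniqueness of the weak time-domain solution (as in~\cite{KirRie14}) reduces this to checking that the inverse Laplace transform of the Laplace-domain weak solution satisfies the time-domain weak formulation. This follows from Plancherel on the line $\real(s)=\sigma$, with test functions transformed accordingly.

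\emph{Main obstacle.} I expect the hard step to be justifying the regularity exponents precisely. The operational calculus consumes $\mu$ derivatives, and one must check that each family is genuinely holomorphic with the stated bound uniformly in $\real(s)\geq\sigma$. It is also delicate for the far field, because $\rme^{-sR}$ is compensated only by $\rme^{s\delta_D(\xi)}$. I would handle this by using $R>\delta_D(\xi)$, so that the exponential factor is bounded by one, exactly as~\eqref{eq:EstFarfieldPatternPointwiseLD} already encodes.
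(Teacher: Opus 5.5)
Your proposal is correct and follows the paper's route: you take the polynomial $|s|$-exponents from Lemma~\ref{lmm:StabilityUshat}, Proposition~\ref{pro:ScatteringProblemLD_H2estimate}, Corollary~\ref{cor:LDpwbound} and Proposition~\ref{pro:EstFarfieldPatternPointwiseLD}, which hold for arbitrary transmission data. You then transfer the representations \eqref{eq:ExplRepSolnTransmissionProblemLD} and \eqref{eq:ExplRepFarfieldTransmissionProblemLD} to the time domain via the operational calculus and its composition rule; your explicit check of the data regularity and of the identification with the weak time-domain solution spells out steps the paper leaves implicit.
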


\begin{proof}
  This follows from the corresponding explicit representations in the
  Laplace domain in~\eqref{eq:ExplRepSolnTransmissionProblemLD}
  and~\eqref{eq:ExplRepFarfieldTransmissionProblemLD}, and from the
  estimates in Lemma~\ref{lmm:StabilityUshat}, Corollary~\ref{cor:LDpwbound}
  and the Propositions~\ref{pro:ScatteringProblemLD_H2estimate}
  and~\ref{pro:EstFarfieldPatternPointwiseLD}.
  For the continuous extensions in \eqref{eq:ContinuousExtensionF},
  \eqref{eq:ContinuousExtensionFz}, and 
  \eqref{eq:ContinuousExtensionFinfty} we use that
  the estimates \eqref{eq:StabilityUshat}, \eqref{eq:vhatboundpwLD},
  \eqref{eq:ScatteringProblemLD_H2estimate},
  and~\eqref{eq:EstFarfieldPatternPointwiseLD} hold for arbitrary 
  transmission functions $\etahat$ and $\zetahat$.
\end{proof}

\subsection{Domain derivative in the time domain}

Next we consider the well-posedness of the time-domain analog of the
transmission problem \eqref{eq:TransmissionProblemFDLD}, which is then
used to characterize the temporal domain derivative in the
Theorems~\ref{thm:TemporalDomainDerivativeNearfield} and
\ref{thm:TemporalDomainDerivativeFarfield} below.
Let $r\in\R$, and let $\ui$ be an incident plane wave as in 
Theorem~\ref{thm:WellPosednessScatterinProblemTD}.
Then we denote by~$(\ut, \us)
\in H^{r+\frac12}_0\bigl(0,T; H^2(D^-) \times H^2(D^+)\bigr)$ the solution 
of the time-dependent scattering problem~\eqref{eq:TransmissionProblemTD}
with initial conditions
$\us(\ph,0) = \dit\us(\ph,0) = 0$ in $D^+$ and
$\ut(\ph,0) = \dit\ut(\ph,0) = 0$ in~$D^-$ as in
Theorem~\ref{thm:WellPosednessScatterinProblemTD}. 
Accordingly, we use the notation
\begin{subequations}
  \label{eq:DefEtaZetaTD}
  \begin{align}
    \eta'
    &\,:=\, h_\nu \Bigl( 1 - \frac{1}{\rho_0} \Bigr) \di_\nu^-\ut \,, \\
    \zeta'
    &\,:=\, - h_\nu \Bigl( 1 - \frac{1}{\rho_0 c_0^2} \Bigr)
      \di_t^2 \gamma^{-} \ut
      + \sdiv \Bigl( h_\nu \Bigl( 1-\frac{1}{\rho_0} \Bigr) \sgrad
      \gamma^{-} \ut \Bigr) \,.
  \end{align}
\end{subequations}

\begin{theorem}
  \label{thm:CharacterizationTemporalDomainDerivativeTD}
  The unique solution
  $u_h' \in H^{r-2}_0(0,T;H^1(\Rd\setminus\di D))$
  of the time-dependent transmission problem
  \begin{subequations}
    \label{eq:TransmissionProblemFDTD}
    \begin{align}
      \di_t^2 u_h' - \Delta u_h'
      &\,=\, 0  
      &&\text{in } D^+\times(0,\infty) \,, \\
      \frac{1}{c_0^2} \di_t^2u_h' - \Delta u_h' 
      &\,=\, 0  
      &&\text{in } D^-\times(0,\infty) \,, \\
      \gamma^+u_h' - \gamma^-u_h'
      &\,=\, \eta' 
      &&\text{on } \di D\times(0,\infty) \,, \\
      \di_\nu^+u_h'
      - \frac{1}{\rho_0} \di_\nu^-u_h'
      &\,=\, \zeta' 
      &&\text{on } \di D\times(0,\infty) \,,
    \end{align}
  \end{subequations}
  with homogeneous initial conditions
  $u_h'(\ph,0) = \dit u_h'(\ph,0) = 0$ in $\Rd$ 
  is given by
  \begin{subequations}
  \begin{align}
    u_h'|_{D^+}
    &\,=\, - \begin{bmatrix}
      \di_t\SL(\di_t) & \DL(\di_t)
    \end{bmatrix} \Ccal^{-1}(\di_t) \Acal(\di_t) \begin{bmatrix}
      -\di_t^{-1}\zeta' \\
      \eta'
    \end{bmatrix}
    &&\text{in } D^+ \times (0,\infty) \,, \label{eq:uprimeinR3}\\
    u_h'|_{D^-} 
    &\,=\, \begin{bmatrix}
      \rho_0 \di_t \SL(c_0 \di_t) & \DL(c_0 \di_t) 
    \end{bmatrix}
                                    \biggl( \Ccal^{-1}(\di_t) \Acal(\di_t) -
                                    \begin{bmatrix}
                                      0 & I \\ I & 0
                                    \end{bmatrix}
                                                   \biggr)
                                                   \begin{bmatrix}
                                                     -\di_t^{-1}\zeta' \\
                                                     \eta'
                                                   \end{bmatrix}
    &&\text{in } D^- \times (0,\infty) \,.
  \end{align}
  \end{subequations}
  Moreover, for any observation point
  $z\in\Rd\setminus\ol{\BR}$ the operator 
  $\Gcal_z: ( \di_\nu\ui \,,\, \gamma\ui ) \mapsto  u_h'(z,\ph)$ 
  has a continuous extension
  \begin{equation}
    \label{eq:ContinuousExtensionG}
    \Gcal_z :\, H^{r+3}_0\bigl(0,T;\HhalfdiD\times\HthreehalfdiD\bigr)
    \to H^{r-\frac72}_0(0,T;\R) \,.
  \end{equation}
  Similarly, the shifted far field pattern of $u_h'$ as defined 
  in~\eqref{eq:ShiftedFarfieldPatternTD} can be written as 
  \begin{equation}
    (u_h')^\infty_R
    \,=\,- \begin{bmatrix}
      \di_t\SL^\infty_R(\di_t) & \DL^\infty_R(\di_t)
    \end{bmatrix} \Ccal^{-1}(\di_t) \Acal(\di_t) 
    \begin{bmatrix}
      -\di_t^{-1}\zeta' \\
      \eta'
    \end{bmatrix} \qquad \text{on } \Sd \times (0,\infty) \,, \label{eq:uRprimeinfty}
  \end{equation}
  and for any observation direction $\xi\in\Sd$ the operator
  $\Gcal^\infty_\xi: ( \di_\nu\ui \,,\, \gamma\ui )
  \mapsto (u_h')^\infty_R(\xi,\ph)$ has a continuous extension
  \begin{equation}
    \label{eq:ContinuousExtensionGinfty}
    \Gcal^\infty_\xi:\, H^{r+3}_0\bigl(0,T;\HhalfdiD\times\HthreehalfdiD\bigr)
    \to H^{r-3}_0(0,T;\R) \,.
  \end{equation}
\end{theorem}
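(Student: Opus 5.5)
The plan is to argue exactly as in Theorem~\ref{thm:WellPosednessScatterinProblemTD}. We transfer the Laplace-domain representations \eqref{eq:ExplRepDomainDerivativeLD} and \eqref{eq:ExplRepDomainDerivativeFarfieldLD}, together with the $s$-explicit bounds of Proposition~\ref{pro:EstimatesU_h'LD}, to the time domain. The tool is the operational calculus recalled at the beginning of Section~\ref{sec:TimeDomain}.

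First we check that the transmission data in \eqref{eq:DefEtaZetaTD} are the inverse Laplace transforms of $\etahat'$ and $\zetahat'$ from \eqref{eq:DefEtahatprimeZetahatprime}, taking $\uthat = \Lcal\{\ut\}$. This holds because multiplication by $h_\nu$, $\sdiv$, $\sgrad$, $\gamma^-$ and $\di_\nu^-$ act only in space, and $s^2$ corresponds to $\di_t^2$ for causal functions. Theorem~\ref{thm:WellPosednessScatterinProblemTD} gives $\ut\in H^{r+\frac12}_0(0,T;H^2(D^-))$, so both $\eta'$ and $\zeta'$ are well-defined causal distributions.

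Next we define $u_h'$ by the right-hand sides of \eqref{eq:uprimeinR3}. By the composition rule, its Laplace transform is $\uhat_h'$ from \eqref{eq:ExplRepDomainDerivativeLD}. Since $\uhat_h'$ solves \eqref{eq:TransmissionProblemFDLD} for every $s\in\C_+$, inverting the Laplace transform shows that $u_h'$ solves \eqref{eq:TransmissionProblemFDTD} with zero initial data, which come from causality. Uniqueness follows because any two solutions have Laplace transforms solving the same problem in the Laplace domain, and that problem is uniquely solvable by the Lax--Milgram argument of Lemma~\ref{lmm:StabilityUshat}. Injectivity of the Laplace transform on $\Lscr'_+$ then finishes the uniqueness argument. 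The same reasoning applied to \eqref{eq:DomainDerivativeFarfield} and \eqref{eq:ExplRepDomainDerivativeFarfieldLD} gives the representation \eqref{eq:uRprimeinfty}.

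For the regularity and continuity statements we use Remark~\ref{rem:GeneralTransmissionFunctions}. The whole map $(\zetahat,\etahat)\mapsto\uhat_h'$ is an analytic family of operators $\HhalfdiD\times\HthreehalfdiD\to\cdots$, and Proposition~\ref{pro:EstimatesU_h'LD} gives the following bounds:
\begin{itemize}
\item[] $|s|^5/\real(s)^2$ into $H^1(D^\pm)$, which yields $H^{r-2}_0(0,T;H^1)$ from $H^{r+3}_0$ data;
\item[] $|s|^{13/2}/\real(s)^2$ pointwise at $z$, where the factor $\rme^{-\dist\real(s)}$ is at most $1$, which yields \eqref{eq:ContinuousExtensionG} with loss $r+3-\frac{13}{2}=r-\frac72$;
\item[] $|s|^6/\real(s)^2$ for the far field, which yields \eqref{eq:ContinuousExtensionGinfty} with loss $r-3$.
\end{itemize}
The factor $\rme^{-\real(s)(R-\delta_D(\xi))}$ is bounded by $1$ since $D\subset\BR$. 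Each map is a composition of the data-to-Laplace-domain-solution map, which is analytic in $s$, with the layer potentials, so the abstract convolution-operator result applies directly.

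The main obstacle is making sure that the $s$-analyticity and the polynomial bounds hold for the composite operator $(\zetahat,\etahat)\mapsto(\zetahat',\etahat')\mapsto\uhat_h'$. In particular, we need analyticity of $s\mapsto\vhat^-\in H^2(D^-)$, which follows from the representation \eqref{eq:ExplRepSolnTransmissionProblemLD} and the analyticity of $\Ccal^{-1}(s)$. We also need care in identifying the time-domain data as $(\di_\nu\ui,\gamma\ui)$, which sit in the correct Sobolev scale by Theorem~\ref{thm:WellPosednessScatterinProblemTD}.
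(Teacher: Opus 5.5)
Your proposal is correct and follows the paper's own argument: you transfer the Laplace-domain representations \eqref{eq:ExplRepDomainDerivativeLD} and \eqref{eq:ExplRepDomainDerivativeFarfieldLD} to the time domain via the operational calculus, and you read off the Sobolev losses $5$, $13/2$ and $6$ from the $s$-explicit bounds of Proposition~\ref{pro:EstimatesU_h'LD}, applied with the general transmission data of Remark~\ref{rem:GeneralTransmissionFunctions}. The paper compresses this into a single sentence, so your extra details (identifying $\eta',\zeta'$ as inverse Laplace transforms, uniqueness via the Laplace domain, and analyticity of the composite operator family) only fill in steps the paper leaves implicit.
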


\begin{proof}
  This follows from the corresponding explicit representations in the
  Laplace domain in~\eqref{eq:ExplRepDomainDerivativeLD} and
  \eqref{eq:ExplRepDomainDerivativeFarfieldLD} together with the
  estimates in Proposition~\ref{pro:EstimatesU_h'LD}. 
\end{proof}

Next we consider the time-domain analog of the operator $\Fhat_D$ from
\eqref{eq:OperatorFhatD}, which is given by
\begin{equation}
  \label{eq:OperatorFD}
  F_D:\, \Dcal(F_D) \subs C^1(\di D,\Rd) \to H^{r+1}_0(0,T;\R) \,,
  \qquad F_D(h) \,:=\, \us_h(z,\ph) \,,
\end{equation}
where $(\ut_h,\us_h)$ denotes the solution of the time-dependent 
transmission problem \eqref{eq:TransmissionProblemTD} with~$D$ replaced 
by $D_h$ and with~$\eta = - \gamma\ui|_{\di D_h}$ and 
  $\zeta = -\di_\nu\ui|_{\di D_h}$.
As before, $z\in\Rd\setminus\ol{\BR}$ denotes an arbitrary observation
point and $\Dcal(F_D)$ is the same as $\Dcal(\Fhat_D)$ in \eqref{eq:DefDomainFhatD}.

\begin{theorem}
  \label{thm:TemporalDomainDerivativeNearfield}
  The operator $F_D$ from \eqref{eq:OperatorFD} is Fr\'echet
  differentiable at zero.
  Its Fr\'echet derivative is given by $F_D'(0)h = u_h'(z,\ph)$ for all
  $h\in C^1(\di D,\Rd)$, where
  $u_h' \in H^{r-2}_0(0,T;H^1(\Rd\setminus\di D))$
  is the unique solution of the transmission
  problem~\eqref{eq:TransmissionProblemFDTD} with $\eta'$ and $\zeta'$
  from \eqref{eq:DefEtaZetaTD}. 
  The function $u'_h$ is called the temporal domain derivative of the 
  scattered wave.
  We have
  \begin{equation}
    \label{eq:EstFrechetDerivativeTDpointwise}
    \bigl\| F_D(h) - F_D(0) - F_D'(0)h \bigr\|_{H^{r-\frac12}_0(0,T;\R)}
    \,\leq\, C_T \|\ui\|_{H^{r+4}_0\bigl(0,T;H^1(\BRtilde\setminus\ol{\BR})\bigr)}
    \|h\|_{C^1(\di D)}^2 
  \end{equation}
  for any $\Rtilde>R$. 
\end{theorem}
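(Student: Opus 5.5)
The plan is to transfer Proposition~\ref{pro:FrechetDerivativeLD} into the time domain with a Paley--Wiener (Plancherel) argument in the Laplace variable along a vertical line $\real(s)=\sigma$. This is the same route used in Theorem~\ref{thm:WellPosednessScatterinProblemTD} and Theorem~\ref{thm:CharacterizationTemporalDomainDerivativeTD}, and it follows the proof of the corresponding result in~\cite{KnoNic26}.

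First we identify the candidate derivative. For each fixed $h$, Theorem~\ref{thm:CharacterizationTemporalDomainDerivativeTD} supplies $u_h'$, and its Laplace transform is $\uhat_h'$ from Proposition~\ref{pro:FrechetDerivativeLD}. This holds because the data $\eta'$ and $\zeta'$ in \eqref{eq:DefEtaZetaTD} are the inverse Laplace transforms of $\etahat'$ and $\zetahat'$ in \eqref{eq:DefEtahatprimeZetahatprime}: the factor $s^2$ corresponds to $\di_t^2$, and the Laplace transform commutes with $\gamma^-$, $\di_\nu^-$, $\sgrad$ and $\sdiv$. The map $h\mapsto u_h'(z,\ph)$ is linear in $h$, since $\eta'$ and $\zeta'$ are linear in $h_\nu$. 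It is bounded into $H^{r-\frac72}_0(0,T;\R)$ by \eqref{eq:ContinuousExtensionG} together with $\|\eta'\|,\|\zeta'\|\le C\|h\|_{C^1(\di D)}(\cdots)$. Hence $F_D'(0)$ is a bounded linear operator, and it only remains to prove the remainder estimate \eqref{eq:EstFrechetDerivativeTDpointwise}.

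Next we set $\rho_h(t):=F_D(h)-F_D(0)-u_h'(z,t)$, which is causal. Its Laplace transform is $\hat\rho_h(s)=\Fhat_D(h)-\Fhat_D(0)-\Fhat_D'(0)h$, so \eqref{eq:EstFrechetDerivativeLDpointwise} gives
\[
|\hat\rho_h(s)|\le C_\sigma\frac{|s|^{9/2}}{\real(s)^3}\,\|\uihat(s)\|_{H^1(\BRtilde\setminus\ol{\BR})}\,\|h\|_{C^1(\di D)}^2 .
\]
Here we discard the factor $\rme^{-\dist(z,\di\BR)\real(s)}/\dist(z,\di\BR)$, which is bounded for fixed $z$. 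On the line $\real(s)=\sigma$, with $\sigma=1/T$, the factor $\real(s)^{-3}$ is a constant $C_T$. We then invoke the standard Plancherel characterization of the causal Sobolev norm (cf.~\cite{BanSay22,Lub94}):
\[
\|g\|_{H^q_0(0,T)}\le C_T\Bigl(\int_{\sigma+\rmi\R}|s|^{2q}|\hat g(s)|^2\,|\dif s|\Bigr)^{1/2}.
\]
Taking $q=r-\frac12$ gives the weight $|s|^{r-\frac12+\frac92}=|s|^{r+4}$ on $\|\uihat(s)\|_{H^1}$. By the same characterization this integral is bounded by $C_T\|\ui\|_{H^{r+4}_0(0,T;H^1(\BRtilde\setminus\ol{\BR}))}$, and this yields \eqref{eq:EstFrechetDerivativeTDpointwise}. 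Once the remainder is $O(\|h\|^2_{C^1(\di D)})$ in $H^{r-\frac12}_0(0,T;\R)$, which embeds into the target space, Fréchet differentiability at zero follows.

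The main obstacle is the legitimacy of the Laplace-domain bound, since it is a pointwise-in-$s$ estimate for a nonlinear quantity. Fortunately, for each fixed $h$ the map $s\mapsto\hat\rho_h(s)$ is analytic in $\C_+$, being a difference of analytic functions, and the bound is polynomial in $|s|$. So the Paley--Wiener theorem applies directly and no operator-valued convolution calculus is needed. A second point is that $\ui$ is not causal on the whole of $\BRtilde$, but by \eqref{eq:CausalityUi} and a small enlargement of $R$ (or simply by the convention behind the norm in the statement) its restriction there vanishes for negative times. A third point is that the time truncation to $(0,T)$ must be handled by extending $\ui$ beyond $T$; causality ensures that $\rho_h$ on $(0,T)$ depends only on data on $(0,T)$, so the constant depends only on $T$.
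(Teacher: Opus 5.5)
Your proposal is correct and follows the paper's proof. The paper's argument is Plancherel on the line $\real(s)=\sigma=1/T$, then insertion of the pointwise bound \eqref{eq:EstFrechetDerivativeLDpointwise} (exponential factor discarded, $\real(s)^{-3}$ absorbed into $C_T$) to get the weight $|s|^{2r-1}\,|s|^{9}=|s|^{2r+8}$, and Plancherel back to the $H^{r+4}_0(0,T;H^1(\BRtilde\setminus\ol{\BR}))$-norm of $\ui$, exactly as you do.

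The paper leaves implicit the additional points you treat: identifying $u_h'$ via its Laplace transform, boundedness of $h\mapsto u_h'(z,\cdot)$ into $H^{r-\frac72}_0(0,T;\R)$, causality of $\ui$ on the annulus, and truncation to $(0,T)$. Your choice of $H^{r-\frac72}_0(0,T;\R)$ as target space, rather than the codomain $H^{r+1}_0(0,T;\R)$ of \eqref{eq:OperatorFD}, is the consistent reading of ``Fr\'echet differentiable'' here.
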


\begin{proof}
 Combining Plancherel's theorem and
 \eqref{eq:EstFrechetDerivativeLDpointwise}, we find that, for~$\sigma=1/T$, 
 \begin{equation*}
   \begin{split}
     \bigl\| F_D(h) - F_D(0) - F_D'(0)h \bigr\|_{H^{r-\frac12}_0(0,T;\R)}^2
     &\,\leq\, \rme^{2\sigma T} \int_{\sigma+\rmi\R} |s|^{2r-1}
     \bigl| \Fhat_D(h) - \Fhat_D(0) - \Fhat_D'(0)h \bigr|^2 \ds \\
     &\,\leq\, C_T \|h\|_{C^1(\di D)}^4
     \int_{\sigma+\rmi\R} |s|^{2r+8}
     \|\uihat\|^2_{H^1(\BRtilde\setminus\ol{\BR})} \ds \\
     &\,\leq\, C_T \|h\|_{C^1(\di D)}^4
     \|\ui\|^2_{H^{r+4}_0\bigl(0,T;H^1(\BRtilde\setminus\ol{\BR})\bigr)} 
   \end{split}
 \end{equation*}
 for any $\Rtilde>R$.
 This shows \eqref{eq:EstFrechetDerivativeTDpointwise}, which ends
 the proof.
\end{proof}

The time-domain analog of the operator $\Fhat_{D,R}^\infty$
from~\eqref{eq:OperatorFhatDinfty} is given by
\begin{equation}
  \label{eq:OperatorFDinfty}
  F_D^\infty :\;
  \Dcal(F_D^\infty) \subset C^1(\di D,\Rd) \to H^{r+\frac32}_0(0,T;\R) \,,
  \qquad F_D^\infty(h) \,:=\, \uinfty_{R,h}(\xi,\ph) \,,
\end{equation}
where $\uinfty_{R,h}$ denotes the shifted far field pattern 
of the solution of the time-dependent transmission 
problem~\eqref{eq:TransmissionProblemTD} with~$D$ replaced by $D_h$ 
and with~$\eta = - \gamma\ui|_{\di D_h}$ 
and~$\zeta = -\di_\nu\ui|_{\di D_h}$. 
As before, $\xi\in\Sd$ denotes an arbitrary observation direction and 
$\Dcal(F_D^\infty)$ 
is the same as $\Dcal(\Fhat_D)$ in \eqref{eq:DefDomainFhatD}.

\begin{theorem}
  \label{thm:TemporalDomainDerivativeFarfield}
  The operator $F_D^\infty$ from \eqref{eq:OperatorFDinfty} is Fr\'echet
  differentiable at zero.
  Its Fr\'echet derivative is given by
  $(F_D^\infty)'(0)h = (u_h')_R^\infty(\xi,\ph)$ for all
  $h\in C^1(\di D,\Rd)$, where
  $(u_h')_R^\infty \in H^{r-3}_0(0,T;L^\infty(\Sd))$
  is the far field pattern of the unique solution of the transmission
  problem~\eqref{eq:TransmissionProblemFDTD} with $\eta'$ and $\zeta'$
  from~\eqref{eq:DefEtaZetaTD}. 
  The function $(u'_h)_R^\infty$ is called the temporal domain derivative 
  of the shifted far field pattern. 
  We have
  \begin{equation*}
    \bigl\| F_D^\infty(h) - F_D^\infty(0) - (F_D^\infty)'(0)h \bigr\|_{H^{r}_0(0,T;\R)}
    \,\leq\, C_T \|\ui\|_{H^{r+4}_0\bigl(0,T;H^1(\BRtilde\setminus\ol{\BR})\bigr)}
    \|h\|_{C^1(\di D)}^2 \, .
  \end{equation*}
\end{theorem}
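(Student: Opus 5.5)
The plan is to mirror the proof of Theorem~\ref{thm:TemporalDomainDerivativeNearfield}, replacing the pointwise near-field estimate \eqref{eq:EstFrechetDerivativeLDpointwise} with its far-field analogue \eqref{eq:EstFrechetDerivativeFarfieldLDpointwise} from Corollary~\ref{cor:FrechetDerivativeLD}.

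First, we identify the candidate derivative. For fixed $h$, Theorem~\ref{thm:CharacterizationTemporalDomainDerivativeTD} shows that the time-domain function $(u_h')^\infty_R(\xi,\ph)$ from \eqref{eq:uRprimeinfty} has Laplace transform equal to $(\uhat_h')^\infty_R(\xi,s)$. The latter is given in \eqref{eq:ExplRepDomainDerivativeFarfieldLD}, since both are defined through the same operator family $\Ccal^{-1}\Acal$ composed with the far field layer operators. The map $h\mapsto (u_h')^\infty_R(\xi,\ph)$ is linear in $h$, because $\eta'$ and $\zeta'$ from \eqref{eq:DefEtaZetaTD} are linear in $h_\nu$. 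It is also bounded: the continuous extension \eqref{eq:ContinuousExtensionGinfty} combined with \eqref{eq:BoundUinftyhat'PointwiseLD} gives a bound of the form $C\|h\|_{C^1(\di D)}$ times a norm of $\ui$. The regularity statement $(u_h')_R^\infty\in H^{r-3}_0(0,T;L^\infty(\Sd))$ follows from \eqref{eq:BoundUinftyhat'PointwiseLD} holding uniformly in $\xi$, since $\rme^{-\real(s)(R-\delta_D(\xi))}\le 1$. Causality of all quantities involved comes from the shift by $R$.

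Second, we estimate the remainder. The three terms $F_D^\infty(h)$, $F_D^\infty(0)$ and $(F_D^\infty)'(0)h$ are causal with Laplace transforms $\Fhat_{D,R}^\infty(h)$, $\Fhat_{D,R}^\infty(0)$ and $(\Fhat^\infty_{D,R})'(0)h$. We take $\sigma=1/T$ and use the Plancherel/Paley--Wiener bound for the $H^r_0(0,T)$ norm:
\begin{equation*}
\|g\|_{H^r_0(0,T;\R)}^2\le \rme^{2\sigma T}\int_{\sigma+\rmi\R}|s|^{2r}|\ghat(s)|^2\ds .
\end{equation*}
Inserting \eqref{eq:EstFrechetDerivativeFarfieldLDpointwise}, bounding $\rme^{-\eps\real(s)}\le 1$, and using that $\real(s)=\sigma$ is fixed, we obtain the integrand $|s|^{2r+8}\|\uihat\|^2_{H^1(\BRtilde\setminus\ol{\BR})}\|h\|^4_{C^1(\di D)}$, up to $\sigma^{-6}$ absorbed into $C_T$. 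Applying Plancherel once more in the reverse direction bounds $\int_{\sigma+\rmi\R}|s|^{2r+8}\|\uihat\|^2\ds$ by $C_T\|\ui\|^2_{H^{r+4}_0(0,T;H^1(\BRtilde\setminus\ol{\BR}))}$. Taking square roots gives the claimed estimate. The quadratic dependence on $\|h\|_{C^1(\di D)}$ then yields Fréchet differentiability at zero, with the derivative as stated.

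The main obstacle is the step converting between $H^r_0(0,T)$ norms and weighted $L^2$ norms on the line $\sigma+\rmi\R$. For the incident field this requires extending $\ui|_{\BRtilde\setminus\ol{\BR}}$ causally beyond $T$ (it is compactly supported in time, which helps), and the resulting constant depends on $T$. This is the same technical point handled implicitly in the proof of Theorem~\ref{thm:TemporalDomainDerivativeNearfield}, and we would treat it identically. The exponent bookkeeping works out: $|s|^4$ in \eqref{eq:EstFrechetDerivativeFarfieldLDpointwise}, as opposed to $|s|^{9/2}$ for the near field, is exactly why the loss here is four derivatives, giving $H^r$ rather than $H^{r-1/2}$.
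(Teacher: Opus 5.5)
Your proposal is correct and is essentially the paper's argument. The paper's proof simply repeats the proof of Theorem~\ref{thm:TemporalDomainDerivativeNearfield} (Plancherel on $\sigma+\rmi\R$ with $\sigma=1/T$, insertion of the Laplace-domain remainder bound, then Plancherel back to the $H^{r+4}_0$ norm of $\ui$) with \eqref{eq:EstFrechetDerivativeLDpointwise} replaced by \eqref{eq:EstFrechetDerivativeFarfieldLDpointwise}, exactly as you do, and your exponent count $|s|^{2r}\cdot|s|^{8}=|s|^{2r+8}$ matches it. Your extra remarks on identifying the candidate derivative via \eqref{eq:uRprimeinfty} and on extending $\ui$ causally beyond $T$ only spell out what the paper leaves implicit.
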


\begin{proof}
 After replacing \eqref{eq:EstFrechetDerivativeLDpointwise} by
 \eqref{eq:EstFrechetDerivativeFarfieldLDpointwise}, the proof is
 analogous to the  proof of
 Theorem~\ref{thm:TemporalDomainDerivativeNearfield}.  
\end{proof}

\subsection{Semi-discretization in time using convolution quadrature}
For the numerical reconstruction algorithms for the inverse backscattering 
problem developed in Section~\ref{sec:NumericalExamples}, we use 
Runge-Kutta convolution quadrature (RKCQ) based on Radau IIA methods for 
time discretization and a boundary element method for spatial discretization.
In this subsection, we apply the Laplace-domain bounds established in 
Sections~\ref{sec:ScatteringProblemLD} and \ref{sec:DomainDerivativeLD} to 
derive convergence rates for the resulting semi-discretization with 
respect to time. 
We refer the reader to \cite[Sec.~5]{BanSay22} for an introduction to 
RKCQ and its numerical implementation.

The Laplace-domain bounds in \eqref{eq:vhatboundpwLD}, 
\eqref{eq:EstFarfieldPatternPointwiseLD}, \eqref{eq:BoundUhat'PointwiseLD}, 
and \eqref{eq:BoundUinftyhat'PointwiseLD} imply exponential decay of 
the pointwise Laplace-domain near and far field evaluations as $\real(s)$ 
increases. 
Such bounds have favorable implications for the convergence of RKCQ time 
discretizations of the associated time-domain convolution-type operators. 
In particular, \cite[Thm.~3]{BanLubMel11} shows that away from the scatterer
the Radau IIA-based convolution quadrature considered in this work attains 
the full classical order of the underlying Runge-Kutta scheme, provided 
that the incoming wave is sufficiently regular in time.

For a given final time $T>0$ and fixed $z \in \Rd\setminus\ol{\BR}$ and $\xi\in S^2$, we aim 
to approximate $\us(z,t)$ and~$u_h'(z,t)$ from~\eqref{eq:usinDp} 
and~\eqref{eq:uprimeinR3}, respectively, as well as $\uinfty_R(\xi,t)$ and 
$(u_h')_R^\infty(\xi)$ from \eqref{eq:uRinfty} and~\eqref{eq:uRprimeinfty}, 
at the discrete time points $t_n=n\tau$, $n=0,\dots,M$, where $\tau>0$ denotes 
the time-step size and, without loss of generality, $M:=T/\tau\in\mathbb{N}$. 
We assume that the temporal approximations are computed using an $m$-stage 
Radau IIA RKCQ method. 
Accordingly, we denote the approximations to $\us_h(z,t_n)$ and $u_h'(z,t_n)$ 
by $(\us_{h,\tau}(z))_n$ and $(u'_{h,\tau}(z))_n$, respectively, and the 
approximations to~$\uinfty_{R,h}(\xi,t_n)$ and $(u_h')_R^\infty(\xi,t_n)$ by 
$(\uinfty_{R,h,\tau}(\xi))_n$ and $((u_h')_{R,\tau}^\infty(\xi))_n$, 
respectively.
For $h=0$, we omit the index $h$ in $u_{h,\tau}^s$ and $u_{R,h}^\infty$.

Combining the Laplace-domain bounds established in Corollary~\ref{cor:LDpwbound} 
and Proposition~\ref{pro:EstFarfieldPatternPointwiseLD} with 
\cite[Thm.~3]{BanLubMel11} yields the following convergence result for the 
solution of the direct scattering problem.

\begin{lemma}
\label{lmm:ErrorTimeDirect}
    Let $z \in \Rd\setminus\ol{\BR}$ and $\xi \in S^2$ be fixed.
    The convolution quadrature semi-discretization in time based on a 
    Radau IIA method with $m$ stages used to approximate both 
    $\us(z,t_n)$ and $\uinfty_R(\xi,t_n)$ provides the following estimates 
    at $t_n = n\tau$:
    \begin{subequations}
    \begin{align}
        \bigl|\us(z,t_n) - (u_{\tau}^s(z))_n\bigr|
        &\,\leq\, C \tau^{2m-1} 
        \bigl\| (\di_\nu \ui, \gamma \ui) 
        \bigr\|_{H_0^r\bigl(0,T;\HminhalfdiD\times\HhalfdiD\bigr)} \,, \label{eq:usconvbound}\\
        \bigl|\uinfty_R(\xi,t_n) - (u_{R,\tau}^\infty(\xi))_n\bigr|
        &\,\leq\, C \tau^{2m-1} 
        \bigl\| (\di_\nu \ui, \gamma \ui) 
        \bigr\|_{H_0^{r-\frac{1}{2}}\bigl(0,T;\HminhalfdiD\times\HhalfdiD\bigr)} \,, \label{eq:uinftyconvbound}
    \end{align}
    \end{subequations}
    for $(\partial_\nu \ui, \gamma \ui) \in H_0^r\bigl(0,T;\HminhalfdiD\times\HhalfdiD\bigl)$ 
    with $r>2m+9/2$.
\end{lemma}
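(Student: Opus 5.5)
The plan is to apply \cite[Thm.~3]{BanLubMel11} directly to the two convolution operators that map the boundary data $(\di_\nu\ui,\gamma\ui)$ to $\us(z,\ph)$ and to $\uinfty_R(\xi,\ph)$. In the Laplace domain these are the transfer operators
\begin{equation*}
  K_z(s) \,:=\, -\bigl(\begin{bmatrix} s\SL(s) & \DL(s)\end{bmatrix}\Ccal^{-1}(s)\Acal(s)\,\cdot\,\bigr)(z) \,,
  \qquad
  K^\infty_\xi(s) \,:=\, -\bigl(\begin{bmatrix} s\SL^\infty_R(s) & \DL^\infty_R(s)\end{bmatrix}\Ccal^{-1}(s)\Acal(s)\,\cdot\,\bigr)(\xi) \,,
\end{equation*}
both acting on $(-\tfrac1s\zetahat,\etahat)\in\Xcal$. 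After composing with the bounded maps $(\zetahat,\etahat)\mapsto(-\tfrac1s\zetahat,\etahat)$, both are analytic families from $\HminhalfdiD\times\HhalfdiD$ into $\C$. The time-domain quantities are therefore $K_z(\di_t)$ and $K^\infty_\xi(\di_t)$ applied to the data, by \eqref{eq:usinDp} and \eqref{eq:uRinfty}. The CQ approximations $(u^s_\tau(z))_n$ and $(u^\infty_{R,\tau}(\xi))_n$ are the Radau IIA convolution quadratures of these same operators.

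The first step is to check the hypotheses of \cite[Thm.~3]{BanLubMel11}. These require the transfer operator to be analytic in a half-plane $\real(s)>\sigma_0$ and to satisfy a bound of the form
\begin{equation*}
  |K(s)| \,\leq\, C\,|s|^{\mu}\,\rme^{-d\real(s)} \,, \qquad \real(s)\geq\sigma>0 \,,
\end{equation*}
with $d\geq0$.

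Analyticity follows from the analyticity of the layer operators and of $\Ccal^{-1}(s)$, using \eqref{eq:EstInverseOpCcal}.

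For the bounds, Corollary~\ref{cor:LDpwbound} gives
\begin{equation*}
  |K_z(s)| \,\leq\, C_{\sigma,\delta}\,|s|^3\,\rme^{-\dist(z,\di D)\real(s)} \,,
\end{equation*}
and Proposition~\ref{pro:EstFarfieldPatternPointwiseLD} gives
\begin{equation*}
  |K^\infty_\xi(s)| \,\leq\, C_\sigma\,|s|^{5/2}\,\real(s)^{-1}\,\rme^{-\real(s)(R-\delta_D(\xi))} \,\leq\, C_\sigma\,|s|^{5/2}\,\rme^{-\real(s)(R-\delta_D(\xi))} \,.
\end{equation*}
In the last step $\real(s)^{-1}\leq\sigma^{-1}$ is absorbed into the constant.

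The exponential factors are harmless: both $\dist(z,\di D)>0$ and $R-\delta_D(\xi)>0$, the latter because $D\subset\subset\BR$. They are exactly the ``away from the scatterer'' setting of that theorem, in which the reduced order caused by $\real(s)$-dependent growth does not occur and the full classical order $2m-1$ is attained.

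The second step is to read off the regularity requirement. In \cite[Thm.~3]{BanLubMel11} the data must lie in $H^{r}_0$ with $r$ exceeding the order $2m-1$ plus $\mu$ plus a fixed margin (of the order $\tfrac32$ to $2$, coming from the Sobolev embedding used to obtain pointwise-in-time error bounds). With $\mu=3$ for the near field this margin is what should produce the threshold $r>2m+9/2$. For the far field $\mu=5/2$, one half-power less, so the same argument gives \eqref{eq:uinftyconvbound} with the norm in $H^{r-\frac12}_0$; the regularity is then satisfied under the same condition $r>2m+9/2$. Plugging in $\mu$ yields \eqref{eq:usconvbound} and \eqref{eq:uinftyconvbound}, with $C$ depending on $T$, $\sigma=1/T$, $z$, $\xi$ and $m$.

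The main obstacle I expect is the bookkeeping needed to match the precise form of the hypotheses and of the regularity index in \cite[Thm.~3]{BanLubMel11}. That result is stated for operators bounded by $|s|^\mu\rme^{-d\real(s)}$ without an inverse power of $\real(s)$. The factor $\real(s)^{-1}$ in Proposition~\ref{pro:EstFarfieldPatternPointwiseLD} (and the constants $C_\sigma$) must be absorbed by fixing $\sigma$, and one must confirm that the theorem only needs the bound for $\real(s)\geq\sigma$ and not near the imaginary axis. If it instead requires a sectorial or full-half-plane bound, I would first shift by $\sigma$, i.e.\ work with $\rme^{-\sigma t}$-weighted data on the finite interval $[0,T]$, which only affects the constant through $\rme^{\sigma T}$. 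I would then verify that the stated Sobolev index reproduces $r>2m+9/2$.
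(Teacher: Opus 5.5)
Your proposal follows the paper's proof: apply \cite[Thm.~3]{BanLubMel11} with the bound $|s|^3\rme^{-\dist(z,\di D)\real(s)}$ from Corollary~\ref{cor:LDpwbound} and $|s|^{5/2}\rme^{-\real(s)(R-\delta_D(\xi))}$ from Proposition~\ref{pro:EstFarfieldPatternPointwiseLD}, then convert the resulting time-regularity requirement into an $H^r_0$ norm by Sobolev embedding. Your estimate of the margin (``$\frac32$ to $2$'') is off, and with it the arithmetic does not actually give $2m+\frac92$. The theorem bounds the error by $C\tau^{2m-1}\int_0^{t_n}\|\di_t^{\widetilde r+1}(\di_\nu\ui,\gamma\ui)\|\dt'$ only for $\widetilde r>2m-1+\mu+1$, and the embedding $H^r_0\subset C^{\widetilde r+1}$ requires $r>\widetilde r+\frac32$. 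The total margin over $2m-1+\mu$ is therefore $\frac52$, which gives exactly $r>2m+\frac92$ for $\mu=3$ and the $H^{r-\frac12}_0$ norm for $\mu=\frac52$.
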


\begin{proof}
    Using Corollary~\ref{cor:LDpwbound} together with the bound 
    from~\cite[Thm.~3]{BanLubMel11}, applied to the causal function~$\ui$ on $\Gamma$, 
    immediately yields the estimate
    \begin{equation*}
        \bigl|\us(z,t_n) - (u_{\tau}^s(z))_n\bigr|
        \,\leq \, C \tau^{2m-1} \int_0^{t_n}
        \| \di_t^{(\widetilde{r}+1)}(\di_\nu \ui, \gamma \ui) 
        \|_{\HminhalfdiD\times\HhalfdiD} \dt'
    \end{equation*}
    for any $\widetilde{r}>2m+3$.  
    We now use the embedding $H_0^r(0,T;X) \subset C_0^{\widetilde{r}+1}([0,T];X)$, 
    valid for any $r>3/2+\widetilde{r}$, to obtain~\eqref{eq:usconvbound}. 
    The error bound~\eqref{eq:uinftyconvbound} follows similarly.
\end{proof}

Using the same approach, we obtain a temporal error bound for the time 
semi-discretization of the temporal domain derivative. 
As before, we derive estimates for both the near and far field, based 
on \cite[Thm.~3]{BanLubMel11} and Proposition~\ref{pro:EstimatesU_h'LD}. 
The proof proceeds analogously to that of Lemma~\ref{lmm:ErrorTimeDirect}.

\begin{theorem}
    Let $z \in \Rd\setminus\ol{\BR}$ and $\xi \in S^2$ be fixed.
    The convolution quadrature semi-discretization in time based on a Radau IIA 
    method with $m$ stages used to approximate both $u_h'(z,t_n)$ and 
    $(u_h')_R^\infty(\xi,t_n)$ provides the following estimates at $t_n = n\tau$:
    \begin{align*}
        \bigl|u_h'(z,t_n) - (u_{h,\tau}'(z))_n\bigr|
        &\,\leq\, C \tau^{2m-1} \bigl\| (\di_\nu \ui, \gamma \ui) 
        \bigr\|_{H_0^r\bigl(0,T;\HhalfdiD\times \HthreehalfdiD\bigr)} 
        \,, \\
        \bigl|(u_h')_R^\infty(\xi,t_n) - ((u_{h}')_{R,\tau}^\infty(\xi))_n\bigr|
        &\,\leq\, C \tau^{2m-1} \bigl\| (\di_\nu \ui, \gamma \ui) 
        \bigr\|_{H_0^{r-\frac{1}{2}}\bigl(0,T;\HhalfdiD\times\HthreehalfdiD\bigr)} \,,
    \end{align*}
    for $(\di_\nu \ui, \gamma \ui) \in H_0^r\bigl(0,T;\HhalfdiD\times \HthreehalfdiD\bigr)$ 
    with $r>2m+8$.
\end{theorem}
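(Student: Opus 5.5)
We argue exactly as in the proof of Lemma~\ref{lmm:ErrorTimeDirect}, replacing the Laplace-domain bounds of Corollary~\ref{cor:LDpwbound} and Proposition~\ref{pro:EstFarfieldPatternPointwiseLD} by those of Proposition~\ref{pro:EstimatesU_h'LD}. By \eqref{eq:ExplRepDomainDerivativeLD} and \eqref{eq:ExplRepDomainDerivativeFarfieldLD}, the Laplace transforms of $z\mapsto u_h'(z,\ph)$ and of $(u_h')^\infty_R(\xi,\ph)$ are obtained by applying an analytic operator family $K(s)$ to the Laplace transformed boundary data $(\di_\nu\uihat,\gamma\uihat)$. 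Here $K(s)$ is the composition of the map $(\di_\nu\uihat,\gamma\uihat)\mapsto(\etahat',\zetahat')$ from \eqref{eq:DefEtahatprimeZetahatprime} (via the solution of \eqref{eq:TransmissionProblemLD}) with the representation through $\Ccal^{-1}(s)\Acal(s)$ and the near or far field potentials. Remark~\ref{rem:GeneralTransmissionFunctions} lets us regard $K(s)$ as a linear map on $\HhalfdiD\times\HthreehalfdiD$ for general transmission functions. Under this convention, \eqref{eq:BoundUhat'PointwiseLD} and \eqref{eq:BoundUinftyhat'PointwiseLD} read
\begin{equation*}
  \|K_z(s)\| \,\leq\, C_\sigma \frac{\rme^{-d\,\real(s)}}{d}\frac{|s|^{\frac{13}{2}}}{\real(s)^2} \,,
  \qquad
  \|K_\xi^\infty(s)\| \,\leq\, C_\sigma \rme^{-\real(s)(R-\delta_D(\xi))}\frac{|s|^{6}}{\real(s)^2} \,,
\end{equation*}
with $d=\dist(z,\di D)>0$ and $R-\delta_D(\xi)>0$. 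These are precisely the hypotheses of \cite[Thm.~3]{BanLubMel11}: a polynomial bound in $|s|$ times an exponentially decaying factor in $\real(s)$, which is what gives the full classical order $2m-1$ of Radau IIA away from the scatterer.

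The first step is to verify that $K_z$ and $K^\infty_\xi$ fit this framework. Analyticity in $\C_+$ follows from the analyticity of $\Ccal^{-1}(s)$, $\Acal(s)$ and of the layer potentials. The time-domain object computed by RKCQ must also coincide with $K(\di_t)$ applied to the causal data $(\di_\nu\ui,\gamma\ui)$. This holds because the discrete scheme discretizes the full composition, including the inner solve for $\ut$ that produces $\eta',\zeta'$, as a single convolution operator. Applying \cite[Thm.~3]{BanLubMel11} with exponent $\mu=13/2$ for the near field then gives
\begin{equation*}
  \bigl|u_h'(z,t_n)-(u'_{h,\tau}(z))_n\bigr| \,\leq\, C\tau^{2m-1}\int_0^{t_n}\bigl\|\di_t^{(\widetilde r+1)}(\di_\nu\ui,\gamma\ui)\bigr\|_{\HhalfdiD\times\HthreehalfdiD}\dt'
\end{equation*}
for an integer $\widetilde r$ exceeding $2m-1+\mu$, up to the shift prescribed by that theorem. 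For the far field the same holds with $\mu=6$, which requires half an order less.

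The final step is the Sobolev embedding $H^r_0(0,T;X)\subset C^{\widetilde r+1}_0([0,T];X)$, valid for $r>\widetilde r+3/2$. Together with $\int_0^{t_n}\le T\sup$, it bounds the right-hand side by $C\|(\di_\nu\ui,\gamma\ui)\|_{H^r_0}$. The near field requires $r>2m+8$, and the far field works with $r-\tfrac12$ because its exponent is half an order smaller.

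The main obstacle is this exponent bookkeeping. We must match the precise regularity threshold of \cite[Thm.~3]{BanLubMel11}, namely how $\mu$ and the power of $\real(s)^{-1}$ enter the required number of time derivatives, against the stated condition $r>2m+8$. We must also make sure the factor $\real(s)^{-2}$, which is harmless for fixed $\sigma$, is admissible there. If that theorem is stated for a single operator rather than a composition, we would apply it to the composed family $K(s)$ directly, which is legitimate since only the bound on $K(s)$ enters.
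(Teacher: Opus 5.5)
Your proposal is correct and is essentially the paper's argument: the paper proves this theorem exactly as Lemma~\ref{lmm:ErrorTimeDirect}, inserting the bounds \eqref{eq:BoundUhat'PointwiseLD} and \eqref{eq:BoundUinftyhat'PointwiseLD} for the composed operator family into \cite[Thm.~3]{BanLubMel11} and finishing with the embedding $H^r_0(0,T;X)\subset C^{\widetilde r+1}_0([0,T];X)$ for $r>\widetilde r+3/2$. On the bookkeeping you flagged, the convention used in Lemma~\ref{lmm:ErrorTimeDirect} (where $\mu=3$ gives $\widetilde r>2m+3$) is $\widetilde r>2m+\mu$ rather than $2m-1+\mu$; with $\mu=13/2$ and $\mu=6$ this gives exactly $r>2m+8$ for the near field and the $H^{r-\frac12}_0$-norm for the far field.
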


A detailed spatial error analysis and, consequently, a comprehensive assessment 
of the full discretization error are beyond the scope of this work. 
To keep the presentation concise, we restrict our discussion to the Galerkin 
implementation and omit further technical details. 
In our numerical experiments, we employ the open-source boundary element method 
library \texttt{bempp}; see \cite{BetScr21} for further details.
Convolution quadrature requires the solution of a large number of scattering 
problems in the Laplace domain. 
This applies both to the computation of the direct problem 
in~\eqref{eq:ExplRepSolnTransmissionProblemLD} and~\eqref{eq:vinftyRrepLD} 
as well as to the evaluation of the domain derivative 
in~\eqref{eq:ExplRepDomainDerivativeLD} and~\eqref{eq:ExplRepDomainDerivativeFarfieldLD}.
We use piecewise linear functions for both the trial and the test function 
space in the Galerkin discretization.
For the Galerkin approximation $\gamma^-\uthat_h$ of $\gamma^-\uthat$ appearing 
on the right-hand side of \eqref{eq:DefEtahatprimeZetahatprime2}, the surface 
gradient $\sgrad \gamma^-\uthat_h$ is piecewise constant. 
Consequently, the surface divergence can be assembled weakly according to
\begin{equation*}
  \int_{\di D} \sdiv \Bigl( h_\nu \Bigl( 1-\frac{1}{\rho_0} \Bigr)
  \sgrad \gamma^-\uthat_h \Bigr) \, \widehat{y}_h \ds
  \,=\, - \int_{\di D} h_\nu \Bigl( 1-\frac{1}{\rho_0} \Bigr)
  \sgrad \gamma^-\uthat_h \cdot \sgrad \widehat{y}_h \ds
\end{equation*}
for all piecewise linear functions $\widehat{y}_h$.

\section{Shape reconstruction for time-dependent backscattering data}
\label{sec:NumericalExamples}
In this section, we return to the inverse scattering problem and
describe an iterative reconstruction algorithm to 
recover the shape of a three-dimensional penetrable scattering object 
from observations time-dependent far field backscattering data at 
finitely many incident/observation directions. 
We suppose that far field patterns $\uinfty(-\theta_k,t)$ as 
in~\eqref{eq:FarfieldTD} corresponding to time-dependent incident 
plane waves $\ui(\ph,\ph;\theta_k)$ as in~\eqref{eq:PlaneWave} with 
direction of propagation~$\theta_k$  are available for finitely many $\theta_1,\ldots,\theta_K\in\Sd$ and all $t\in[0,T]$.
Here the observation time $T>0$ is assumed to be sufficiently large.
In the following we use the notation~$\uinfty(-\theta_k,\ph;\theta_k)$, 
$k=1,\ldots,K$, for such backscattering data obtained with incident 
direction $\theta_k$ and observation direction $-\theta_k$. 

Accordingly, we consider the nonlinear operator $F^\infty$ mapping the shape 
of an obstacle $\di D$ to the associated far field 
data~$(\uinfty(-\theta_k,\ph;\theta_k))_{1\leq k\leq K} \subs L^2(0,T)$.
We restrict ourselves to boundaries~$\di D$ that are star-shaped with
respect to the origin, i.e., $\di D$ can be represented in the parametric 
form
\begin{equation*}
  \di D 
  \,=\, \{ x \in \Rd \;|\; x=r_D(\xi)\xi \,,\; \xi \in \Sd \} 
\end{equation*}
for some positive, twice continuously differentiable 
function~$r_D: \Sd \to \R$ representing the radial distance from the origin. 
For each incident direction $\theta_k$, $k=1,\ldots, K$, the solution 
of the direct scattering problem defines an operator
\begin{equation*}
  F_{\theta_k}^\infty :\; C^2_+(\Sd) \to L^2(0,T) \,, 
\end{equation*}
which maps the parametrization $r_D$ of an admissible obstacle boundary 
$\di D$ to the associated backscattering data~$\uinfty(-\theta_k,\ph;\theta_k)$. 
Here, $C^2_+(\Sd) := \{ r\in C^2(\Sd) \;|\; r>0 \}$.
Therewith, we seek a solution $\di D$ to the nonlinear inverse problem
\begin{equation}
  \label{eq:InverseProblem}
  F_{\theta_k}^\infty(r)
  \,=\, \uinfty(-\theta_k,\ph;\theta_k)
  \qquad \text{for all } k \in \{1, \dots, K\} \,.
\end{equation}
We consider the solution of \eqref{eq:InverseProblem} as the minimum of 
the associated least squares functional.
To handle the ill-conditioning we use a Tikhonov approach, adding 
an~$H^2$-penalty term to obtain 
\begin{equation}
  \label{eq:DefLSQFunctional}
  \Psi_\alpha(r) 
  \,:=\, \sum_{k=1}^{K} \bigl\| F_{\theta_k}^\infty(r) 
  - \uinfty(-\theta_k,\ph;\theta_k) \bigr\|_{L^2(0,T)}^2 
  + \alpha \| r \|_{H^2(\Sd)}^2 \,, \qquad r \in C^2_+(\Sd) \,.
\end{equation} 
As usual, $\alpha>0$ denotes a regularization parameter. 

We apply the Gauß-Newton (GN) method to obtain a sequence of 
approximations $(r_n)_n$ to the parametrization of the obstacle 
boundary $r_D$, typically using a decreasing sequence $(\alpha_n)_n$ 
of regularization parameters depending on the relative residual. 
This is a widely used strategy; see, e.g., \cite{KreRun98}. 
In our numerical implementation, the unknown function~$r$ is approximated 
by a finite linear combination of spherical harmonics of degree at most $L = 5$.
Using the composite trapezoidal rule to discretize the~$L^2(0,T)$-norm 
in \eqref{eq:DefLSQFunctional}, and Parseval's identity to express the 
$H^2(\Sd)$-norm in terms of the $\ell^2$-norm of the spherical harmonics 
coefficients of $r$, the objective functional~$\Psi_\alpha$ can, with a 
slight abuse of notation, be written as
\begin{equation*}
    \Psi_\alpha(p) \,=\, |\Theta(p)|^2 \,.
\end{equation*}
Here, $p\in\R^{(L+1)^2}$ denotes the parameter vector containing 
the spherical harmonics coefficients of~$r$, and $\Theta$ is a 
nonlinear vector-valued function of~$p$.
Denoting the Jacobian of $\Theta$ by $J$, each GN step
\begin{equation*}
    p_{n+1} 
    \,=\, p_n - \bigl( J(p_n)^\trans J(p_n) \bigr)^{-1} 
    J(p_n)^\trans \Theta(p_n) \,, \qquad n\in\N \,,
\end{equation*}
requires the computation of the backscattered waves 
$F_{\theta_k}^\infty(r_n)$ and their temporal domain derivative 
of $(F_{\theta_k}^\infty)'(r_n)$ for each incident 
direction $\theta_k$, $k=1,\ldots,K$.

\begin{figure}[th]
  \centering
  \begin{subfigure}{0.32\textwidth}
    \centering
    \includegraphics[width=1.\linewidth]{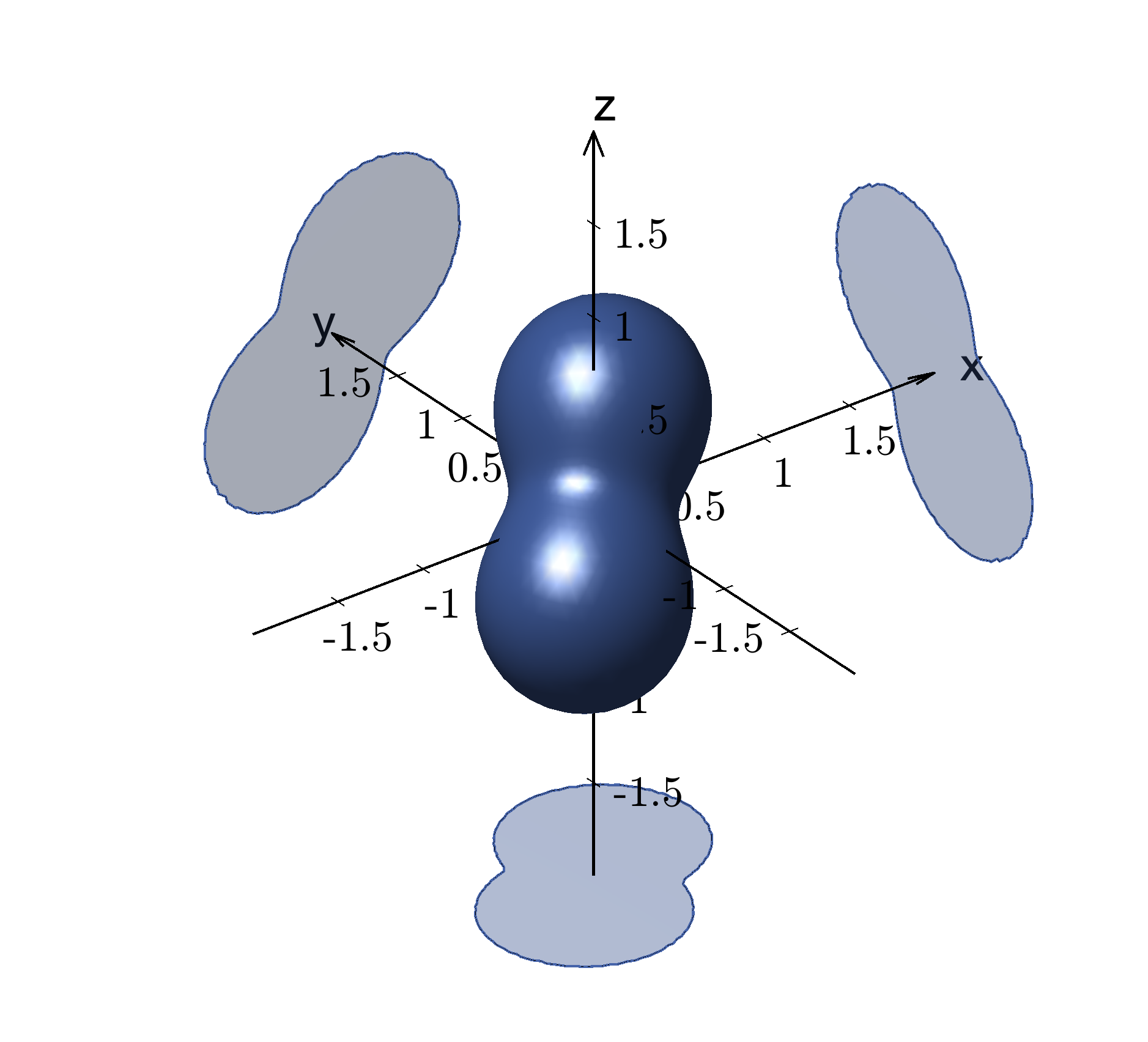}
  \end{subfigure}
  \begin{subfigure}{0.32\textwidth}
    \centering
    \includegraphics[width=1.\linewidth]{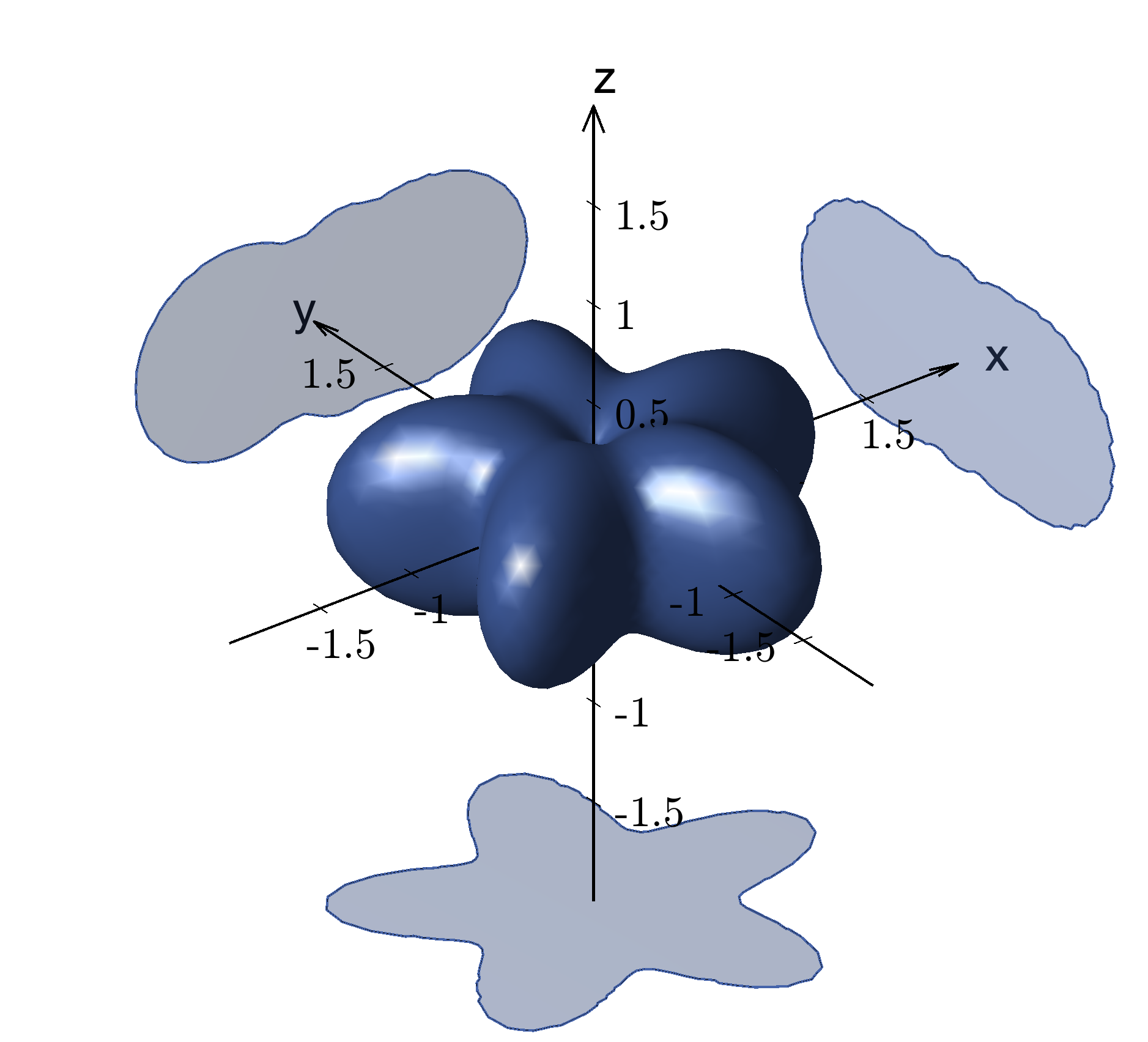}
  \end{subfigure}
  \begin{subfigure}{0.32\textwidth}
    \centering
    \includegraphics[width=1.\linewidth]{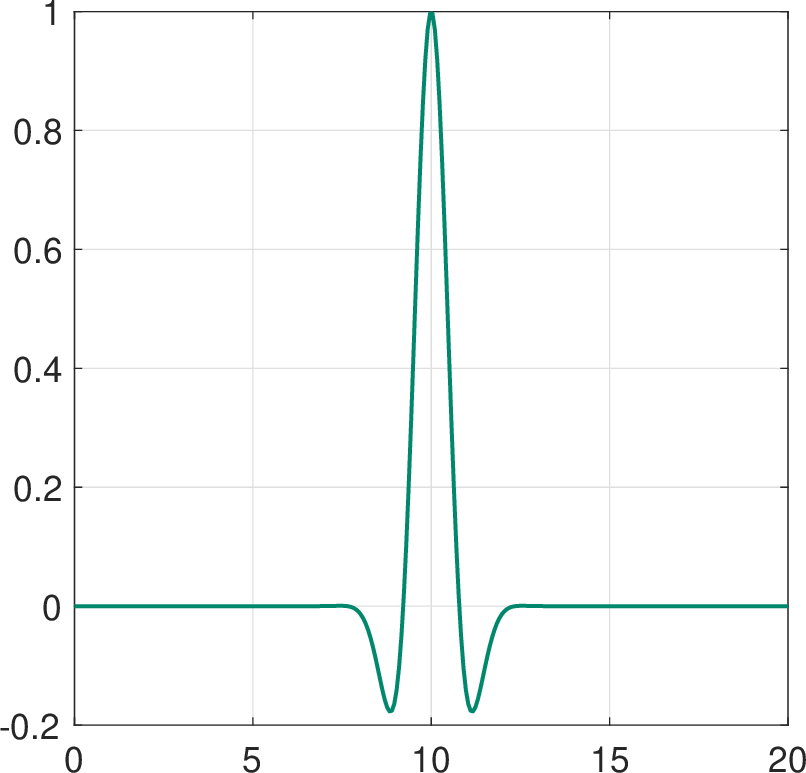}
  \end{subfigure}
  \caption{Exact shape of the scatterers in Examples~\ref{ex:TiltedPeanutInverse} (left) and~\ref{ex:seastarinverse} (middle). Plot of profile $f^{i}$ of incident plane wave as function of time (right).}
  \label{fig:ExactShape}
\end{figure}

In our numerical examples below we consider the two different 
obstacles shown in Figure~\ref{fig:ExactShape}: a tilted 
peanut-shape (left) and a sea star with five lobes (middle).
We note that these exact shapes do not belong to the finite 
dimensional shape space that is used for the reconstruction.
For both obstacles, the density inside the obstacle is chosen 
as~$\rho_0 = 0.1$ and the wave speed inside the obstacle 
as~$c_0 = 0.05$. 
The triangular mesh that we use to discretize the boundaries 
of these obstacles in the numerical solution of the forward 
problem to generate synthetic forward data has $1610$ vertices. 

We consider incident plane waves as in \eqref{eq:PlaneWave} 
given by 
\begin{equation*}
  \ui(x,t;\theta_k)
  \,=\, f^i(t-\theta_k\cdot x) \,, \qquad 
  (x,t)\in \Rd\times[0,T] \,, 
\end{equation*}
for $k=1,\ldots,K$, where
\begin{equation*}
  f^i(t) 
  \,=\, \chi(t-10) \cos(-20+2t) \rme^{-(t-10)^2} \,, 
  \qquad t\in\R \,.
\end{equation*}
Here $\chi$ is a smooth cut-off function satisfying $\chi=1$ 
in $[-4,4]$ and $\chi=0$ in $\R\setminus (-5,5)$, ensuring that 
$f^i$ is compactly supported, in accordance with the assumptions 
stated above.
For the final observation time we choose $T=20$ in both examples,
using $M = 300$ equidistant time steps in~$[0,T]$ in the numerical 
discretizations.
The choice of this incoming wave is similar as in~\cite{Ban10} 
and represents a plane wave modulated by a Gaussian.
A plot of $f^i$ is shown in Figure~\ref{fig:ExactShape}~(right). 
We note that both scattering obstacles are contained 
in a ball~$\BR$ of radius~$R=2$ around the origin. 
Accordingly, the incident waves $\ui(\ph,\ph;\theta_k)$, 
$k=1,\ldots, K$, clearly satisfy the causality 
condition~\eqref{eq:CausalityUi}. 

In our examples we choose three different sets of backscattering 
directions for the reconstruction. 
These are given by
\begin{itemize}
\item[(i)] $\Theta_6 := \{ \pm e_1, \pm e_2, \pm e_3 \}$, i.e., K=6\,, 
\item[(ii)] $\Theta_8 := \bigl\{ a_1 e_1 + a_2 e_2 + a_3 e_3 
  \;\big|\; a_l \in \bigl\{ \pm 1 / \sqrt{3} \bigr\} \,,\; 
  l=1,2,3 \bigr\}$, i.e., K=8\,, 
\item[(iii)] $\Theta_{14} := \Theta_6 \cup \Theta_8$, i.e., K=14\,, 
\end{itemize}
where as usual $e_1$, $e_2$, and $e_3$ denote the standard basis 
vectors in~$\Rd$. 

For the initial guess in the GN iteration we use a ball 
of radius $0.3$ centered at the origin, as shown in 
Figure~\ref{fig:Reconstruction_peanut_shifted}~(top left).
This means $r_0\equiv 0.3$.
The triangular meshes that we use to discretize the boundaries 
of this initial guess $r_0$ and all further iterates $r_n$, 
$n\in\N$, in the GN iteration, when computing the derivative of 
the regularized objective functional~$\Psi_{\alpha_n}$ in each 
iteration step, have~$1026$ vertices. 

The stopping rule for the GN iteration is determined by the 
relative residual
\begin{equation*}
  \mathrm{Res(n)}
  \,:=\, \frac{ \Bigl( \sum_{k=1}^K 
    \| F_{\theta_k}^\infty(r_n) 
    - u^\infty_{\theta_k}(-\theta_k,\ph) \|^2_{L^2(0,T)} 
    \Bigr)^{\frac12} }
  { \Bigl( \sum_{k=1}^K 
    \| F_{\theta_k}^\infty(r_n) \|^2_{L^2(0,T)} \Bigr)^{\frac12} } \,, 
  \qquad n\in\N \,,
\end{equation*}
and by the relative update
\begin{equation*}
  \mathrm{Up(n)}
  \,:=\, \frac{\|r_{n+1}-r_n\|_{L^2(\Sd)}}{\|r_n\|_{L^2(\Sd)}}  \,, 
  \qquad n\in\N \,,
\end{equation*}
in the $n$th step of the iteration. 
We stop the GN iteration, when either the relative 
residual~$\mathrm{Res(n)}$ falls below~$0.01$ or if the 
relative update $\mathrm{Up(n)}$ falls below~$9\cdot 10^{-4}$. 

\begin{example}
  \label{ex:TiltedPeanutInverse}

  \begin{figure}[th]
    \centering

    \begin{subfigure}{0.45\textwidth}
      \centering
      \includegraphics[width=1.\linewidth]{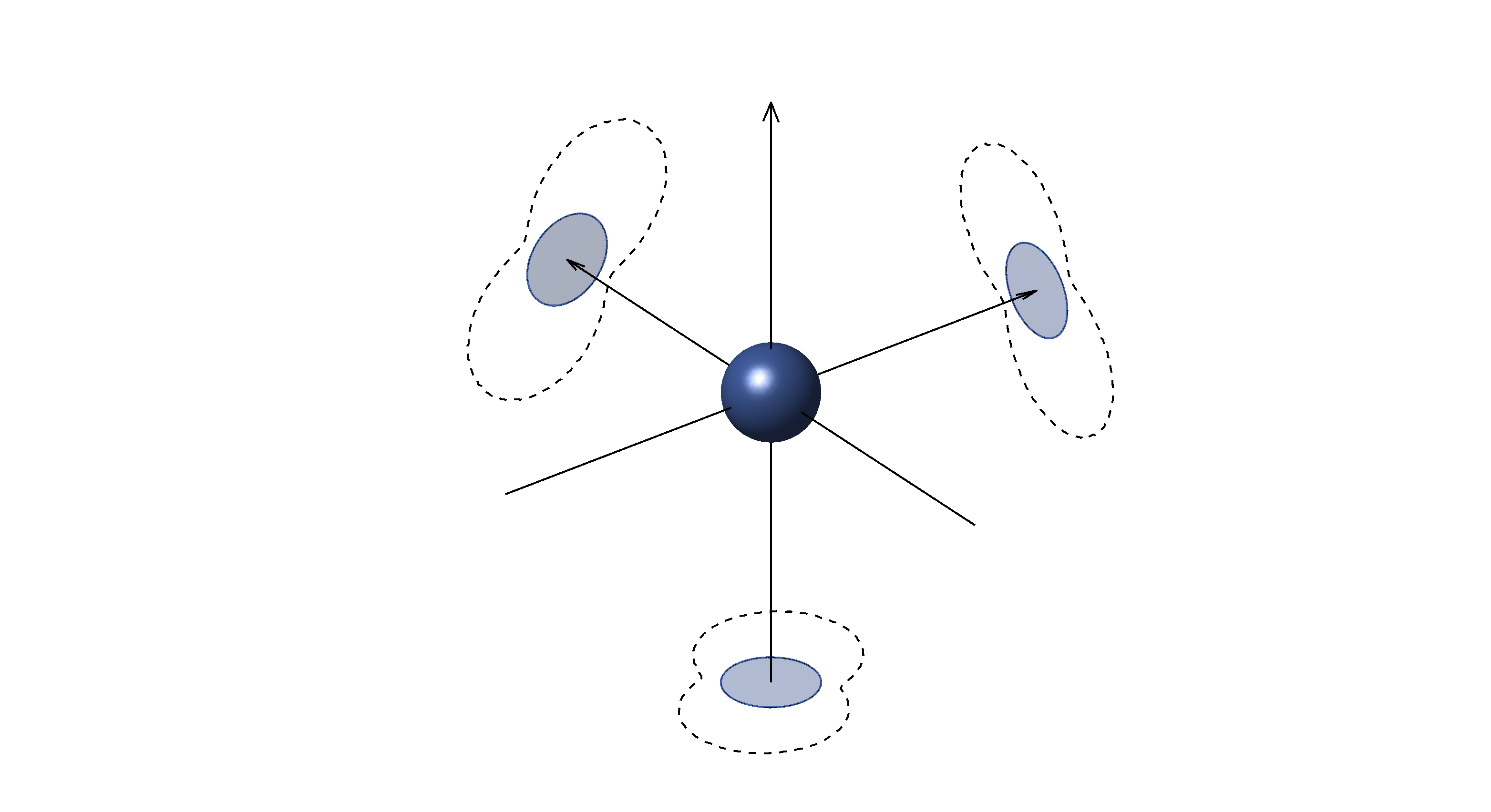}
      \caption{Initial guess}
    \end{subfigure}
    \hfill
    \begin{subfigure}{0.45\textwidth}
      \centering
      \includegraphics[width=1.\linewidth]{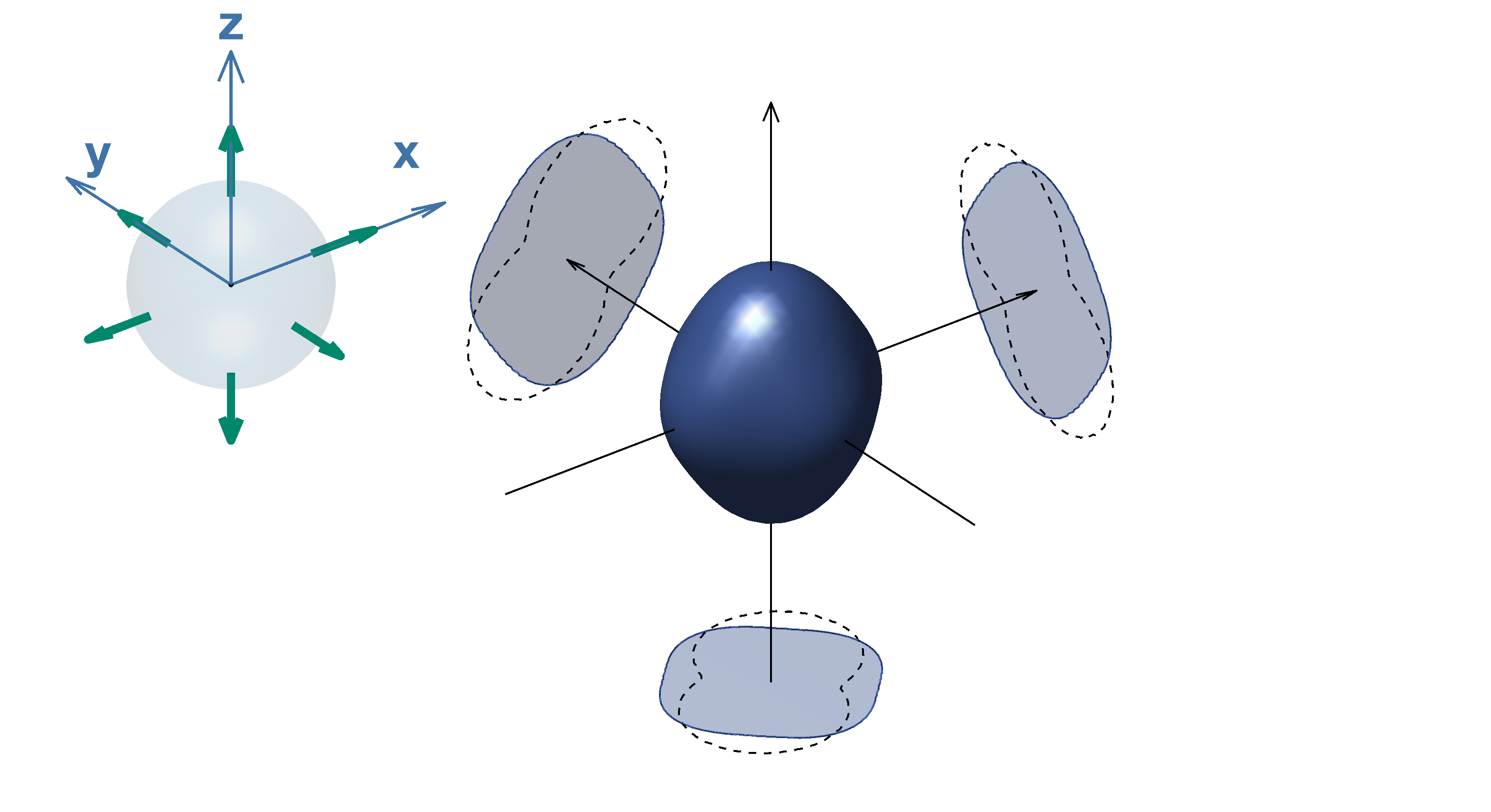}
      \caption{Reconstruction ($6$ backscattering directions)}
    \end{subfigure}

    \vspace{0.5cm}

    \begin{subfigure}{0.45\textwidth}
      \centering
      \includegraphics[width=1.\linewidth]{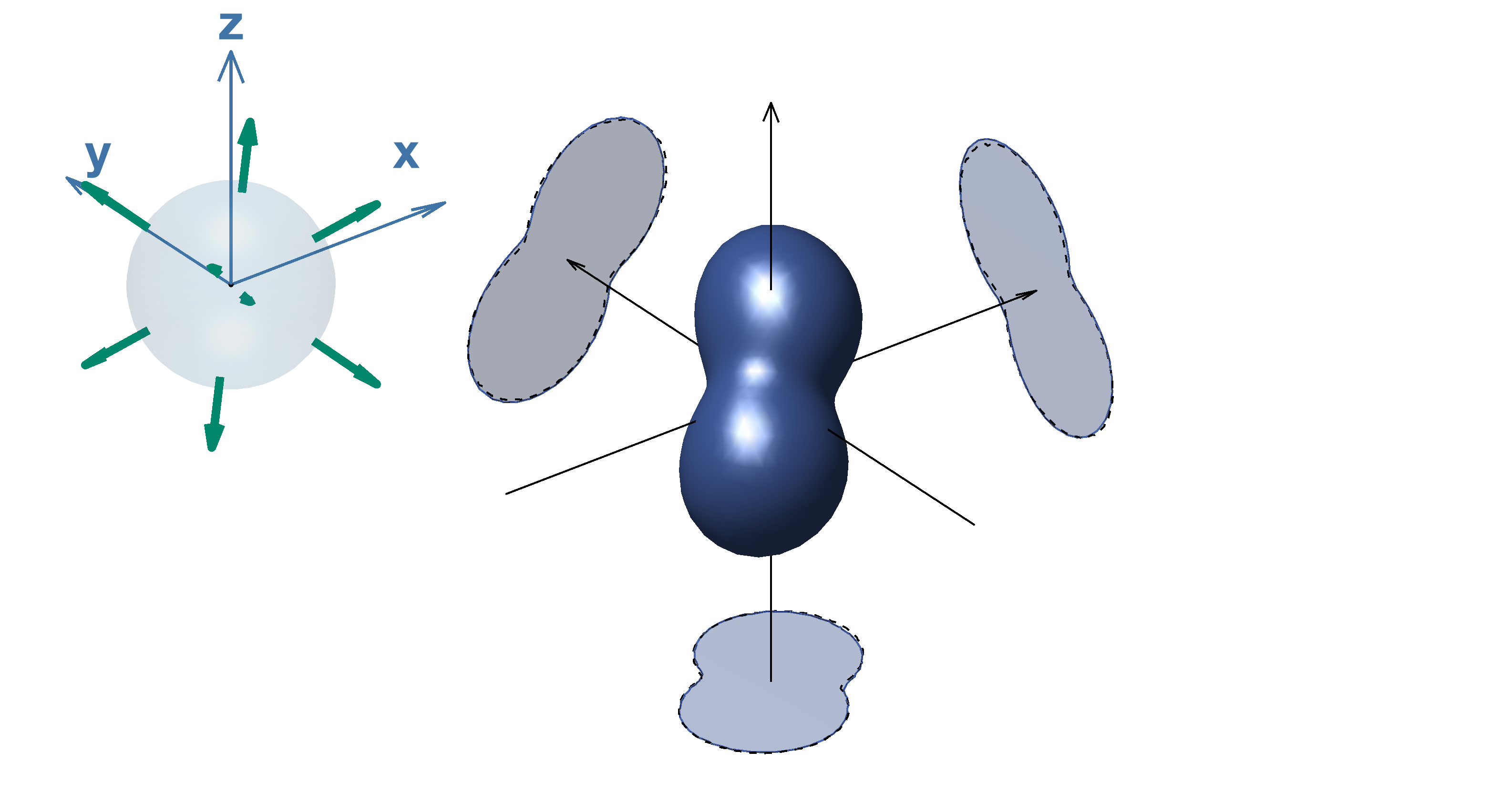}
      \caption{Reconstruction ($8$ backscattering directions)}
    \end{subfigure}
    \hfill
    \begin{subfigure}{0.45\textwidth}
      \centering
      \includegraphics[width=1.\linewidth]{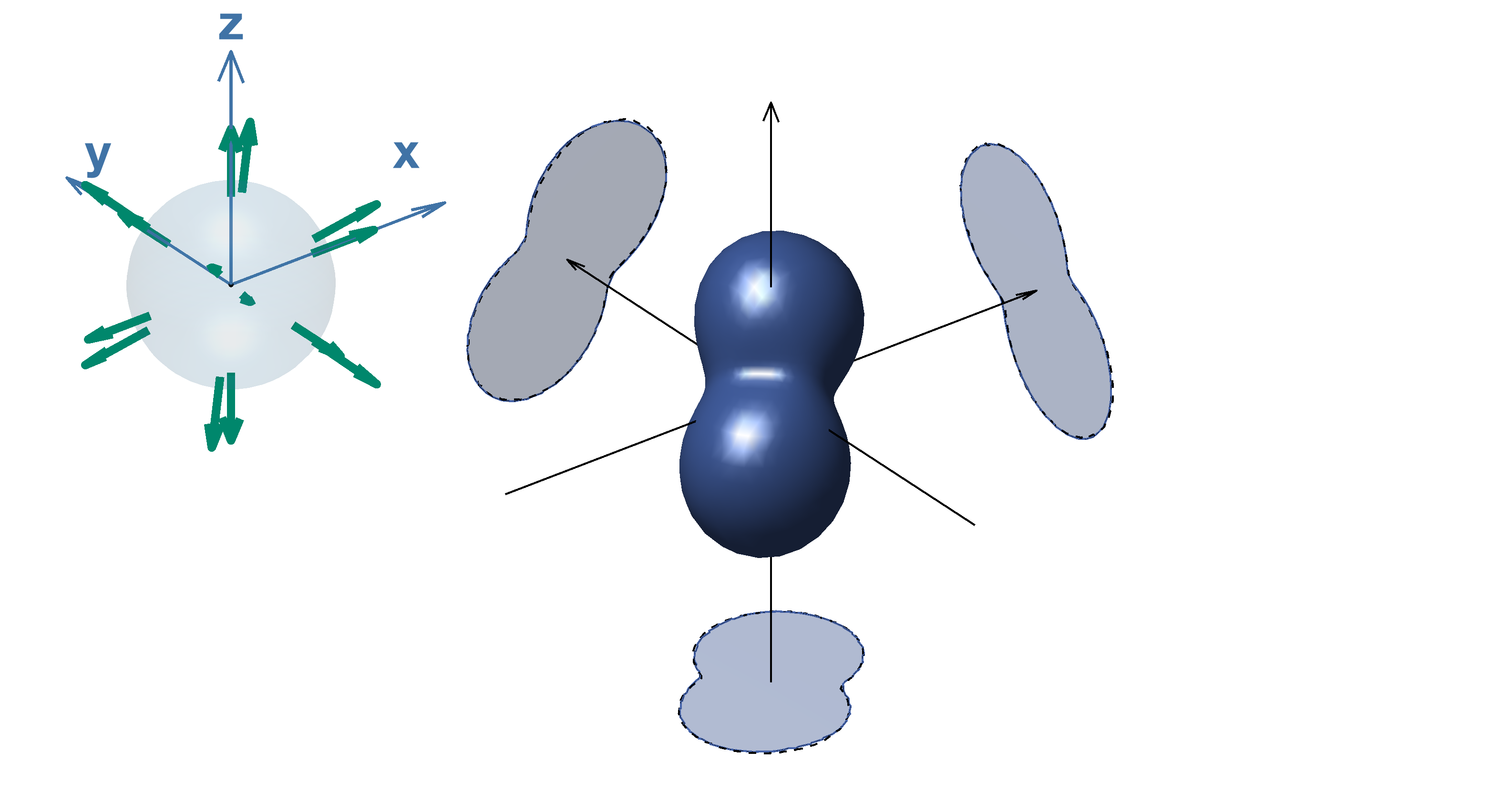}
      \caption{Reconstruction ($14$ backscattering directions)}
    \end{subfigure}  
    \caption{Reconstruction of tilted peanut-shape from backscattering data 
    corresponding to $6$, $8$ and $14$ backscattering directions.
    Spheres and arrows to the left of the plots (b)--(d) indicate 
    incident/backscattering directions.}
    \label{fig:Reconstruction_peanut_shifted}
  \end{figure}

  In our first example we consider the peanut-shaped scattering 
  object as shown in Figure~\ref{fig:ExactShape} (left).
  The initial guess used in the GN iteration is shown in 
  Figure~\ref{fig:Reconstruction_peanut_shifted} (top left). 
  During the GN iteration the regularization parameter is set 
  to $\alpha_n = 5\cdot 10^{-4}$ as long 
  as $\mathrm{Res}(n) \geq 0.1$. 
  As soon as~$\mathrm{Res}(n) < 0.1$,  we choose to reduce the 
  regularization by setting the regularization parameter to 
  $\alpha_n = 10^{-5}$.
  These values were established through trial and error.

  \begin{figure}[th]
    \centering
    \includegraphics[width=1.\linewidth]{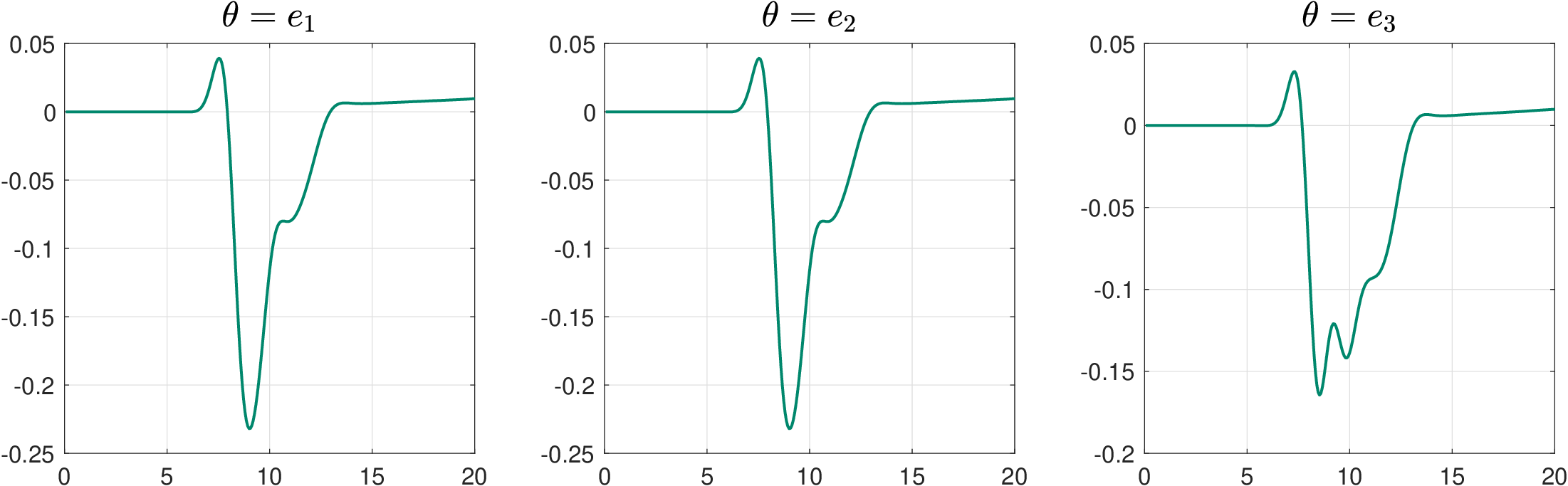}
    \caption{Backscattering data as function of time for $3$ different backscattering 
      directions in Example~\ref{ex:TiltedPeanutInverse}.}
    \label{fig:farField_3dir_peanut}
  \end{figure}
  We start with the $K=6$ backscattering directions in $\Theta_6$ from case~(i).
  Plots of the corresponding backscattering data 
  $\uinfty(-e_1,t;e_1)$, $\uinfty(-e_2,t;e_2)$, and 
  $\uinfty(-e_3,t;e_3)$ for $t\in[0,20]$ are shown 
  in~Figure~\ref{fig:farField_3dir_peanut}. 
  Here the left plot and the plot in the middle are the same due 
  to symmetry properties of the scattering obstacle. 
  The initial relative residual satisfies $\mathrm{Res}(0)=0.78$.
  The GN iteration terminates after $6$ iterations and the corresponding
  relative residual is at $\mathrm{Res}(6)=0.17$. 
  The reconstruction is shown in 
  Figure~\ref{fig:Reconstruction_peanut_shifted}~(top right). 
  In our second test we use the $K=8$ backscattering directions 
  in $\Theta_8$ from case~(ii).
  Here, the initial relative residual lies at $\mathrm{Res}(0)=0.85$.
  The GN iteration stops after $5$ steps, and the corresponding
  relative residual is at $\mathrm{Res}(5)=0.01$. 
  The reconstruction is shown in 
  Figure~\ref{fig:Reconstruction_peanut_shifted}~(bottom left). 
  When using all $K=14$ backscattering directions in $\Theta_{14}$
  from case~(iii), we obtain an initial relative residual 
  of~$\mathrm{Res}(0)=0.82$.
  The GN iteration terminates after $6$ steps, and the corresponding
  relative residual is at $\mathrm{Res}(6)=0.01$. 
  The reconstruction is shown in 
  Figure~\ref{fig:Reconstruction_peanut_shifted}~(bottom right). 

  Each final reconstruction in 
  Figure~\ref{fig:Reconstruction_peanut_shifted} is shown alongside a 
  sphere and direction vectors (top left of each reconstruction) which 
  visualize the backscattering directions $\Theta_6$, $\Theta_8$, and 
  $\Theta_{14}$, respectively, that were used for the reconstruction. 
  Furthermore, the plots include the projections of the reconstructions 
  onto the three coordinate planes. 
  For comparison, the corresponding projections of the exact scatterer 
  are shown as dotted lines.

  For this example, we observe that reconstructions using $8$ and $14$
  backscattering directions yield good results. 
  In contrast, $6$ backscattering directions appear to be insufficient 
  for achieving a satisfactory reconstruction.
  \hfill$\lozenge$
\end{example}

\begin{example}
  \label{ex:seastarinverse}

  \begin{figure}[th]
    \centering
    
    \begin{subfigure}{0.45\textwidth}
      \centering
      \includegraphics[width=1.\linewidth]{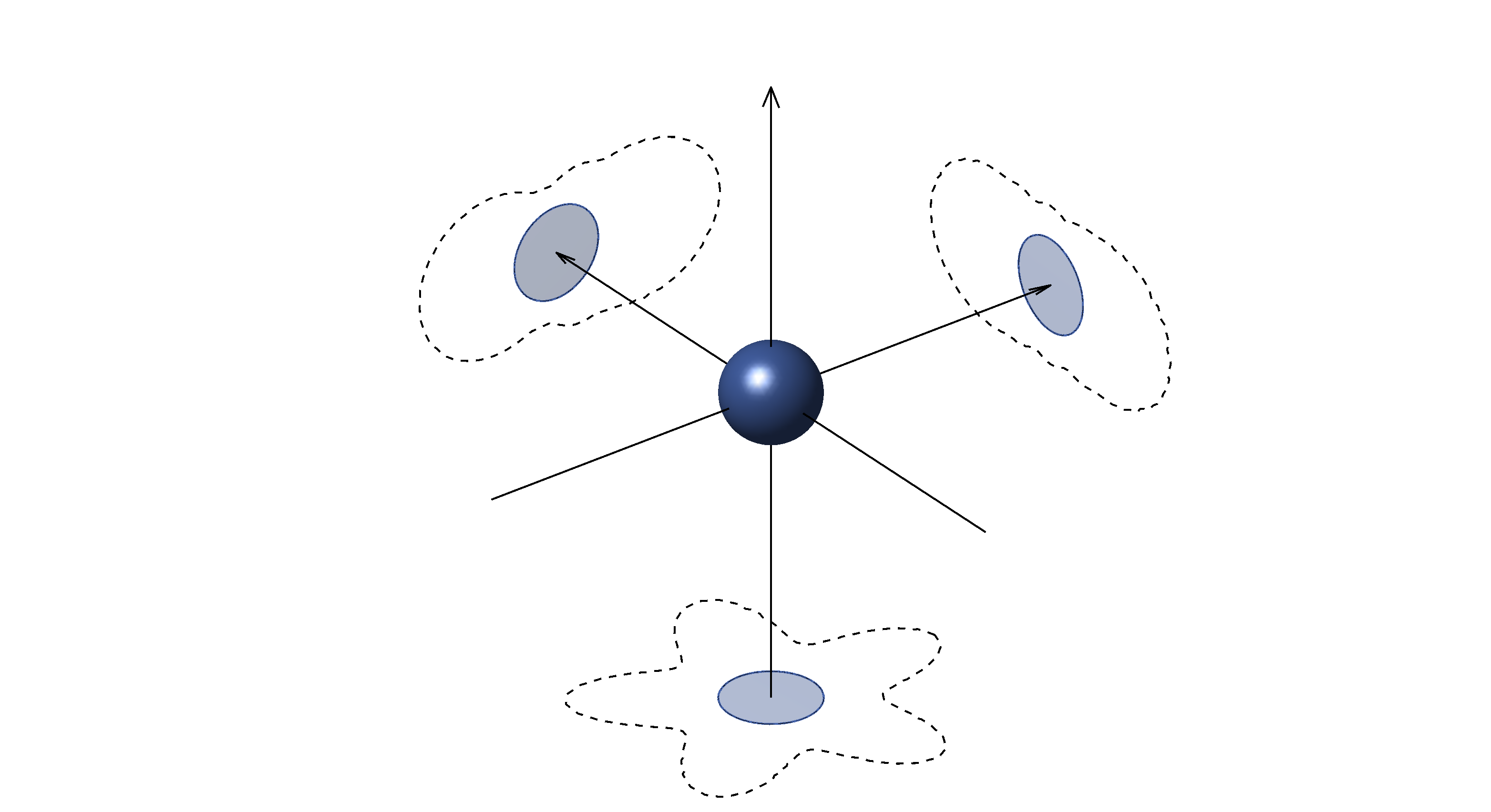}
      \caption{Initial guess}
    \end{subfigure}
    \hfill
    \begin{subfigure}{0.45\textwidth}
      \centering
      \includegraphics[width=1.\linewidth]{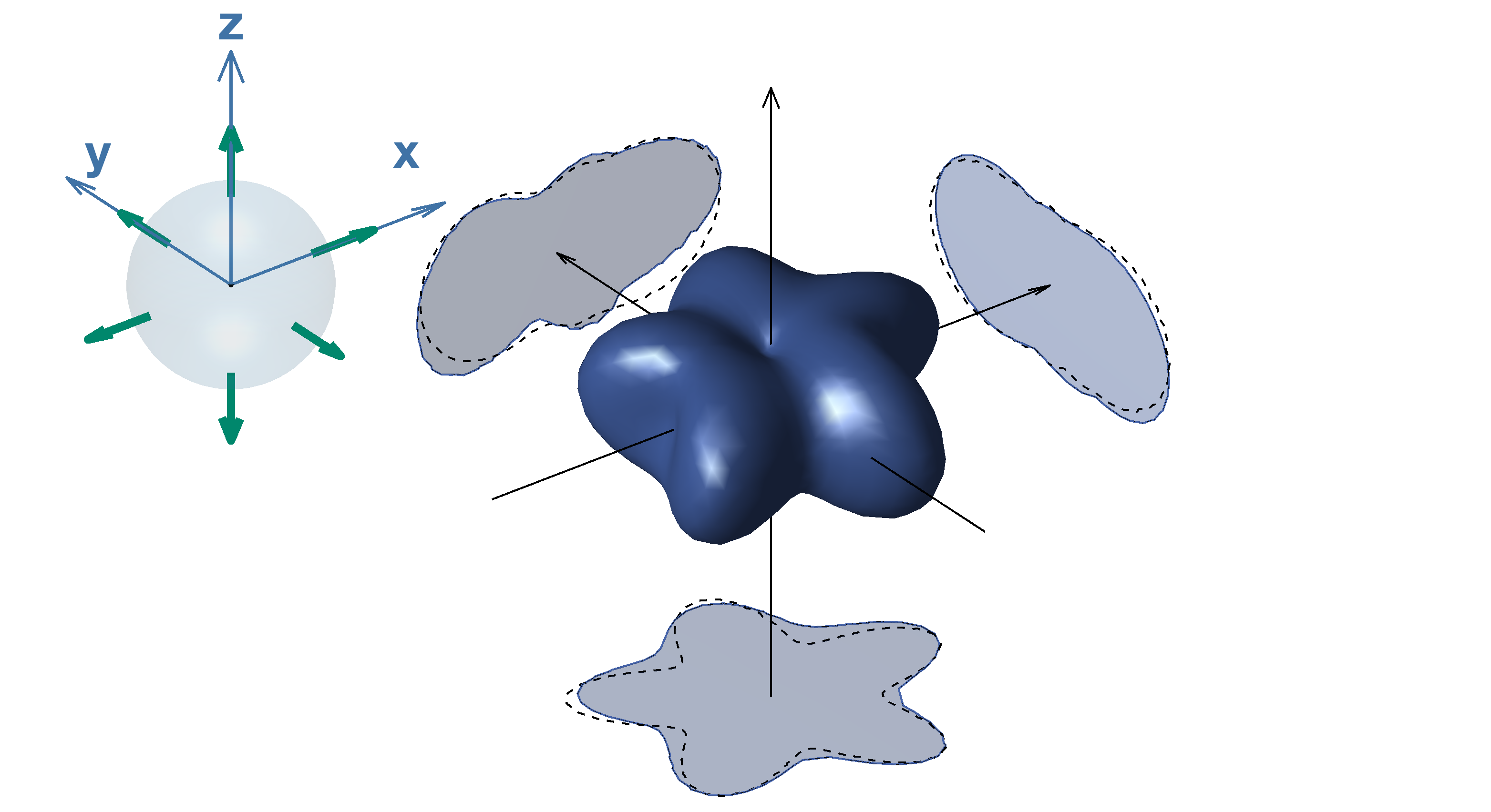}
      \caption{Reconstruction (6 backscattering directions)}
    \end{subfigure}
    
    \vspace{0.5cm}
    
    \begin{subfigure}{0.45\textwidth}
      \centering
      \includegraphics[width=1.\linewidth]{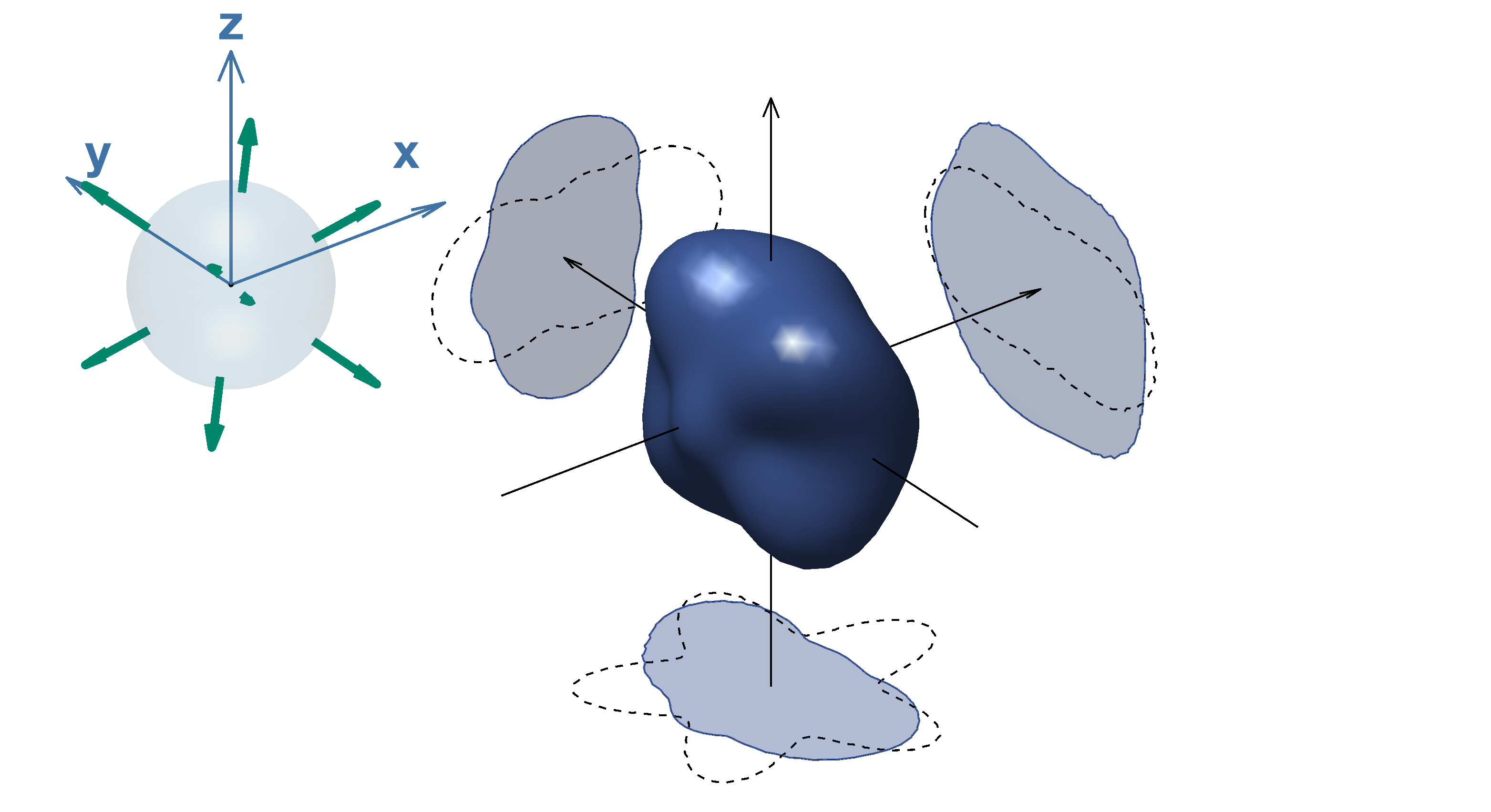}
      \caption{Reconstruction (8 backscattering directions)}
    \end{subfigure}
    \hfill
    \begin{subfigure}{0.45\textwidth}
      \centering
      \includegraphics[width=1.\linewidth]{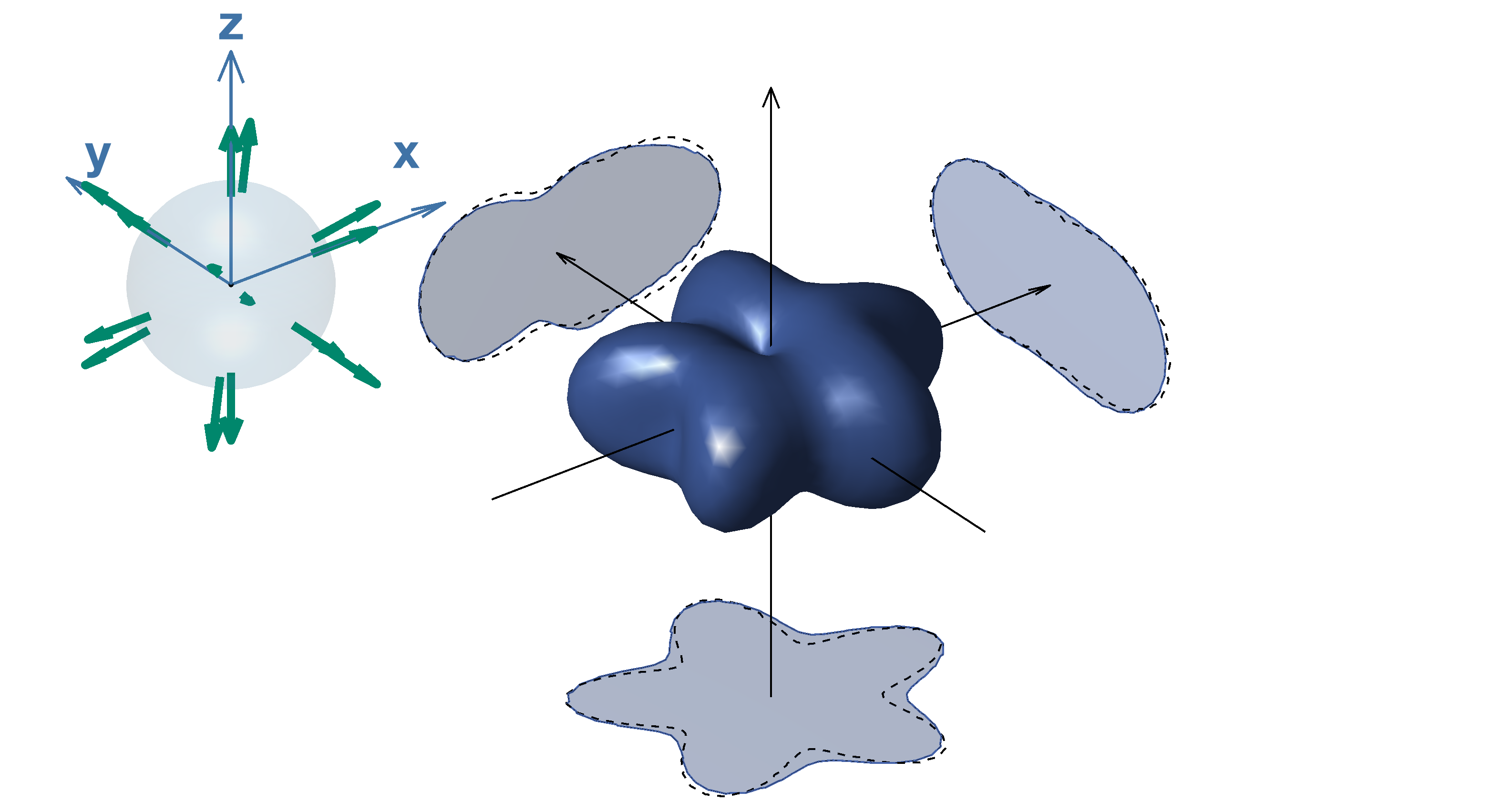}
      \caption{Reconstruction (14 backscattering directions)}
    \end{subfigure}
    
    \caption{Reconstruction of sea star-shape from backscattering data 
      corresponding to $6$, $8$ and $14$ backscattering directions.
      Spheres and arrows to the left of the plots (b)--(d) indicate 
      incident/backscattering directions.}
    \label{fig:Reconstruction_star}
  \end{figure}
  
  In our second example we consider the sea star-shaped scattering 
  object, as shown in Figure~\ref{fig:ExactShape} (right).
  As in the previous Example~\ref{ex:TiltedPeanutInverse}, we use 
  a ball of radius $0.3$, centered at the origin, shown in 
  Figure~\ref{fig:Reconstruction_star} (top left) for the initial 
  guess.
  For relative residuals $\mathrm{Res}(n) \geq 0.1$, the regularization 
  parameter is set to $\alpha_n = 9\cdot 10^{-5}$ in the GN iteration. 
  As soon as~$\mathrm{Res}(n) < 0.1$,  we switch to $\alpha_n = 10^{-5}$.
  Again, these values were established through trial and error.

  For the $K=6$ backscattering directions in $\Theta_6$, the initial relative 
  residual satisfies $\mathrm{Res(0)} = 0.97$.
  The GN iteration terminates after~$15$ steps and the corresponding
  relative residual is at $\mathrm{Res}(15)=0.02$. 
  The reconstruction is shown in 
  Figure~\ref{fig:Reconstruction_star}~(top right). 
  When using the $K=8$ backscattering directions in~$\Theta_8$, the initial 
  relative residual lies at $\mathrm{Res(0)}=0.93$.
  The GN iterations stops after~$15$ steps, and the corresponding relative 
  residual is at~$\mathrm{Res}(15)=0.05$. 
  The reconstruction is shown in 
  Figure~\ref{fig:Reconstruction_star}~(bottom left). 
  When using all $14$ directions in $\Theta_{14}$, we obtain an initial 
  relative residual of $\mathrm{Res}(0)=0.95$. 
  The GN iteration terminates after after $11$ steps, and the corresponding
  relative residual is at $\mathrm{Res}(11)=0.02$. 
  The reconstruction is shown in 
  Figure~\ref{fig:Reconstruction_star}~(bottom right). 

  For this example, only the reconstruction with $14$ backscattering directions 
  produces a satisfactory result. 
  In contrast, using $6$ or $8$ backscattering directions does not provide 
  sufficient information for an accurate reconstruction.
  \hfill$\lozenge$
\end{example}

\begin{example}

  \begin{figure}[th]
    \centering

    \begin{subfigure}{0.32\textwidth}
      \centering
      \includegraphics[width=1.\linewidth]{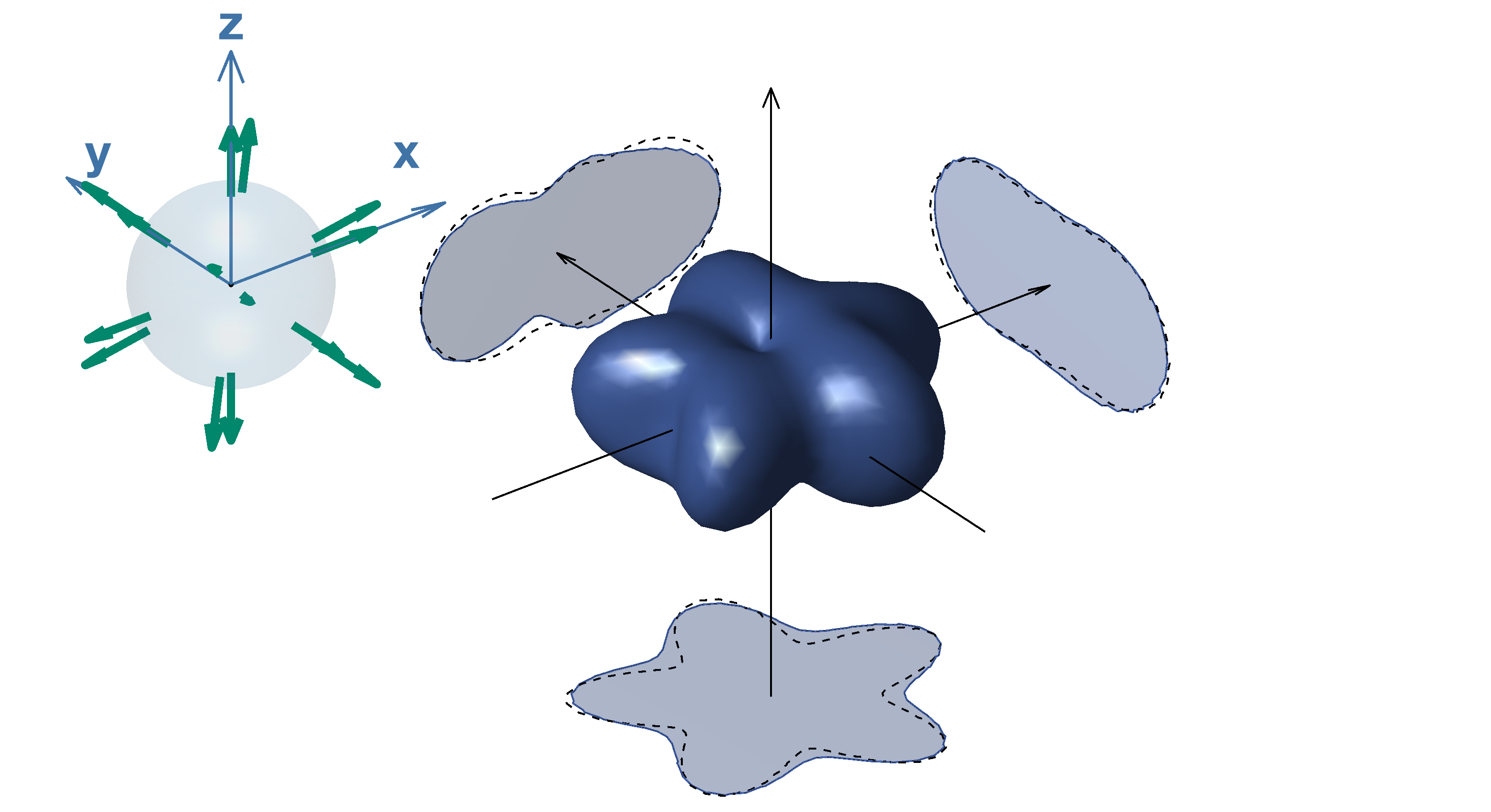}
      \caption{Reconstruction (14 backscattering directions and $5 \%$ noise)}
    \end{subfigure}
    \hfill
    \begin{subfigure}{0.32\textwidth}
      \centering
      \includegraphics[width=1.\linewidth]{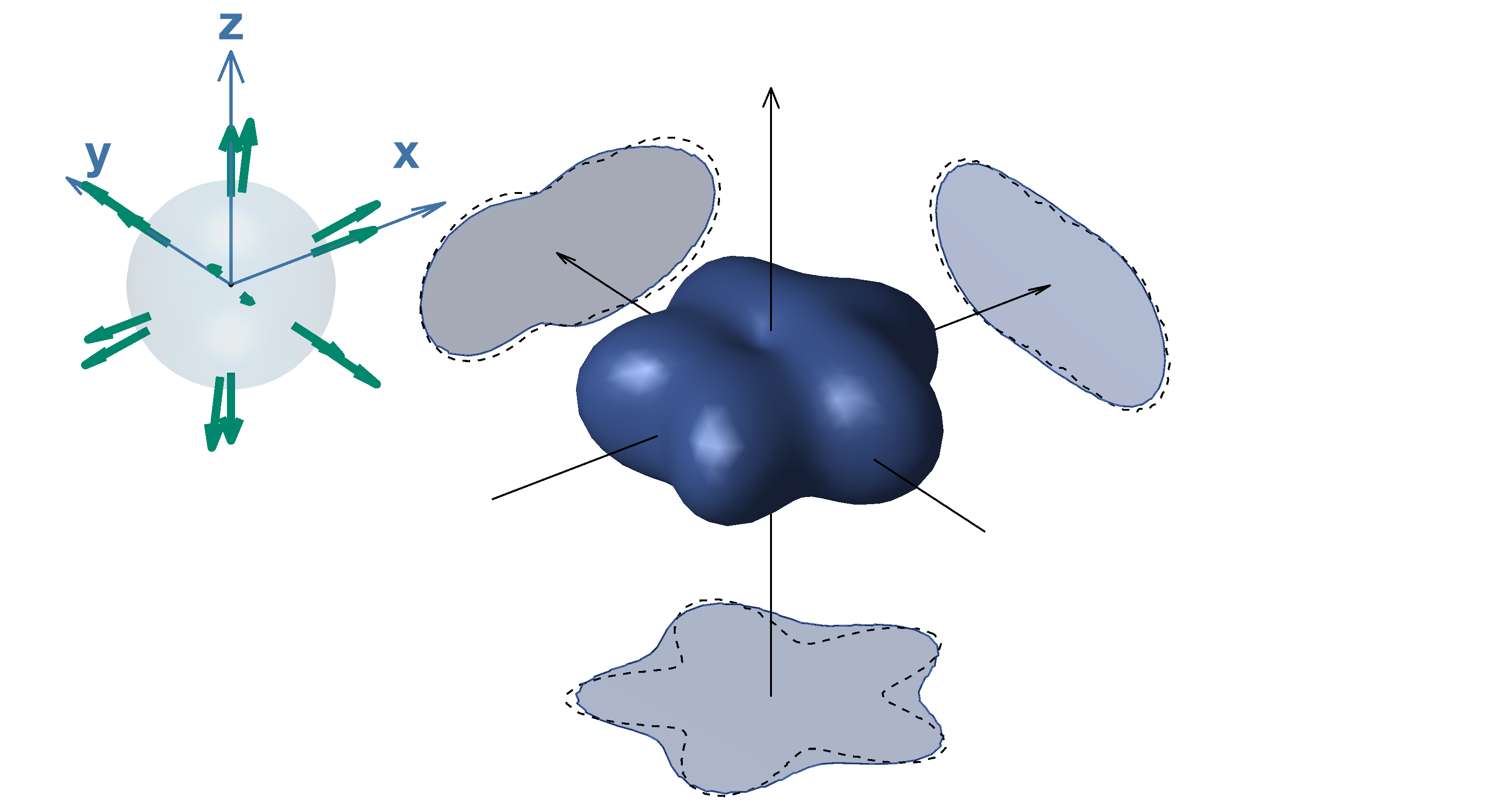}
      \caption{Reconstruction (14 backscattering directions and $15 \%$ noise)}
    \end{subfigure}
    \hfill
    \begin{subfigure}{0.32\textwidth}
      \centering
      \includegraphics[width=1.\linewidth]{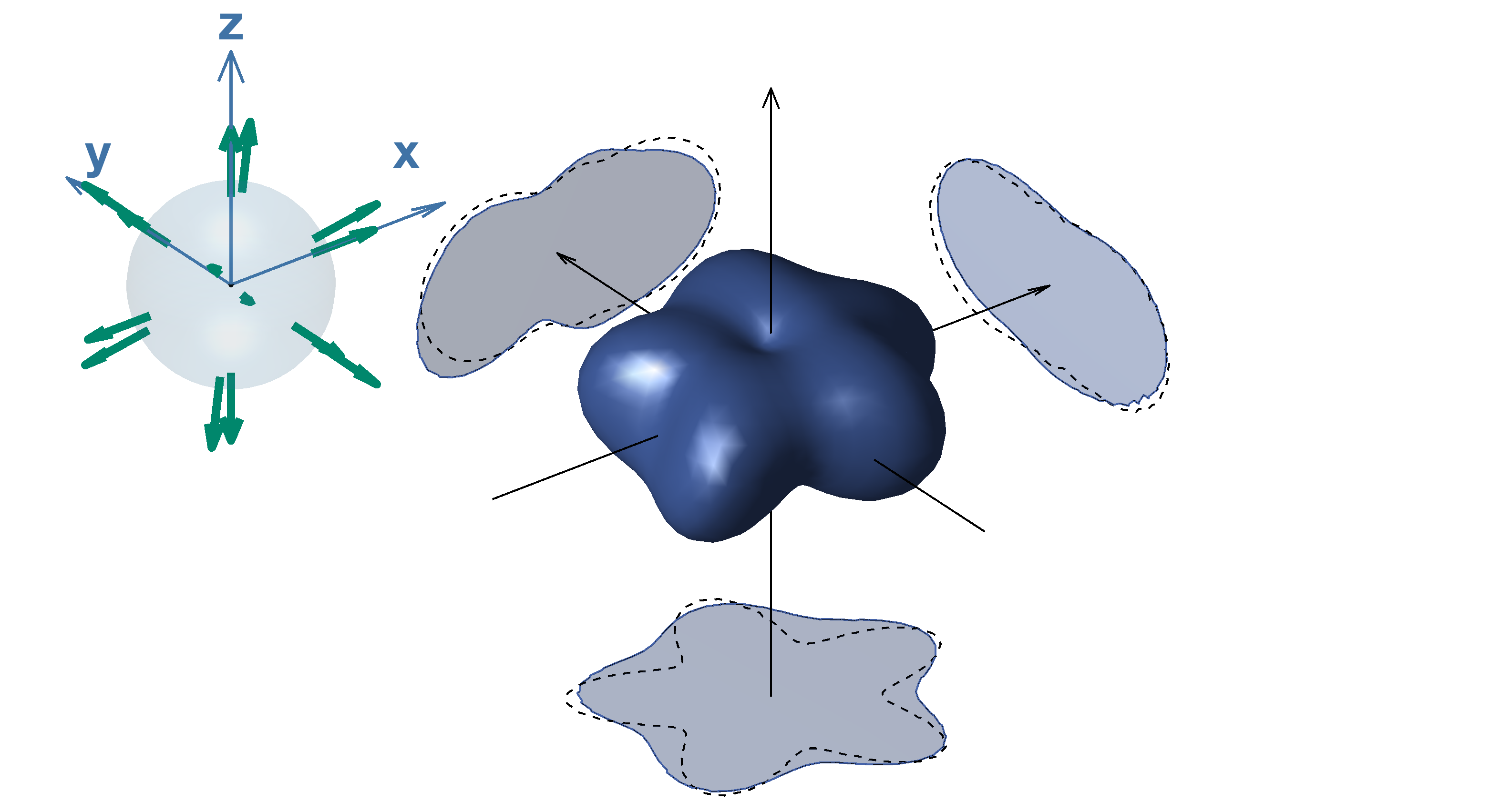}
      \caption{Reconstruction (14 backscattering directions and $30 \%$ noise)}
    \end{subfigure}

    \caption{Reconstruction of sea star-shape from noisy backscattering data 
    with 14 backscattering directions.
    Spheres and arrows to the left of the plots indicate incident/backscattering 
    directions.}
    \label{fig:Reconstruction_star_noisy}
  \end{figure}

  To assess the sensitivity of the reconstruction method to noise in the data, we repeat 
  the previous examples using the $14$ backscattering directions in~$\Theta_{14}$. 
  The backscattering data are corrupted with additive uniformly distributed random noise 
  at levels of $5\%$, $15\%$, and $30\%$, respectively.
  We use the same initial guess and the same regularization parameters as in Examples~\ref{ex:seastarinverse}. 

  For a noise-level of $5\%$, we obtain an initial relative residual of 
  $\mathrm{Res(0)}=0.95\%$.
  The GN iteration terminates after $11$ steps and the corresponding relative 
  residual is at $\mathrm{Res}(11)=0.06$.
  The reconstruction is shown in Figures~\ref{fig:Reconstruction_star_noisy} (left). 
  When setting the noise-level to $15\%$, the initial relative residual lies at 
  $\mathrm{Res}(0)=0.95$.
  The GN iteration terminates after $9$ steps and the corresponding relative 
  residual is $\mathrm{Res}(9)=0.16$.
  The reconstruction is shown in Figures~\ref{fig:Reconstruction_star_noisy} (middle). 
  After increasing the noise-level to $30\%$, we obtain an initial relative 
  residual of $\mathrm{Res}(0)=0.96$.
  The GN iteration stops after $9$ steps and the corresponding relative residual is 
  at $\mathrm{Res}(9)=0.29$.
  The reconstruction is shown in Figures~\ref{fig:Reconstruction_star_noisy} (right). 

  For data contaminated with $5\%$ noise, the reconstruction is almost as accurate 
  as in the case of exact data. 
  Even for noise levels of $15\%$ and $30\%$, the reconstructions remain satisfactory. 
  These results indicate that the proposed reconstruction method performs reasonably 
  well in the presence of noise.~\hfill$\lozenge$
\end{example}

\begin{appendix}
  \section*{Appendix}
  %%%%%%%%%%%%%%%%%%%%%%%%%%%%%%%%%%%%%%%%%%%%%%%%%%%%%%%%%%%%%%%%%%%%%%%% 
  \section{Estimates for the modified Calder\'on operator $\Ccal(s)$}
  \label{app:A}
  %%%%%%%%%%%%%%%%%%%%%%%%%%%%%%%%%%%%%%%%%%%%%%%%%%%%%%%%%%%%%%%%%%%%%%%% 
  In this section we show that the operator $\Ccal(s))$ from
  \eqref{eq:DefOpCcal} is boundedly invertible, and we
  establish~\eqref{eq:EstInverseOpCcal}.  
  Since 
  \begin{equation*}
    \Ccal(s)
    \,=\, \frac{1}{c_0\rho_0} \begin{bmatrix}
      c_0\rho_0 & 0 \\ 0 & 1
    \end{bmatrix}
    \Bcal(s_0)
    \begin{bmatrix}
      c_0\rho_0 & 0 \\ 0 & 1
    \end{bmatrix}
    + \Bcal(s) \,,
  \end{equation*}
  where
  \begin{equation*}
    \Bcal(s) 
    \,:=\, \begin{bmatrix}
        s\SLop(s) 
        & \DLop(s) \\
        - \adjDLop(s) 
        & \frac{1}{s}\hypSingop(s) 
      \end{bmatrix}
  \end{equation*}
  denotes the Calder\'on operator (see~\cite[p.~90]{BanSay22}), we can
  apply \cite[Lmm.~4.8]{BanSay22} to obtain the following result. 

  \begin{lemma}
    Let $s\in\C_+$, then there exists $C>0$ such that
    \begin{equation}
      \label{eq:EstCoercivityOpCcal}
      \real \Bigl\langle \Ccal(s) \begin{bmatrix}
        \varphi \\ \psi
      \end{bmatrix} , \begin{bmatrix}
        \varphi \\ \psi
      \end{bmatrix} \Bigr\rangle_{\Xcal',\Xcal}
      \,\geq\, C \min(1,|s|^2) \frac{\real(s)}{|s|^2}
      \bigl(\|\varphi\|_{\HminhalfdiD}^2 + \|\psi\|_{\HhalfdiD}^2\bigr) 
    \end{equation}
    for all $(\varphi,\psi)\in \Xcal$. 
    In particular, the operator $\Ccal(s):\Xcal\to \Xcal'$ is coercive. 
  \end{lemma}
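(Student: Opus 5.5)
The plan is to use the decomposition of $\Ccal(s)$ stated just before the lemma. Write $P:=\mathrm{diag}(c_0\rho_0,1)$, so that
\begin{equation*}
  \Ccal(s) \,=\, \frac{1}{c_0\rho_0}\, P\,\Bcal(s_0)\,P + \Bcal(s) \,.
\end{equation*}
The anti-dual pairing $\langle\cdot,\cdot\rangle_{\Xcal',\Xcal}$ is additive in the first slot. The matrix $P$ is real, diagonal, and acts componentwise by scalars. Hence
\begin{equation*}
  \Bigl\langle P\Bcal(s_0)P \begin{bmatrix}\varphi\\ \psi\end{bmatrix}, \begin{bmatrix}\varphi\\ \psi\end{bmatrix}\Bigr\rangle_{\Xcal',\Xcal}
  \,=\, \Bigl\langle \Bcal(s_0) \begin{bmatrix}c_0\rho_0\varphi\\ \psi\end{bmatrix}, \begin{bmatrix}c_0\rho_0\varphi\\ \psi\end{bmatrix}\Bigr\rangle_{\Xcal',\Xcal} \,.
\end{equation*}
This identity only needs checking against the definition of the pairing, which conjugates and swaps entries. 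Because $c_0\rho_0$ is real and positive, the scaling passes through both components unchanged.

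Next, invoke \cite[Lmm.~4.8]{BanSay22}. It states that for every $s\in\C_+$,
\begin{equation*}
  \real\langle \Bcal(s)(\varphi,\psi),(\varphi,\psi)\rangle \,\geq\, C\min(1,|s|^2)\,\frac{\real(s)}{|s|^2}\,\|(\varphi,\psi)\|_\Xcal^2 \,.
\end{equation*}
I would first confirm that the sign conventions of $\Bcal$ and of the pairing agree with \cite{BanSay22}, namely the $-\adjDLop$ in the $(2,1)$ entry and the order $(\tfrac1s\di_\nu,-\gamma)$. Then apply the lemma twice.

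The first application is to $\Bcal(s)$ at $(\varphi,\psi)$. Its real part is bounded below by $C\min(1,|s|^2)\real(s)|s|^{-2}\|(\varphi,\psi)\|_\Xcal^2$.

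The second application is to $\Bcal(s_0)$ at $(c_0\rho_0\varphi,\psi)$, with $s_0=s/c_0\in\C_+$. Its real part is nonnegative, and that is all that is needed. One could also keep this term, since $\real(s_0)/|s_0|^2=c_0\real(s)/|s|^2$, but it is unnecessary.

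Adding the two contributions gives \eqref{eq:EstCoercivityOpCcal}, with the constant from \cite[Lmm.~4.8]{BanSay22} for $\Bcal(s)$. For each fixed $s\in\C_+$ the right-hand side is a positive multiple of $\|(\varphi,\psi)\|_\Xcal^2$, so $\Ccal(s)$ is coercive, and Lax--Milgram gives its bounded invertibility.

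The main obstacle, modest as it is, is the bookkeeping in the first step. Two things must be verified: that the factor $\frac{1}{c_0\rho_0}$ together with $P$ reproduces exactly the entries $\rho_0 s\SLop(s_0)$, $\DLop(s_0)$, $-\adjDLop(s_0)$ and $\frac{1}{\rho_0 s}\hypSingop(s_0)$; and that the positivity from \cite{BanSay22} is stated for the same pairing as $\langle\cdot,\cdot\rangle_{\Xcal',\Xcal}$ here. Checking the entries is direct. For example, the $(1,1)$ entry is $\frac{1}{c_0\rho_0}(c_0\rho_0)^2 s_0\SLop(s_0)=\rho_0 s\SLop(s_0)$, and the $(2,2)$ entry is $\frac{1}{c_0\rho_0}\frac{1}{s_0}\hypSingop(s_0)=\frac{1}{\rho_0 s}\hypSingop(s_0)$.
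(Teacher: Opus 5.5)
Your proposal is correct and follows the paper's argument: the paper uses the same splitting $\Ccal(s)=\frac{1}{c_0\rho_0}\,\mathrm{diag}(c_0\rho_0,1)\,\Bcal(s_0)\,\mathrm{diag}(c_0\rho_0,1)+\Bcal(s)$ and invokes \cite[Lmm.~4.8]{BanSay22}, and your pairing identity and your observation that the $\Bcal(s_0)$-term has nonnegative real part make explicit what the paper leaves implicit. In your planned sign check, note that $\Bcal(s)$ matches the Calder\'on operator of \cite{BanSay22}, and \eqref{eq:BoundaryIntegralEqn_ScatteringProblem} holds, only when $\hypSingop(s)$ is read as the positive hypersingular operator $-\di_\nu\DL(s)$, i.e., with the opposite sign to its displayed definition in Section~\ref{sec:ScatteringProblemLD}; this is a typo in that definition, not a flaw in your argument.
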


  Accordingly, the operator $\Ccal(s)$ is invertible for all
  $s\in\C_+$, and a bound for the norm of its inverse can be obtained
  similarly to~\cite[Lmm.~7.2]{BanSay22}. 

  \begin{lemma}
    There exists $C_\sigma>0$ such that
    \begin{equation}
      \label{eq:EstOpnormOpCcal}
      \|\Ccal^{-1}(s)\|_{\Xcal\leftarrow \Xcal'} 
      \,\leq\, C_\sigma \frac{|s|^2}{\real(s)} \,,
      \qquad \real(s)\geq\sigma>0 \,.
    \end{equation}
  \end{lemma}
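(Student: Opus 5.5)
The bound \eqref{eq:EstOpnormOpCcal} will follow from the coercivity estimate \eqref{eq:EstCoercivityOpCcal} combined with a Lax--Milgram argument, in the spirit of \cite[Lmm.~7.2]{BanSay22}. Fix $s$ with $\real(s)\geq\sigma>0$ and a right-hand side $g\in\Xcal'$. Coercivity shows that $\Ccal(s)$ is injective with closed range. Since the adjoint of $\Ccal(s)$ satisfies the same type of lower bound, $\Ccal(s)$ is also surjective. Hence $\Ccal^{-1}(s):\Xcal'\to\Xcal$ exists and is bounded. It remains to quantify its norm.

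Set $x=(\varphi,\psi):=\Ccal^{-1}(s)g$ and test with $x$. By \eqref{eq:EstCoercivityOpCcal},
\begin{equation*}
  C\min(1,|s|^2)\frac{\real(s)}{|s|^2}\,\|x\|_\Xcal^2
  \,\leq\, \real\langle \Ccal(s)x,x\rangle_{\Xcal',\Xcal}
  \,\leq\, |\langle g,x\rangle_{\Xcal',\Xcal}|
  \,\leq\, \|g\|_{\Xcal'}\|x\|_\Xcal \,,
\end{equation*}
where the last step uses the continuity of the anti-dual pairing. Dividing by $\|x\|_\Xcal$ gives
\begin{equation*}
  \|\Ccal^{-1}(s)\|_{\Xcal\leftarrow\Xcal'}
  \,\leq\, C^{-1}\frac{|s|^2}{\real(s)\min(1,|s|^2)} \,.
\end{equation*}

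Next we remove the factor $\min(1,|s|^2)$. If $|s|\geq1$, this factor equals $1$ and we are done. If $|s|<1$, then $|s|\geq\real(s)\geq\sigma$, so $\min(1,|s|^2)\geq\min(1,\sigma^2)$. In both cases
\begin{equation*}
  \min(1,|s|^2)\,\geq\,\min(1,\sigma^2) \,,
\end{equation*}
which yields \eqref{eq:EstOpnormOpCcal} with $C_\sigma:=C^{-1}\max(1,\sigma^{-2})$.

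The main point to verify is that the constant $C$ in \eqref{eq:EstCoercivityOpCcal} is independent of $s$. The lemma only says "there exists $C>0$", but its proof via \cite[Lmm.~4.8]{BanSay22} gives uniformity. That lemma bounds each Calder\'on block $\Bcal(s)$ and $\Bcal(s_0)$ from below with a constant depending only on $\di D$. The scalings by $c_0\rho_0$ and $s_0=s/c_0$ only introduce constants depending on $c_0$ and $\rho_0$, so $C$ is uniform in $s\in\C_+$ and everything above holds uniformly.
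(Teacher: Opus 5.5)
Your proof is correct and follows the paper's argument. Like the paper, you set $x=\Ccal^{-1}(s)g$, apply the coercivity estimate \eqref{eq:EstCoercivityOpCcal}, bound the pairing by $\|g\|_{\Xcal'}\|x\|_{\Xcal}$, and divide by $\|x\|_\Xcal$. You also make explicit two points the paper leaves implicit: the absorption of $\min(1,|s|^2)\geq\min(1,\sigma^2)$ into $C_\sigma$, and the requirement that the constant $C$ in \eqref{eq:EstCoercivityOpCcal} be independent of $s$; you handle both correctly.
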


  \begin{proof}
    Let $(g, h)\in \Xcal'$. 
    Since $\Ccal(S)$ is invertible, we can define
    \begin{equation*}
      \begin{bmatrix}
        \varphi \\ \psi
      \end{bmatrix}
      \,:=\, \Ccal^{-1}(s) \begin{bmatrix}
        g \\ h
      \end{bmatrix} \,.
    \end{equation*}
    Using \eqref{eq:EstCoercivityOpCcal}, we obtain
    \begin{equation*}
      \begin{split}
        C &\min(1,|s|^2) \frac{\real(s)}{|s|^2}
        \Bigl(\|\varphi\|_{\HminhalfdiD}^2 + \|\psi\|_{\HhalfdiD}^2\Bigr) 
        \,\leq\, \biggl| \Bigl\langle \Ccal(s) \begin{bmatrix}
          \varphi \\ \psi
        \end{bmatrix} , \begin{bmatrix}
          \varphi \\ \psi
        \end{bmatrix} \Bigr\rangle_{\Xcal',\Xcal} \biggr| \\
        &\,=\, \biggl| \Bigl\langle \begin{bmatrix}
          g \\ h
        \end{bmatrix} , 
        \begin{bmatrix}
          \varphi \\ \psi
        \end{bmatrix} \Bigr\rangle_{\Xcal',\Xcal} \biggr|
        \,\leq\, \Bigl(\|h\|_{\HminhalfdiD}^2
        + \|g\|_{\HhalfdiD}^2\Bigr)^{\frac12}  
        \Bigl(\|\varphi\|_{\HminhalfdiD}^2
        + \|\psi\|_{\HhalfdiD}^2\Bigr)^{\frac12} \,.
      \end{split}
    \end{equation*}
    This shows \eqref{eq:EstOpnormOpCcal}. 
  \end{proof}

\end{appendix}

%%%%%%%%%%%%%%%%%%%%%%%%%%%%%%%%%%%%%%%%%%%%%%%%%%%%%%%%%%%%%%%%%%%%%% 
\section*{Acknowledgments}
%%%%%%%%%%%%%%%%%%%%%%%%%%%%%%%%%%%%%%%%%%%%%%%%%%%%%%%%%%%%%%%%%%%%%% 
This research was funded by the Deutsche Forschungsgemeinschaft (DFG, 
German Research Foundation) -- Pro\-ject-ID 258734477 -- SFB 1173.
The work of M.K.\ was also supported by the Research Council of Finland 
(Flagship of Advanced Mathematics for Sensing, Imaging and Modelling 
grant~359182).

%%%%%%%%%%%%%%%%%%%%%%%%%%%%%%%%%%%%%%%%%%%%%%%%%%%%%%%%%%%%%%%%%%%%%% 
{\small
  \bibliographystyle{abbrvurl}
  \bibliography{temporal_domain_derivative}
}

\end{document}